\newcommand\numberthis{\addtocounter{equation}{1}\tag{\theequation}} 
\newcommand{\ts}{\textsuperscript} 
\newcommand{\Ebb}{\mathbb{E}} 
\newcommand{\Rbb}{\mathbb{R}} 
\newcommand{\Ecal}{\mathcal{E}} 
\newcommand{\Gcal}{\mathcal{G}} 
\newcommand{\Lcal}{\mathcal{L}} 
\newcommand{\Ncal}{\mathcal{N}} 
\newcommand{\Scal}{\mathcal{S}} 
\newcommand{\Ucal}{\mathcal{U}} 
\newcommand{\Vcal}{\mathcal{V}} 
\newcommand{\ones}{\mathbf{1}} 
\newcommand{\zeros}{\mathbf{0}} 
\newcommand{\Exp}[1]{\Ebb{\left[#1\right]}} 
\newcommand{\diag}{\text{diag}}
\newcommand{\range}{\text{range}}
\newcommand\norm[1]{\left\lVert#1\right\rVert} 
\newcommand\normtwo[1]{\left\lVert#1\right\rVert_2} 
\newcommand\abs[1]{\left|#1\right|} 
\newcommand\paren[1]{\left(#1\right)} 
\newcommand\Sbraces[1]{\left[#1\right]} 
\newcommand\Cbraces[1]{\left\lbrace#1\right\rbrace} 
\newcommand\Abraces[1]{\left\langle#1\right\rangle} 
\DeclareMathOperator*{\argmax}{argmax} 
\DeclareMathOperator*{\maximize}{maximize}
\renewcommand{\st}{\text{subject to}}
\newcommand{\ram}{\rightarrow} 
\newcommand{\algindent}{\hspace{\algorithmicindent}} 
\newcommand{\MYSTATE}{\vspace{0.2em}\STATE} 
\newtheoremstyle{mytheoremstyle} 
    {\topsep}                    
    {\topsep}                    
    {\normalfont}                   
    {}                           
    {\bfseries}                   
    {.}                          
    {.5em}                       
    {}              
\theoremstyle{mytheoremstyle}
\newtheorem{theorem}{Theorem}
\newtheorem{definition}{Definition}
\newtheorem{lemma}{Lemma}
\newtheorem{corollary}[theorem]{Corollary}
\newtheorem{assumption}{Assumption}
\newtheorem{remark}{Remark}
\newtheorem{fact}{Fact}
\renewcommand{\AA}{{A^+ A}} 
\newcommand{\LL}{{\Lambda^+ \Lambda}} 
\newcommand\normAA[1]{\left\lVert#1\right\rVert_A} 
\newcommand{\SM}{{\text{SM}}} 
\newcommand\normSM[1]{\left\lVert#1\right\rVert_{\SM}} 
\newcommand{\SMNO}{{\text{SMNO}}} 
\newcommand\normSMNO[1]{\left\lVert#1\right\rVert_{\SMNO}} 
\newcommand{\sign}{\text{sign}}
\newcommand{\SML}{{\text{SML}}} 
\newcommand\normSML[1]{\left\lVert#1\right\rVert_{\SML}}
\begin{document}

\title{Setwise Coordinate Descent for Dual Asynchronous Decentralized Optimization\footnote{This article has been accepted for publication in IEEE Transactions on Automatic Control, DOI 10.1109/TAC.2025.3543463. \\
© 2025 IEEE.  Personal use of this material is permitted.  Permission from IEEE must be obtained for all other uses, in any current or future media, including reprinting/republishing this material for advertising or promotional purposes, creating new collective works, for resale or redistribution to servers or lists, or reuse of any copyrighted component of this work in other~works.}
}

\author{Marina Costantini$^{1,\dagger}$ \and Nikolaos Liakopoulos$^2$ \and Panayotis Mertikopoulos$^3$ \and Thrasyvoulos Spyropoulos$^{1,4}$}
\date{\normalsize{
    $^1$EURECOM, Sophia Antipolis\\%
    $^2$Amazon, Luxembourg\\
    $^3$Univ. Grenoble Alpes, CNRS, Inria, Grenoble INP, LIG\\
    $^4$Technical University of Crete\\
    $^\dagger$Correspondence: marina.costantini@eurecom.fr } }

\maketitle

%
%

\begin{abstract}

In decentralized optimization over networks, synchronizing the updates of all nodes incurs significant communication overhead. For this reason, much of the recent literature has focused on the analysis and design of asynchronous optimization algorithms where nodes can activate anytime and contact a single neighbor to complete an iteration together. However, most works assume that the neighbor selection is done at random based on a fixed probability distribution (e.g., uniform), a choice that ignores the optimization landscape at the moment of activation. Instead, in this work we introduce an optimization-aware selection rule that chooses the neighbor providing the highest dual cost improvement (a quantity related to a dualization of the problem based on consensus). This scheme is related to the coordinate descent (CD) method with the Gauss-Southwell (GS) rule for coordinate updates; in our setting however, only a subset of coordinates is accessible at each iteration (because each node can communicate only with its neighbors), so the existing literature on GS methods does not apply. To overcome this difficulty, we develop a new analytical framework for smooth and strongly convex functions that covers our new class of \textit{setwise CD algorithms} –a class that applies to both decentralized and parallel distributed computing scenarios– and we show that the proposed setwise GS rule can speed up the convergence in terms of iterations by a factor equal to the size of the largest coordinate set. We analyze extensions of these algorithms that exploit the knowledge of the smoothness constants when available and otherwise propose an algorithm to estimate these constants. 
Finally, we validate our theoretical results through extensive simulations.

\end{abstract}

%
%

\section{Introduction} \label{sec:introduction}

Many timely applications require solving optimization problems over a network where nodes can only communicate with their direct neighbors.
This may be due to the need of distributing storage and computation loads (e.g. training large machine learning models \cite{lian2017can}), or to avoid transferring data that is naturally collected in a decentralized manner, either due to the communication costs or to privacy reasons (e.g. sensor networks \cite{wan2009event}, edge computing \cite{alrowaily2018secure}, and precision medicine \cite{warnat2021swarm}). 

Specifically, we consider a setting where the nodes want to solve the decentralized optimization problem
\begin{equation} \label{eq:primal_unconstrained}
    \underset{\theta \in \Rbb^d}{\text{minimize}} \quad \sum_{i=1}^n f_i(\theta),
\end{equation}

where each local function $f_i$ is known only by node $i$ and nodes can exchange optimization values (parameters, gradients) but \emph{not} the local functions themselves.
We represent the communication network as a graph $\Gcal = (\Vcal,\Ecal)$ with $n \, {=} \, |\Vcal| \, {<} \, \infty$ nodes (agents) and $E \, {=} \, |\Ecal|$ edges, which are the links used by the nodes to communicate with their neighbors. 

Existing methods to solve this problem usually assign to each node a local variable $\theta_i$ and execute updates that interleave a local gradient step at the nodes followed by an averaging step (usually carried out by a doubly-stochastic matrix) that aggregates the update of the node with those of its neighbors \cite{nedic2007rate, neglia2020decentralized, jakovetic2018convergence, lian2018asynchronous, yuan2016convergence, shi2015extra}.

Alternatively, the consensus constraint can be stated explicitly between node pairs connected by an edge:   
\begin{subequations} \label{eq:primal_constrained}
    \begin{alignat}{2} 
    & \underset{\theta_1,\ldots,\theta_n \in \Rbb^d}{\text{minimize}} \quad && \sum_{i=1}^n f_i(\theta_i) \label{eq:problem-a} \\ 
    & \st \quad && \theta_i = \theta_j \quad \forall \; (i,j) \equiv \ell \in \mathcal{E}, \label{eq:problem-b}
    \end{alignat} 
\end{subequations}

where $\ell \equiv (i,j)$ indicates that edge $\ell$ links nodes $i$~and~$j$.
This reformulation allows solving the decentralized optimization problem through its dual problem. 
Interestingly, dual decentralized algorithms are among the very few that have been shown to achieve optimal convergence rates \cite{scaman2017optimal,hendrikx2019accelerated,uribe2020dual} when using acceleration.
However, these algorithms are \textit{synchronous}, in the sense that they require coordinating the updates of all nodes at each iteration. 
In contrast, by exploiting connections with coordinate descent theory naturally derived from the dual formulation, here we propose an alternative way to speed up convergence in dual decentralized optimization that does \textit{not} require network-wide synchronicity.  

In our setup, nodes activate anytime at random and select one of their neighbors to make an update together.
Methods with such minimal coordination requirements avoid incurring extra costs of synchronization that may also slow down convergence
\cite{iutzeler2013asynchronous, wei2013convergence, xu2017convergence, srivastava2011distributed, ram2009asynchronous}.
However, most of these works assume that when a node activates, it simply selects the neighbor to contact randomly, based on a predefined probability distribution. 
This approach overlooks the possibility of letting nodes \emph{choose} the neighbor to contact taking into account the optimization landscape at the time of activation.
Therefore, here we depart from the probabilistic choice and ask:
\emph{can nodes pick the neighbor smartly to make the optimization converge faster?}

In this paper, we give an affirmative answer and propose algorithms that achieve this by solving the dual problem of \eqref{eq:primal_constrained}. 
In the dual formulation, there is one dual variable $\lambda_\ell \in \Rbb^d$ per constraint $\theta_i = \theta_j$, hence each dual variable can be associated with an edge $\ell$ in the graph. 
Our algorithms let an activated node $i$ contact a neighbor $j$ so that together they update their shared variable $\lambda_\ell, \; \ell \equiv (i,j)$ with a gradient step.
In particular, we propose to select the neighbor $j$ such that the updated $\lambda_\ell$ is the one \emph{whose directional gradient for the dual function is the largest}, and thus the one that provides the greatest cost improvement at that iteration.~Beyond the preliminary version of this work that appeared in \cite{costantini2022pick}, 
such dual greedy choice has not yet been considered in the literature.

Interestingly, the above protocol where a node activates and selects a $\lambda_\ell$ to update can be seen as applying the coordinate descent (CD) method \cite{nesterov2012efficiency} to solve the dual problem of \eqref{eq:primal_constrained}, with the following key difference: unlike standard CD methods, where \emph{any} of the coordinates may be updated, now \emph{only a small subset of coordinates are accessible at each step}, which are the coordinates associated with the edges connected to the node activated.
Moreover, our proposal of updating the $\lambda_\ell$ with the largest gradient is similar to the Gauss-Southwell (GS) rule\cite{nutini2015coordinate}, but applied \emph{only} to the parameters accessible by the activated node.

We name such protocols \emph{setwise CD} algorithms, and we analyze a number of possibilities for the coordinate (or equivalently, neighbor) choice: random sampling, local GS, and two extensions that take into account the smoothness constants of the dual function. 
Compared to standard CD literature, three difficulties complicate the analysis and constitute the base of our contributions:
(i) for arbitrary graphs, the dual problem of \eqref{eq:primal_constrained} has an objective function that is \emph{not} strongly convex, even if the primal functions $f_i$ \emph{are} strongly convex,
(ii) the fact that the GS rule is applied to a few coordinates prevents the use of standard norms to obtain the linear rate, as commonly done for CD methods \cite{nesterov2012efficiency, nutini2015coordinate, nutini2017let}, and 
(iii) the coordinate sets are overlapping (i.e. non-disjoint), which makes the problem even harder.

Our results also apply to the (primal) parallel distributed setting where multiple workers modify different sets of coordinates of the parameter vector, which is stored in a server accessible by all workers \cite{tsitsiklis1986distributed, peng2016arock, xiao2019dscovr}.
In particular, we show that for this setting the GS selection attains the maximum speedup promised by the theory (see Theorem \ref{theo:rate_SGS-CD}).

Our contributions can be summarized as follows: 

\begin{itemize}
    \item We introduce the class of \textit{setwise CD} algorithms for asynchronous optimization applicable to both the decentralized and the parallel distributed settings. 
    
    \item We prove linear convergence rates for smooth and strongly convex $f_i$ and four coordinate (equivalently, neighbor) choices: random uniform,  GS, and their variants when the coordinate smoothness constants are known.

    \item For the cases when these constants are not known, we propose an algorithm based on backtracking \cite{nesterov2012efficiency} for estimating them online, and show that for certain problems this method achieves faster per-iteration convergence than the exact knowledge of these constants. 

    \item To obtain the rates of all considered algorithms, we prove strong convexity in unique norms derived from our dual coordinate updates. For the algorithms using the GS rule in particular, we deviate significantly from standard proofs and we resort to carefully-crafted support norms for each algorithm that allow us to bypass the difficulties introduced by the setwise choice.

    \item We show that the speedup in terms of number of iterations of GS selection with respect to random uniform can be up to $N_{\max}$ (the size of the largest coordinate set). 

    \item We prove that the versions accounting for coordinate smoothness are provably faster than those that do not account for these constants. In particular, we show that the algorithm exploiting both the GS rule \textit{and} the smoothness knowledge is provably faster than all others.

    \item We support all our results with thorough simulations\footnote{The code to reproduce the results is available at \url{https://github.com/m-costantini/Set-wise_Coordinate_Descent/}.}.
\end{itemize}

%
%

\section{Related work} \label{sec:related_work}

A number of algorithms have been proposed to solve \eqref{eq:primal_unconstrained} asynchronously, where we consider the edge-asynchronous model in Assumption \ref{ass:asynchronous_model} (other models have been considered in the literature, e.g. \cite{bastianello2020asynchronous, tian2020achieving}). These algorithms require transmitting a fixed number of vectors (either 1, 2 or 4 in the references below) over a single edge to complete an iteration.
In \cite{ram2009asynchronous}, the activated node chooses a neighbor uniformly at random and both nodes average their primal local values.
In \cite{iutzeler2013asynchronous} the authors adapted the ADMM algorithm to the decentralized setting,
but it was the ADMM of \cite{wei2013convergence} the first one shown to converge at the same rate as the centralized ADMM. 
The algorithm of \cite{xu2017convergence} tracks the average gradients to converge to the exact optimum instead of just a neighborhood around it, as many algorithms back then. 
The algorithm of \cite{pu2020push} can be used on top of directed graphs, which impose additional challenges. 
In \cite{mao2020walkman, hendrikx2023principled} a token is passed from node to node following a Markov chain and the node who receives the token completes an iteration.
A key novelty of our scheme, compared to the works above, is that we consider the possibility of letting the nodes \emph{choose smartly} the neighbor to contact in order to make convergence faster.

Work \cite{verma2023maximal} is, to the best of our knowledge, the only work similarly considering smart neighbor selection. 
The authors propose Max-gossip, a version of the (primal) algorithm in \cite{nedic2007rate} where the activated node averages its local parameter with that of the neighbor with whom the parameter difference is the largest. 
They show that the algorithm converges sublineraly to the optimum  (for convex functions), and show in numerical simulations that it outperforms random neighbor selection.
In contrast, here we propose dual algorithms for which we show linear convergence rates (for smooth and strongly convex functions), and most importantly, (i) we \textit{prove analytically} that either applying the GS rule and/or using the Lipschitz information achieves faster convergence than random neighbor sampling, and (ii) we \textit{quantify} the magnitude of the gains. 

To achieve this, we apply coordinate descent in the dual, similar to a few related works. 
In \cite{notarnicola2016asynchronous} the authors propose a dual proximal algorithm for non-smooth functions whose edge-based variant, if applied to smooth functions, reduces to the SU-CD algorithm presented here. 
The ESDACD algorithm of \cite{hendrikx2019accelerated} applies accelerated coordinate descent to the dual problem, achieving faster convergence at the expense of sacrificing complete asynchronicity (nodes need to agree on a sequence of node activations beforehand). 
Still, none of these works considered smart, non-randomized neighbor selection, 

Finally, as mentioned earlier, our work relates to standard CD literature. 
In particular, our theorems extend the results in \cite{nutini2015coordinate}, where the GS rule was shown to be up to $d$ times faster than uniform sampling for $f{:}\ \Rbb^d {\ram} \Rbb$, to the case where this choice is constrained to a subset of the coordinates only, sets have different sizes, each coordinate belongs to exactly two sets, and sets activate uniformly at random. 
As we explain in Sections \ref{sec:algorithms} and \ref{sec:algos_with_Lips}, these considerations bring important new challenges with respect to the standard single-machine CD algorithms.
Furthermore, our algorithms are not only applicable to the decentralized case but also to parallel distributed settings such as \cite{tsitsiklis1986distributed, peng2016arock, xiao2019dscovr}.
For the latter, \cite{you2016asynchronous} also analyzed the GS applied to coordinate subsets, but their sets are disjoint, accessible by any worker, and they do not quantify the speedup of the method with respect to random uniform sampling.

%
%

\section{Dual formulation} \label{sec:model_and_problems}

Here we define the notation, obtain the dual problem of \eqref{eq:primal_constrained}, and analyze the properties of the dual objective function.  

\begin{assumption}[Smoothness and strong convexity]
The $f_i$ are $M_i$-smooth and $\mu_i$-strongly convex, i.e. there exist finite constants $M_i \geq \mu_i > 0, i \in [n]$ such that:

\begin{align*}
    f_i(y) &\leq f_i(x) + \Abraces{\nabla f_i(x), y-x} + (M_i / 2) \normtwo{y-x}^2  \\ 
    f_i(y) &\geq f_i(x) + \Abraces{\nabla f_i(x), y-x} + (\mu_i / 2) \normtwo{y-x}^2. 
\end{align*} 
\end{assumption}

We define the concatenated primal and dual variables 
$\theta = [\theta_1^T,\ldots,\theta_n^T]^T \in \Rbb^{nd}$ and 
$\lambda = [\lambda_1^T,\ldots,\lambda_E^T]^T \in~\Rbb^{Ed}$, respectively.  
The graph's incidence matrix $A \in \Rbb^{n \times E} $ has exactly one 1 and one -1 per column $\ell$, in the rows corresponding to nodes $i,j: \ell \equiv (i,j)$, and zeros elsewhere (the choice of sign for each node is irrelevant).
We call $u_i \in \Rbb^n$ the vector that has 1 in entry $i$ and 0 elsewhere; 
we define $e_\ell \in \Rbb^E$ analogously.  
We use $k \in [K]$ to indicate $k=1,\ldots,K$. 
Vectors $\ones$ and $\zeros$ are respectively the all-one and all-zero vectors, and $I_d$ is the $d \times d$ identity matrix. 
Finally, in order to use matrix operations in the equations below for some operations, we define the block arrays $\Lambda = A \otimes I_d \in \Rbb^{nd \times Ed}$ and 
$U_i = u_i \otimes I_d \in \Rbb^{nd \times d}$, where $\otimes$ is the Kronecker product.
This operation generates arrays analogous to $A$ and $u_i$ where the original entries 1, -1, and 0 have been replaced by $I_d$, $-I_d$, and the all-zero $d \times d$ matrix, respectively.

We can rewrite now \eqref{eq:problem-b} as $\Lambda^T \theta = \zeros$, and the node variables as $\theta_i = U_i^T \theta$.
The minimum value of \eqref{eq:primal_constrained} satisfies:
\begin{align*} 
\underset{\theta: \Lambda^T \theta = \zeros}{\inf}  & \sum_{i=1}^n f_i(U_i^T \theta)
\stackrel{(\text{a})}{=} \inf_{\theta} \sup_{\lambda} \Sbraces{ \sum_{i=1}^n f_i (U_i^T \theta) - \lambda^T \Lambda^T \theta } \\
&\stackrel{(\text{b})}{=} \sup_{\lambda} \inf_{\theta} \Sbraces{ \sum_{i=1}^n f_i (U_i^T \theta) - \lambda^T \Lambda^T \theta } \\
&= -\inf_{\lambda} \sup_{\theta} \sum_{i=1}^n \Sbraces{ (U_i^T \Lambda \lambda)^T U_i^T \theta -  f_i (U_i^T \theta) } \\
&= -\inf_{\lambda} \sum_{i=1}^n f_i^* (U_i^T \Lambda \lambda) 
\triangleq  -\inf_{\lambda} F(\lambda), \numberthis \label{eq:dual_problem}
\end{align*}

\noindent
where (a) holds due to Lagrange duality and (b) holds by strong duality (see e.g. Section 5.4 in \cite{boyd2004convex}).
Functions $f_i^*$ are the Fenchel conjugates of the $f_i$, and are defined as 

\[ f_i^*(y) = \sup_{x \in \Rbb^d} \paren{ y^T x - f_i(x) }. \]  

Our setwise CD algorithms converge to the optimal solution of \eqref{eq:primal_constrained} by solving \eqref{eq:dual_problem}. 
In particular, they update a single dual variable 
$\lambda_\ell,\ell \in [E]$ 
at each iteration and converge to some minimum value $\lambda^*$ of $F(\lambda)$. 

Since $\sum_{i=1}^n f_i(U_i^T \theta)$ in \eqref{eq:problem-a} is $M_{\max}$-smooth and $\mu_{\min}$-strongly convex in $\theta$, with $M_{\max} = \max_i M_i$ and $\mu_{\min} = \min_i \mu_i$, function $F$ is $L$-smooth with $L = \frac{\gamma_{\max}}{\mu_{\min}}$, where $\gamma_{\max}$ is the largest eigenvalue of $\LL$ (Section 4 in \cite{uribe2020dual}).
We call $\gamma^+_{\min}$ the smallest non-zero eigenvalue\footnote{The ``+" stresses that $\gamma^+_{\min}$ is the smallest \emph{strictly positive} eigenvalue.} of $\LL$. 

However, as shown next, function $F$ is \emph{not} strongly convex in the standard L2 norm, which is the property that usually facilitates obtaining linear rates in optimization  literature. 

\begin{lemma} \label{lemma:no_SC}
    $F$ is not strongly convex in $\normtwo{\cdot}$. 
\end{lemma}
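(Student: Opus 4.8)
The plan is to exploit the fact that $F$ depends on $\lambda$ \emph{only} through the product $\Lambda\lambda$, and that $\Lambda$ is rank-deficient, so that $F$ is flat along every direction in $\ker\Lambda$ — a property that is incompatible with strong convexity. First I would record the factorization
\[ F(\lambda) = \sum_{i=1}^n f_i^*(U_i^T \Lambda \lambda) = G(\Lambda\lambda), \qquad G(z) \triangleq \sum_{i=1}^n f_i^*(U_i^T z), \]
so that $F = G \circ \Lambda$ is a composition of $G$ with the linear map $\Lambda$.

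Next I would argue that $\ker\Lambda \neq \{\zeros\}$. Since each column of the incidence matrix $A$ contains exactly one $+1$ and one $-1$, every column sums to zero, i.e. $\ones^T A = \zeros^T$; the rows of $A$ are therefore linearly dependent and $\rank A \leq n-1$, so $\dim\ker A \geq E-(n-1) > 0$ whenever the graph contains a cycle. Because $\Lambda = A \otimes I_d$, this makes $\ker\Lambda$ nontrivial as well — equivalently, $\LL$ is singular, which is exactly what is presupposed in defining $\gamma^+_{\min}$ as its smallest \emph{nonzero} eigenvalue. Fix any $v \in \ker\Lambda$ with $\normtwo{v} > 0$. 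Then $\Lambda(\lambda + t v) = \Lambda\lambda$ for all $t \in \Rbb$, hence $F(\lambda + t v) = G(\Lambda\lambda) = F(\lambda)$, so $t \mapsto F(\lambda + t v)$ is constant; differentiating at $t=0$ gives $\Abraces{\nabla F(\lambda), v} = 0$. Now suppose, toward a contradiction, that $F$ were $\sigma$-strongly convex in $\normtwo{\cdot}$ for some $\sigma > 0$. Applying the strong-convexity inequality between $\lambda$ and $\lambda + t v$ and using the two facts just established yields $F(\lambda) = F(\lambda + t v) \geq F(\lambda) + 0 + (\sigma/2)\, t^2 \normtwo{v}^2$, i.e. $0 \geq (\sigma/2)\, t^2 \normtwo{v}^2$, which is false for any $t \neq 0$. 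Hence no such $\sigma$ exists and $F$ is not strongly convex in $\normtwo{\cdot}$.

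The only genuine obstacle is the middle step: certifying that $\Lambda$ (equivalently $A$) is actually rank-deficient, which is where the graph topology enters. The claim is in fact vacuous for a tree, where $\Lambda$ has full column rank and $F$ \emph{is} strongly convex; the argument therefore rests on the presence of at least one cycle, or equivalently on $\LL$ having a nontrivial null space. Everything else — the factorization $F = G\circ\Lambda$, the constancy of $F$ along $\ker\Lambda$, and the final contradiction — is routine, and notably does not even require second-order information about $F$.
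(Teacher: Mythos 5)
Your proof is correct and follows essentially the same route as the paper: the paper's own (one-line) argument likewise observes that $\Lambda$ lacks full column rank unless the graph is a tree, so there exists $w \neq \zeros$ with $F(\lambda + tw) = F(\lambda)$ for all $t$, which is incompatible with strong convexity in $\normtwo{\cdot}$. You have simply made explicit what the paper leaves implicit — the factorization $F = G \circ \Lambda$, the rank bound via $\ones^T A = \zeros^T$, and the contradiction with the strong-convexity inequality — including the same caveat that the claim is vacuous for trees.
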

\begin{proof}
    Since $\Lambda$ does not have full column rank in the general case (i.e., unless the graph is a tree), there exist $w~\in~\Rbb^{Ed}$ such that $w \neq \zeros$ and $F(\lambda) = F(\lambda + tw) \; \forall t \in \Rbb$.
\end{proof} 

Nevertheless, we can still show linear rates for the setwise CD algorithms using the following result.

\begin{lemma}[\textbf{\textit{Appendix C of} \cite{hendrikx2019accelerated}}] \label{lemma:SC_in_AA}
    $F$ is $\sigma_A$-strongly convex in the semi-norm $\normAA{x} \triangleq (x^T \Lambda^+ \Lambda x)^\frac{1}{2}$, with $\sigma_A = \frac{\gamma^+_{\min}}{M_{\max}}$. 
\end{lemma}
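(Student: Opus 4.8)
The plan is to exploit the compositional structure of the dual objective. Writing $g(\theta) \triangleq \sum_{i=1}^n f_i^*(U_i^T \theta)$, the dual function factors as $F(\lambda) = g(\Lambda \lambda)$, i.e. a block-separable function composed with the linear map $\Lambda$. The first ingredient is strong convexity of $g$: by Fenchel conjugacy, $M_i$-smoothness of each $f_i$ is equivalent to $(1/M_i)$-strong convexity of its conjugate $f_i^*$, and since $g$ is separable across the node blocks $\theta_i = U_i^T \theta$, it is $(1/M_{\max})$-strongly convex in $\normtwo{\cdot}$ (each block contributes modulus $1/M_i \geq 1/M_{\max}$). Because the $f_i$ are also strongly convex, the $f_i^*$ are differentiable, so $g$ and hence $F$ are differentiable and the chain rule gives $\nabla F(\lambda) = \Lambda^T \nabla g(\Lambda\lambda)$.

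Next I would transport this strong convexity through $\Lambda$. Applying the strong-convexity inequality for $g$ at $\Lambda\lambda$ and $\Lambda\lambda'$ and rewriting the linear term via the adjoint identity above yields, with $w \triangleq \lambda' - \lambda$,
\[ F(\lambda') \geq F(\lambda) + \Abraces{\nabla F(\lambda),\, w} + \frac{1}{2M_{\max}} \normtwo{\Lambda w}^2. \]
Since $\normtwo{\Lambda w}^2 = w^T \Lambda^T \Lambda w$ and $\normAA{w}^2 = w^T \Lambda^+\Lambda w$, the statement reduces to the purely spectral claim
\[ w^T \Lambda^T \Lambda\, w \ \geq\ \gamma^+_{\min}\, w^T \Lambda^+ \Lambda\, w \qquad \text{for all } w \in \Rbb^{Ed}. \]

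The crux — and the only step that is more than bookkeeping — is this last inequality, which I would prove via the singular value decomposition $\Lambda = \sum_k \sigma_k u_k v_k^T$ over the nonzero singular values. Decomposing $w = \sum_k c_k v_k + w_\perp$ with $w_\perp \in \ker \Lambda$ orthogonal to every $v_k$, the kernel component annihilates both forms, so both depend only on $c_k = v_k^T w$: one gets $w^T \Lambda^T \Lambda w = \sum_k \sigma_k^2 c_k^2$, while $w^T \Lambda^+\Lambda w = \sum_k c_k^2$ because $\Lambda^+\Lambda = \sum_k v_k v_k^T$ is exactly the orthogonal projector onto $\range(\Lambda^T)$. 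Bounding each $\sigma_k^2 \geq \gamma^+_{\min}$ termwise gives the claim, and substituting back produces strong convexity with modulus $\sigma_A = \gamma^+_{\min}/M_{\max}$. The subtlety worth flagging is conceptual rather than computational: $\Lambda^+\Lambda$ being a projector is precisely what makes $\normAA{\cdot}$ degenerate on $\ker\Lambda$ (consistent with Lemma \ref{lemma:no_SC}), and restricting to the kernel-free component is what lets the two quadratic forms be compared on the common subspace $\range(\Lambda^T)$, where $\Lambda^T\Lambda \succeq \gamma^+_{\min} I$.
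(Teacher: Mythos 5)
Your proof is correct, but there is nothing in the paper to compare it against: the paper does not prove Lemma \ref{lemma:SC_in_AA} internally, it imports it wholesale from Appendix C of \cite{hendrikx2019accelerated}. Your argument is a faithful, self-contained reconstruction of that standard external proof: (i) conjugate duality turns $M_i$-smoothness of $f_i$ into $(1/M_i)$-strong convexity of $f_i^*$, hence $(1/M_{\max})$-strong convexity of the block-separable outer function $g$; (ii) composing with $\Lambda$ and using $\nabla F(\lambda) = \Lambda^T \nabla g(\Lambda\lambda)$ yields a quadratic lower bound in $\normtwo{\Lambda w}^2$; (iii) the SVD comparison $w^T \Lambda^T \Lambda w \geq \gamma^+_{\min}\, w^T \Lambda^+ \Lambda w$ converts that bound into the seminorm $\normAA{\cdot}$, giving $\sigma_A = \gamma^+_{\min}/M_{\max}$. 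Two points are worth making explicit. First, step (iii) silently repairs a notational glitch in the paper: as literally written, $\gamma^+_{\min}$ is the smallest nonzero eigenvalue of $\Lambda^+\Lambda$, which is an orthogonal projector and so has all nonzero eigenvalues equal to $1$; the intended (and standard) quantity, which your proof actually uses, is the smallest nonzero eigenvalue of $\Lambda^T\Lambda$ (equivalently, of the graph Laplacian), i.e. the smallest squared nonzero singular value of $\Lambda$. Second, your observation that the termwise bound $\sigma_k^2 \geq \gamma^+_{\min}$ is legitimate only because the kernel component of $w$ is annihilated by both quadratic forms is exactly the right subtlety: it is why strong convexity holds in the degenerate seminorm $\normAA{\cdot}$ even though, by Lemma \ref{lemma:no_SC}, it fails in $\normtwo{\cdot}$. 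In short: correct, complete, and consistent with the external proof the paper cites.
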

        
Above, $\Lambda^+$ denotes the pseudo-inverse of $\Lambda$. 
A key fact for the proofs in the next section is that matrix $\Lambda^+ \Lambda$ is a projector onto $\range(\Lambda^T)$, the column space of $\Lambda^T$.

To simplify the notation, in what follows we assume that $d=1$, so that $\Lambda = A$, $U_i = u_i$, and the gradient
$\nabla_\ell F(\lambda) = \frac{\partial F(\lambda)}{\partial \lambda_\ell}$ 
of $F(\lambda)$ in the direction of $\lambda_\ell$ is a scalar. 
In Section \ref{sec:d_larger_1} we discuss how to adapt our proofs to the case $d > 1$.

%
%

\section{Setwise Coordinate Descent Algorithms} \label{sec:algorithms}

In this section, we present the \emph{setwise CD} algorithms, which can solve generic convex problems 
(and \eqref{eq:dual_problem} in particular) optimally and asynchronously.
For solving \eqref{eq:dual_problem} we rely~on:
\begin{assumption}[Asynchronous model] \label{ass:asynchronous_model}
Nodes activate independently at random (potentially with different frequencies) and choose a neighbor with whom to complete an update. 
Multiple \emph{pairs} of nodes may complete their updates in parallel, 
provided that each node updates with a single neighbor.
\end{assumption}

Here we analyze two possibilities for the coordinate choice within the accessible coordinate subset: (i) sampling uniformly at random (SU-CD), and (ii) applying the GS rule (SGS-CD).
Their extensions when the coordinate-wise Lipschitz constants are known (or can be estimated) are analyzed in Section \ref{sec:algos_with_Lips}.

If coordinate $\ell$ is updated at iteration $k$ and assuming \hbox{$d=1$}, the standard CD update applied to $F(\lambda)$~is \cite{nesterov2012efficiency}:
\begin{equation} \label{eq:CD_update}
    \lambda^{k+1} = \lambda^k - \eta^k \nabla_\ell F(\lambda^k) e_\ell,
\end{equation}
where $\eta^k$ is the stepsize. Since $F$ is $L$-smooth, choosing $\eta^k~{=}~1/L \; \forall k$ guarantees descent at each iteration \cite{nutini2015coordinate}:
\begin{equation} \label{eq:iter_progress}
    F(\lambda^{k+1}) \leq F(\lambda^k) - \frac{1}{2L} \paren{\nabla_\ell F(\lambda^k)}^2.
\end{equation}
  
Eq. \eqref{eq:iter_progress} will be the departure point to prove the linear convergence rates of SU-CD and SGS-CD. 
We remark that since $F$ is smooth and convex, the fixed stepsize choice above leads to exact convergence to an optimum of the function \cite{nesterov2012efficiency}.

We now define formally the setwise CD algorithms. 

\begin{definition}[\textit{\textbf{Setwise CD algorithm}}] \label{def:setwise_CD}
    In a setwise CD algorithm, every coordinate 
    $\ell \in [E]$ is assigned to (potentially multiple) sets 
    $\Scal_i,i \in [n]$, such that all coordinates belong to at least one set.
    At any 
    time, a set $\Scal_i$ may activate with uniform probability among the $i$; 
    a setwise CD algorithm then chooses a single coordinate $\ell \in \Scal_i$ to update using \eqref{eq:CD_update}.
\end{definition}

The next remark shows how the decentralized problem \eqref{eq:primal_constrained} can be solved asynchronously with setwise CD~algorithms.

\begin{remark} \label{rem:setwise_for_asych_optim}
    By letting (i) the $E$ coordinates\footnote{If $d \; {>} \;1$, the standard CD terminology calls each $\lambda_\ell$ a ``block coordinate", i.e. a vector of $d$ coordinates out of the $E \cdot d$ of $F: \Rbb^{E \cdot d} \ram \Rbb$.} 
    in Definition \ref{def:setwise_CD} correspond to the dual variables $\lambda_\ell, \ell \in [E]$, and (ii) the $\Scal_i, i \in [n]$ be the sets of dual variables corresponding to the edges that are connected to each node $i$, nodes can run a setwise CD algorithm to solve \eqref{eq:dual_problem} (and thus, also \eqref{eq:primal_constrained}) asynchronously.
\end{remark}

Furthermore, the setwise CD algorithms can also be used in the parallel distributed setting, as explained below. 

\begin{remark} 
    In a parallel distributed setting, where the parameter vector $x \in \Rbb^E$ is stored\footnote{Also in this case we may actually have $x \in \Rbb^{d \cdot E}$ and each worker may update a block coordinate $x_\ell \in \Rbb^d$ at each iteration.} in a shared server and can be modified by $n$ workers, the (primal) optimization can be done with setwise CD algorithms by letting each worker $i \in [n]$ modify a subset $\Scal_i$ of coordinates such that when the worker activates it chooses one coordinate $\ell \in \Scal_i$ to modify, and all coordinates $\ell \in [E]$ belong to at least one set $\Scal_i, i \in [n]$. 
\end{remark}

We also note that, while in the parallel distributed setting each coordinate may belong to any number of sets in $[1,n]$, our results apply to the case where all coordinates belong to \textit{exactly} two sets (i.e. they can be modified by two workers), since our analysis is driven by the decentralized~setting.

In light of Remark \ref{rem:setwise_for_asych_optim}, in the following we illustrate the steps that should be performed by the nodes to run the setwise CD algorithms to find an optimal value $\lambda^*$. 
We first note that the gradient of $F$ in the direction\footnote{This is equivalent to saying ``the $\ell$-th (block) entry of the gradient $\nabla F$".} of $\lambda_\ell$ for $\ell \equiv (i,j)$ is 
\begin{equation} \label{eq:coord_gradient}
    \nabla_\ell F(\lambda) = A_{i\ell} \nabla f_i^*(u_i^T A \lambda) 
    + A_{j\ell} \nabla f_j^*(u_j^T A \lambda).
\end{equation}

The nodes can use \eqref{eq:CD_update} and \eqref{eq:coord_gradient} to update the $\lambda_\ell$ of the edges they are connected to as follows: 
each node $i$ keeps in memory the current values of $\lambda_\ell, \ell \in \Scal_i$, which are needed to compute $\nabla f_i^*(u_i^T A \lambda)$.
Then, when edge $\ell \equiv (i,j)$ is updated (either because node $i$ activated and contacted $j$, or vice versa), both $i$ and $j$ compute their respective terms in the right-hand side of \eqref{eq:coord_gradient} and exchange them through their link. 
Finally, both nodes compute \eqref{eq:coord_gradient} and update their copy of $\lambda_\ell$ applying \eqref{eq:CD_update}. 

Algorithms \ref{alg:SU-CD} and \ref{alg:SGS-CD} below detail these steps for SU-CD and SGS-CD, respectively.
We have used $\Ncal_i$ to indicate the set of neighbors of node $i$ (note that $\Scal_i = \{ \ell: \ell \equiv (i,j), j \in \Ncal_i \}$).
Table \ref{tab:set_deifnitions} shows this and other set-related notation we will use.

We now proceed to describe the SU-CD and SGS-CD algorithms in detail, and prove their linear convergence rates. 
We remark that the analyses that follow are given in terms of number of iterations. See Section \ref{sec:asynchrony_realistic} for a discussion considering communication and computation complexity.

%
%
\begin{table}[t]
\centering
    \caption{Set-related definitions}
    \label{tab:set_deifnitions}
\begin{tabu} {|ll|}  \hline
    $\Scal_i$       & Set of edges connected to node $i$ \\  
    $\Ncal_i$       & Set of neighbors of node $i$   \\ 
    $N_i$           & Degree of node $i$, i.e. $N_i = \abs{\Scal_i} = \abs{\Ncal_i}$ \\ 
    $N_{\max}$      & Maximum degree in the network, i.e. $\max_i N_i$ \\ 
    $T_i$           & Selector matrix of set $\Scal_i$ (see Definition 
    \ref{def:selector_matrices}) \\
    $\Scal_i'$      & Subset $\Scal_i' \subseteq \Scal_i$ such that 
                    $\Scal_i' \cap \Scal_j' = \emptyset$ if $i \neq j$ \\ 
    $T_i'$          & Selector matrix of set $\Scal_i'$ \\ 
    \rule{0pt}{10pt}$\overline{\Scal_i'}$   
                    & Complement set of $\Scal_i'$ such that 
                    $\overline{\Scal_i'} = \Scal_i \setminus \Scal_i'$ \\ 
    \rule{0pt}{10pt}$\overline{T_i'}$       
                    & Selector matrix of set $\overline{\Scal_i'}$ \vspace{0.2em} \\ 
    \hline
\end{tabu}
\end{table}

%
%

\subsection{Setwise Uniform CD (SU-CD)}

In SU-CD, the activated node chooses the neighbor uniformly at random, as shown in Algorithm \ref{alg:SU-CD}. 
We can compute the per-iteration progress of SU-CD taking expectation in \eqref{eq:iter_progress}:
\begin{align*} 
    \Ebb \Big[ F(\lambda^{k+1}) \mid \lambda^k & \Big]
    \leq F(\lambda^k) - \frac{1}{2L} \Exp{\paren{\nabla_\ell F(\lambda^k)}^2 \mid \lambda^k } \\
    &= F(\lambda^k) - \frac{1}{2Ln} \sum_{i=1}^n \frac{1}{N_i} \sum_{\ell \in \Scal_i}  
    \paren{\nabla_\ell F(\lambda^k)}^2 \\
    &\leq F(\lambda^k) - \frac{1}{L n N_{\max}} \normtwo{\nabla F(\lambda^k)}^2 
    \numberthis \label{eq:progress_SU-CD}
\end{align*}
where $N_i \; {=} \; \abs{\Scal_i}$, $N_{\max}  \; {=} \; \max_i N_i$, and the factor 2 in the denominator disappears because each coordinate $\ell \equiv (i,j)$ is counted twice in the sums of $\Scal_i$ and $\Scal_j$, respectively.

The standard procedure to show the linear convergence of CD in the single-machine case is to lower-bound $\normtwo{\nabla F(\lambda)}^2$ using the strong convexity of the function \cite{nesterov2012efficiency, nutini2015coordinate}.
However, since $F$ is \emph{not} strongly convex (Lemma \ref{lemma:no_SC}), we cannot apply this procedure to get the linear rate of SU-CD.

We can, however, use $F$'s strong convexity in $\normAA{\cdot}$ instead (Lemma \ref{lemma:SC_in_AA}).
The next result gives the core of the proof.

\begin{lemma} \label{lemma:equality_of_norms}
    It holds that 
    \begin{equation} \label{eq:equality_norms_2_and_A}
        \normtwo{\nabla F(\lambda)} = \normAA{\nabla F(\lambda)} = \normAA{\nabla F(\lambda)}^*,
    \end{equation}
    where $\normAA{\cdot}^*$ is the dual norm of $\normAA{\cdot}$, defined as (e.g. \cite{boyd2004convex})
    \begin{equation} \label{eq:dual_norm_AA}
        \normAA{z}^* = \sup_{x \in \Rbb^d} \Cbraces{ z^T x \biggr\rvert \normAA{x} \leq 1 }.
    \end{equation}
\end{lemma}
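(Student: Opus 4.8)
The plan is to exploit the special structure of $\nabla F$: because $F(\lambda) = \sum_i f_i^*(u_i^T A \lambda)$ depends on $\lambda$ only through the products $u_i^T A \lambda$, i.e. only through $A\lambda$, the chain rule gives $\nabla F(\lambda) = A^T v$ with $v = \sum_i \nabla f_i^*(u_i^T A \lambda)\, u_i \in \Rbb^n$ (this is just \eqref{eq:coord_gradient} written in vector form). Hence $\nabla F(\lambda) \in \range(\Lambda^T)$ for every $\lambda$, recalling that $\Lambda = A$ since $d=1$. This single observation is what drives all three equalities, so I would prove it first.

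Next I would establish $\normtwo{\nabla F(\lambda)} = \normAA{\nabla F(\lambda)}$. Writing $P \triangleq \Lambda^+ \Lambda$, recall (as noted right after Lemma \ref{lemma:SC_in_AA}) that $P$ is the orthogonal projector onto $\range(\Lambda^T)$. Since $\nabla F(\lambda) \in \range(\Lambda^T)$, it is left unchanged by the projector, $P \nabla F(\lambda) = \nabla F(\lambda)$, and therefore $\normAA{\nabla F(\lambda)}^2 = \nabla F(\lambda)^T P \nabla F(\lambda) = \nabla F(\lambda)^T \nabla F(\lambda) = \normtwo{\nabla F(\lambda)}^2$. Taking square roots closes this part.

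For the dual-seminorm equality I would argue as follows. For a \emph{generic} $z$ the quantity $\normAA{z}^*$ can be $+\infty$, because $\normAA{\cdot}$ is only a \emph{semi}-norm whose null space is $\range(\Lambda^T)^\perp = \ker(\Lambda)$ (this degeneracy is exactly what underlies Lemma \ref{lemma:no_SC}): choosing $x$ along a null direction keeps $\normAA{x}=0\le 1$ while $z^T x$ may be scaled without bound. The point is that we only ever evaluate the dual norm at $z = \nabla F(\lambda) \in \range(\Lambda^T)$. For such $z$, decompose any feasible $x$ as $x = Px + (I-P)x$; since $z \in \range(\Lambda^T)$ while $(I-P)x \in \range(\Lambda^T)^\perp$, the cross term vanishes and $z^T x = z^T Px$. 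The constraint reads $\normAA{x} = \normtwo{Px}\le 1$, so with $y = Px$ the supremum becomes $\sup\{\, z^T y : y \in \range(\Lambda^T),\ \normtwo{y}\le 1 \,\} = \normtwo{z}$, attained at $y = z/\normtwo{z} \in \range(\Lambda^T)$. Hence $\normAA{\nabla F(\lambda)}^* = \normtwo{\nabla F(\lambda)}$, which chained with the first equality yields the claim.

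The main obstacle is precisely this last step: since $\normAA{\cdot}$ is a true semi-norm rather than a norm, its dual is not automatically finite, and a naive computation would return $+\infty$. Everything hinges on having first shown that $\nabla F$ never leaves $\range(\Lambda^T)$, which annihilates the null-space directions responsible for the blow-up; the projector identity $P\nabla F = \nabla F$ is what makes both the primal-norm and the dual-norm computations collapse onto the plain Euclidean norm.
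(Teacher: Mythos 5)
Your proof is correct and follows the same overall route as the paper's: establish that $\nabla F(\lambda) \in \range(\Lambda^T)$, use the projector $\Lambda^+\Lambda$ to identify $\normAA{\nabla F(\lambda)}$ with $\normtwo{\nabla F(\lambda)}$, and then pass to the dual norm. Two details differ, both in your favor. First, you obtain the membership $\nabla F(\lambda) \in \range(A^T)$ constructively from the chain rule ($\nabla F(\lambda) = A^T v$), whereas the paper derives it from the invariance directions of $F$: for every $w \neq \zeros$ with $F(\lambda + tw) = F(\lambda)$ for all $t$, one has $w^T \nabla F(\lambda) = 0$, and these $w$ are exactly $\ker(A)$, so $\nabla F(\lambda) \perp \ker(A) = \range(A^T)^\perp$. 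Both arguments are valid and equally short. Second, for the equality $\normAA{\nabla F(\lambda)}^* = \normtwo{\nabla F(\lambda)}$, the paper simply invokes the fact that the dual norm of the L2 norm is the L2 norm itself, which is terse given that $\normAA{\cdot}$ is only a seminorm whose dual is $+\infty$ at any $z$ with a component in $\ker(\Lambda)$. Your explicit computation --- splitting $x = Px + (I-P)x$, noting the cross term vanishes for $z \in \range(\Lambda^T)$, and reducing the supremum to the Euclidean one over the range, attained at $y = z/\normtwo{z}$ --- supplies exactly the justification the paper leaves implicit, and correctly pinpoints that finiteness of the dual seminorm holds only because it is evaluated at gradient vectors.
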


\begin{proof}
    Note that 
    $\forall w \neq \zeros$ such that $F(\lambda + tw) = F(\lambda) \; \forall t$,
    it holds that $w^T \nabla F(\lambda) = 0$ 
    and thus $\nabla F(\lambda) \in \range(A^T)$.
    This means that $\AA \nabla F(\lambda) = I_E \nabla F(\lambda)$, and therefore it holds that
    $\normAA{\nabla F(\lambda)} = \normtwo{\nabla F(\lambda)}$.
    Finally, since the dual norm of the L2 norm is the L2 norm itself, we have that also $\normAA{\nabla F(\lambda)}^* = \normtwo{\nabla F(\lambda)}$, which gives the result.
\end{proof}

We now use Lemma \ref{lemma:equality_of_norms} to prove the linear rate of SU-CD.
Note that since $\lambda^*$ is an optimum of $F$, convergence is exact. 
This is valid not only for SU-CD, but for all other algorithms presented in this paper.

%
\setlength{\textfloatsep}{10pt} 
\begin{algorithm}[t]
\caption{Setwise Uniform CD (SU-CD)}
\label{alg:SU-CD}
\begin{algorithmic}[1]
    \MYSTATE {\bfseries Input:} Functions $f_i$, step $\eta$, incidence matrix $A$, graph~$\Gcal$
    \MYSTATE Initialize $\theta_i^0, i=1,\ldots,n$ and $\lambda_\ell^0, \ell=1,\ldots,E$
    \MYSTATE \textbf{for} $k = 1,2,\ldots$ \textbf{do}
        \MYSTATE \algindent Sample activated node $i \in \{1,\ldots,n\}$ uniformly
        \MYSTATE \algindent Node $i$ picks neighbor $j \gets \Ucal \{h: h \in \Ncal_i \}$
        \MYSTATE \algindent Node $i$ computes $\nabla f_i^*(u_i^T A \lambda)$ and sends it to $j$
        \MYSTATE \algindent Node $j$ computes $\nabla f_j^*(u_j^T A \lambda)$ and sends it to $i$
        \MYSTATE \algindent \parbox[t]{\dimexpr\linewidth-\algorithmicindent}{Nodes $i,j{:}\;(i,j)\equiv\ell$ use \eqref{eq:coord_gradient} to update their local copies of $\lambda_\ell$ by  
        $\lambda_\ell^k \gets \lambda_\ell^{k-1} - \eta \nabla_\ell F(\lambda)$} \label{line:SU_update} \\
        \MYSTATE \algindent $\lambda_m^k \gets \lambda_m^{k-1} \; \forall \text{ edges } m \neq \ell$
\end{algorithmic}
\end{algorithm}

\begin{theorem}[\textit{\textbf{Rate of SU-CD}}] \label{theo:rate_SU-CD}
    SU-CD converges as
    \begin{equation*}
        \Exp{F(\lambda^{k+1}) \mid \lambda^k} - F(\lambda^*) \leq
            \paren{1 - \frac{2\sigma_A}{LnN_{\max}}} \Sbraces{F(\lambda^k) - F(\lambda^*)}. 
    \end{equation*}
\end{theorem}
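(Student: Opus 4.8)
The plan is to convert the per-iteration descent bound \eqref{eq:progress_SU-CD} into a linear contraction by establishing a Polyak--{\L}ojasiewicz (PL)-type inequality that lower-bounds $\normtwo{\nabla F(\lambda^k)}^2$ in terms of the suboptimality gap $F(\lambda^k) - F(\lambda^*)$. The strong convexity of $F$ in the $\normAA{\cdot}$ semi-norm (Lemma \ref{lemma:SC_in_AA}) is the natural source for such a bound, and Lemma \ref{lemma:equality_of_norms} is what lets us translate the resulting dual-norm quantity back into the ordinary L2 norm appearing in \eqref{eq:progress_SU-CD}.

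First I would write the strong convexity inequality for an arbitrary $y$,
\[
F(y) \geq F(\lambda^k) + \Abraces{\nabla F(\lambda^k), y - \lambda^k} + \frac{\sigma_A}{2}\normAA{y - \lambda^k}^2,
\]
and minimize both sides over $y$. The left-hand side attains its minimum $F(\lambda^*)$. For the right-hand side I use the standard Fenchel identity $\min_z \Sbraces{\Abraces{g,z} + \frac{\sigma_A}{2}\normAA{z}^2} = -\frac{1}{2\sigma_A}\paren{\normAA{g}^*}^2$, which yields
\[
F(\lambda^*) \geq F(\lambda^k) - \frac{1}{2\sigma_A}\paren{\normAA{\nabla F(\lambda^k)}^*}^2,
\]
and rearranges to $\paren{\normAA{\nabla F(\lambda^k)}^*}^2 \geq 2\sigma_A \Sbraces{F(\lambda^k) - F(\lambda^*)}$. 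Next I invoke Lemma \ref{lemma:equality_of_norms}, which gives $\normAA{\nabla F(\lambda^k)}^* = \normtwo{\nabla F(\lambda^k)}$, to obtain the PL inequality $\normtwo{\nabla F(\lambda^k)}^2 \geq 2\sigma_A \Sbraces{F(\lambda^k) - F(\lambda^*)}$. Substituting this into \eqref{eq:progress_SU-CD} and subtracting $F(\lambda^*)$ from both sides produces the claimed contraction factor $1 - \frac{2\sigma_A}{LnN_{\max}}$ directly.

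The step I expect to require the most care is the minimization over $y$ with a \emph{semi}-norm rather than a genuine norm: along directions $w$ with $\normAA{w}=0$ (i.e.\ $w \in \ker(A)$) the quadratic term vanishes, so a priori the infimum of the right-hand side could be $-\infty$ and the Fenchel identity would fail. This is precisely ruled out by the fact, established in the proof of Lemma \ref{lemma:equality_of_norms}, that $\nabla F(\lambda^k) \in \range(A^T)$, which is orthogonal to $\ker(A)$; hence the linear term $\Abraces{\nabla F(\lambda^k), w}$ also vanishes on those degenerate directions, the effective minimization takes place on $\range(A^T)$ where $\normAA{\cdot}$ coincides with $\normtwo{\cdot}$, and the identity applies. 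Carrying the argument in terms of the dual semi-norm $\normAA{\cdot}^*$ and only then converting via Lemma \ref{lemma:equality_of_norms} keeps this subtlety contained; the remaining manipulations are routine algebra. I would also note, as the excerpt already remarks, that because $\lambda^*$ is a genuine optimum the convergence is exact, so no additive error term appears.
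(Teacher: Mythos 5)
Your proof is correct and follows essentially the same route as the paper's: strong convexity of $F$ in $\normAA{\cdot}$ (Lemma \ref{lemma:SC_in_AA}), minimization of both sides over $y$ to get the bound $\paren{\normAA{\nabla F(\lambda^k)}^*}^2 \geq 2\sigma_A \Sbraces{F(\lambda^k) - F(\lambda^*)}$, conversion to the L2 norm via Lemma \ref{lemma:equality_of_norms}, and substitution into \eqref{eq:progress_SU-CD}. Your explicit justification that the minimization over the degenerate directions in $\ker(A)$ is harmless because $\nabla F(\lambda^k) \in \range(A^T)$ is a welcome addition that the paper leaves implicit.
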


\begin{proof}
    Since $F(\lambda)$ is strongly convex in $\normAA{\cdot}$ with strong convexity constant $\sigma_A$ (Lemma \ref{lemma:SC_in_AA}), it holds
    \[ F(y) \geq F(x) + \Abraces{\nabla  F(x), y - x} + \frac{\sigma_A}{2} \normAA{y-x}^2. \]
    
    Minimizing both sides with respect to $y$ as in \cite{nutini2015coordinate} we get 
             
    \begin{equation} \label{eq:guarantee_suboptim_SU}
        F(x^*) \geq F(x) - \frac{1}{2\sigma_A} \paren{ \normAA{\nabla  F(x)}^* }^2,
    \end{equation}

    \noindent
    and rearranging terms we obtain the lower bound 
    $\paren{\normAA{\nabla F(x)}^*}^2 \geq 2\sigma_A (F(x) - F(x^*))$.
    
    Finally, we can use Lemma \ref{lemma:equality_of_norms} to replace 
    $\norm{\nabla  F(x)}_2^2$ with $\paren{\normAA{\nabla F(x)}^*}^2$
    in \eqref{eq:progress_SU-CD}, and use the lower bound on $\paren{\normAA{\nabla F(x)}^*}^2$ to get the result.
\end{proof}

Note that vector $\lambda$ has $\frac{1}{2} \sum_i N_i = E \leq \frac{nN_{\max}}{2}$ coordinates, where the inequality holds with equality for regular graphs. 
We make the following remark.

\begin{remark}
    If $\Gcal$ is regular, the linear convergence rate of SU-CD is $\frac{\sigma_A}{LE}$, which matches the rate of single-machine uniform CD for strongly convex functions \cite{nesterov2012efficiency, nutini2015coordinate}, with the only difference that now the strong convexity constant $\sigma_A$ is defined over norm $\normAA{\cdot}$.
\end{remark}

In the next section we analyze SGS-CD and show that its convergence rate can be up to $N_{\max}$ times that of SU-CD.

%
%

\subsection{Setwise Gauss-Southwell CD (SGS-CD)}

%
\setlength{\textfloatsep}{10pt} 
\begin{algorithm}[t]
\caption{Setwise Gauss-Southwell CD (SGS-CD)}
\label{alg:SGS-CD}
\begin{algorithmic}[1]
    \MYSTATE {\bfseries Input:} Functions $f_i$, step $\eta$, incidence matrix $A$, graph~$\Gcal$
    \MYSTATE Initialize $\theta_i^0, i=1,\ldots,n$ and $\lambda_\ell^0, \ell=1,\ldots,E$
    
    \MYSTATE \textbf{for} $k = 1,2,\ldots$ \textbf{do}
        \MYSTATE \algindent Sample activated node $i \in \{1,\ldots,n\}$ uniformly

        \MYSTATE \algindent All $h \in \Ncal_i$ send their $\nabla f_h^*(u_h^T A \lambda)$ to node $i$\label{line:all_neighbors_send_grads}\\
        
        \MYSTATE\algindent\parbox[t]{\dimexpr\linewidth-\algorithmicindent}{
        Node $i$ computes $\nabla_\ell F(\lambda) \; \forall \ell \in \Scal_i$ (equivalently, $\forall h \in \Ncal_i)$ with 
        \eqref{eq:coord_gradient} using the received $\nabla f_h^*(u_h^T A \lambda)$}\\

        
        \MYSTATE \algindent Node $i$ selects $j \gets \max_{h \in \Ncal_i} \abs{\nabla_\ell F(\lambda)}, 
        \ell {\equiv} (i,h)$
        \MYSTATE \algindent Node $i$ sends $\nabla f_i^*(u_i^T A \lambda)$ to $j$
        \MYSTATE \algindent \parbox[t]{\dimexpr\linewidth-\algorithmicindent}{Nodes $i,j{:}\;(i,j)\equiv\ell$ use \eqref{eq:coord_gradient} to update their local copies of $\lambda_\ell$ by  
        $\lambda_\ell^k \gets \lambda_\ell^{k-1} - \eta \nabla_\ell F(\lambda)$} \label{line:SGS_update} \\
        \MYSTATE \algindent Nodes $i$ and $j$ recompute their $\nabla f_h^*(u_h^T A \lambda), h \in \{i,j\}$\label{line:recompute_grads}\\
        \MYSTATE \algindent $\lambda_m^k \gets \lambda_m^{k-1} \; \forall \text{ edges } m \neq \ell$
\end{algorithmic}
\end{algorithm}

In SGS-CD, as shown in Algorithm \ref{alg:SGS-CD}, the activated node $i$ selects the neighbor $j$ to contact applying the GS rule within the edges in~$\Scal_i$:
 
\begin{equation*}
    \ell = \argmax_{m \in \Scal_i} \paren{\nabla_m F(\lambda)}^2, 
\end{equation*} 
  
\noindent
and then $j$ is the neighbor that satisfies $\ell \equiv (i,j)$.
In order to make this choice, all nodes $h \in \Ncal_i$ must send their $\nabla f_h^*(u_h^T A \lambda)$ to node $i$ (line \ref{line:all_neighbors_send_grads} in Algorithm \ref{alg:SGS-CD}). 
Remark that the $\nabla f_h^*$ only need to be recomputed by the nodes that complete the update (line \ref{line:recompute_grads}), and not by all neighbors originally pinged by the activated node. 
This keeps the computation complexity of SGS-CD equal to that of SU-CD, and only communication has increased. 
We discuss this fact in Section \ref{sec:conclusion}.

To obtain the convergence rate of SGS-CD we will follow the steps taken for SU-CD in the proof of Theorem \ref{theo:rate_SU-CD}.
As done for SU-CD, we start by computing the per-iteration progress taking expectation in \eqref{eq:iter_progress} for SGS-CD:



\begin{equation} \label{eq:progress_SGS-CD}
    \Exp{F(\lambda^{k+1}) \mid \lambda^k} \leq 
    F(\lambda^k) - \frac{1}{2Ln} \sum_{i=1}^n \max_{\ell \in \Scal_i} \paren{\nabla_\ell F(\lambda^k)}^2. \hspace{-4pt}
\end{equation} 


Given this per-iteration progress, to proceed as we did for SU-CD we need to show that
(i) the sum on the right-hand side of \eqref{eq:progress_SGS-CD} defines a norm, 
and (ii) strong convexity holds in its dual norm. 
We start by defining the selector matrices $T_i$, which will significantly simplify notation.

\begin{definition}[\textit{\textbf{Selector matrices}}] \label{def:selector_matrices}
    The selector matrices $T_i \in \Cbraces{0,1}^{N_i \times E}, i=1,\ldots,n$ select the coordinates of a vector in $\Rbb^E$ that belong to set $\Scal_i$. 
    Note that any vertical stack of the unitary vectors $\Cbraces{e_\ell^T}_{\ell \in \Scal_i}$ gives a valid $T_i$.
\end{definition}

We can now show that the sum in \eqref{eq:progress_SGS-CD} is a (squared) norm. 
Since the operation involves applying $\max(\cdot)$ within each set $\Scal_i$, we will denote this norm $\normSM{x}$, where the subscript SM stands for ``Set-Max".

\begin{lemma} 
    The function $\normSM{x}~\triangleq~\sqrt{\sum_{i=1}^n \norm{T_i x}_\infty^2} = \sqrt{\sum_{i=1}^n \max_{j \in \Scal_i} x_j^2}$ is a norm in $\Rbb^E$.
\end{lemma}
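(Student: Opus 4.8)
The plan is to verify directly that $\normSM{\cdot}$ satisfies the three defining properties of a norm on $\Rbb^E$: positive definiteness, absolute homogeneity, and the triangle inequality. The first two are essentially immediate from the structure of the expression, so the real work is the triangle inequality, which I expect to be the main obstacle.

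First I would dispose of \emph{positive definiteness}. Since $\normSM{x}^2 = \sum_{i=1}^n \max_{j \in \Scal_i} x_j^2$ is a sum of squares, it is nonnegative and vanishes only if $\max_{j \in \Scal_i} x_j^2 = 0$ for every $i$, i.e. $x_j = 0$ for every $j \in \Scal_i$ and every $i$. Because Definition \ref{def:setwise_CD} guarantees that every coordinate $\ell \in [E]$ belongs to at least one set $\Scal_i$, this forces $x = \zeros$. For \emph{homogeneity}, note that $\norm{T_i(\alpha x)}_\infty = \abs{\alpha}\, \norm{T_i x}_\infty$, so $\normSM{\alpha x}^2 = \alpha^2 \sum_i \norm{T_i x}_\infty^2$ and hence $\normSM{\alpha x} = \abs{\alpha}\, \normSM{x}$.

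The cleanest route to the \emph{triangle inequality} is to recognize $\normSM{\cdot}$ as a composition of norms. For each $i$, the map $x \mapsto \norm{T_i x}_\infty$ is a seminorm on $\Rbb^E$ (a genuine $\ell_\infty$-norm of the selected subvector), so in particular it is subadditive: $\norm{T_i(x+y)}_\infty \le \norm{T_i x}_\infty + \norm{T_i y}_\infty$. Collecting these $n$ quantities into a vector, define $\Phi(x) = (\norm{T_1 x}_\infty, \ldots, \norm{T_n x}_\infty) \in \Rbb^n$; then $\normSM{x} = \normtwo{\Phi(x)}$. The key observation is that $\Phi$ maps into the nonnegative orthant and is subadditive componentwise, while the outer $\ell_2$-norm is both a genuine norm and \emph{monotone} on the nonnegative orthant (if $0 \le a \le b$ entrywise then $\normtwo{a} \le \normtwo{b}$). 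Combining monotonicity with componentwise subadditivity and then the ordinary triangle inequality for $\normtwo{\cdot}$ gives
\begin{equation*}
    \normSM{x+y} = \normtwo{\Phi(x+y)} \le \normtwo{\Phi(x) + \Phi(y)} \le \normtwo{\Phi(x)} + \normtwo{\Phi(y)} = \normSM{x} + \normSM{y}.
\end{equation*}

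The one step requiring a moment of care is the first inequality above, where I replace $\Phi(x+y)$ by $\Phi(x)+\Phi(y)$ under $\normtwo{\cdot}$: this is legitimate precisely because each component of $\Phi(x+y)$ is bounded above by the corresponding component of $\Phi(x)+\Phi(y)$ (componentwise subadditivity) and because the $\ell_2$-norm is monotone with respect to the entrywise order on nonnegative vectors. I would state this monotonicity explicitly as it is the crux that makes the composition-of-norms argument go through; everything else is routine. An equivalent and perhaps even shorter phrasing is to note that $\normSM{\cdot}$ is an instance of a mixed $\ell_2$--$\ell_\infty$ norm over the (possibly overlapping) index sets $\Scal_i$, and such mixed norms are standard, but I would prefer the self-contained monotonicity argument so the proof does not rely on citing the general mixed-norm fact.
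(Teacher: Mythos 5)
Your proof is correct, and it takes a genuinely different route to the triangle inequality than the paper does. The paper argues directly on the squares: it invokes $\max_{j \in \Scal_i}\paren{x_j^2 + y_j^2} \leq \max_{j \in \Scal_i} x_j^2 + \max_{j \in \Scal_i} y_j^2$ together with $\sqrt{a+b} \leq \sqrt{a} + \sqrt{b}$. Taken literally, those two facts are not enough: to relate $\max_{j}(x_j+y_j)^2$ to $\max_j \paren{x_j^2 + y_j^2}$ one must pay a factor $(x_j+y_j)^2 \leq 2(x_j^2+y_j^2)$, and chasing the paper's two inequalities through then yields only the quasi-norm bound $\normSM{x+y} \leq \sqrt{2}\,\paren{\normSM{x} + \normSM{y}}$ rather than the constant-one triangle inequality the lemma asserts (a complete direct argument would additionally need Cauchy--Schwarz on the cross term, which the paper does not mention). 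Your decomposition sidesteps this entirely: writing $\normSM{x} = \normtwo{\Phi(x)}$ with $\Phi(x) = (\norm{T_1 x}_\infty, \ldots, \norm{T_n x}_\infty)$, you work at the level of first powers, where subadditivity is exact ($\norm{T_i(x+y)}_\infty \leq \norm{T_i x}_\infty + \norm{T_i y}_\infty$), and then monotonicity of $\normtwo{\cdot}$ on the nonnegative orthant plus Minkowski's inequality finish the job. This is the standard composition-of-norms argument for mixed $\ell_2$--$\ell_\infty$ norms; it is more verbose than the paper's two-line sketch, but it is airtight and makes explicit the monotonicity step on which the whole construction rests. Your handling of positive definiteness (using that every coordinate belongs to at least one set $\Scal_i$) and homogeneity matches what the paper dismisses as straightforward.
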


\begin{proof}
    Using $\max_{j \in \Scal_i} \paren{x_j^2 {+} y_j^2} {\leq} \max_{j \in \Scal_i} x_j^2 {+} \max_{j \in \Scal_i} y_j^2$ 
    and $\sqrt{a+b} \leq \sqrt{a} + \sqrt{b}$ we can show that $\normSM{\cdot}$ satisfies the triangle inequality.
    It is straightforward to show that $\normSM{\alpha x} = \abs{\alpha} \normSM{x}$ and 
    $\normSM{x} = 0$ if and only if $x = \zeros$.
\end{proof}

Following the proof of Theorem \ref{theo:rate_SU-CD}, we would like to show that $F$ is strongly convex in the dual norm $\normSM{\cdot}^*$. 
Furthermore, we would like to compare the strong convexity constant $\sigma_\SM$ with $\sigma_A$ to quantify the speedup of SGS-CD with respect to SU-CD. 
It turns out, though, that computing $\normSM{\cdot}^*$ is not easy at all; the main difficulty stems from the fact that the sets $\Scal_i$ are overlapping (or non-disjoint), since each coordinate $\ell \equiv (i,j)$ belongs to both $\Scal_i$ and $\Scal_j$. The first scheme in Figure \ref{fig:example_sets} illustrates this fact for the 3-node clique. 

\begin{figure}[t]
    \centering
    \includegraphics[trim={13cm 6cm 32cm 7cm}, clip, width=0.6\linewidth]{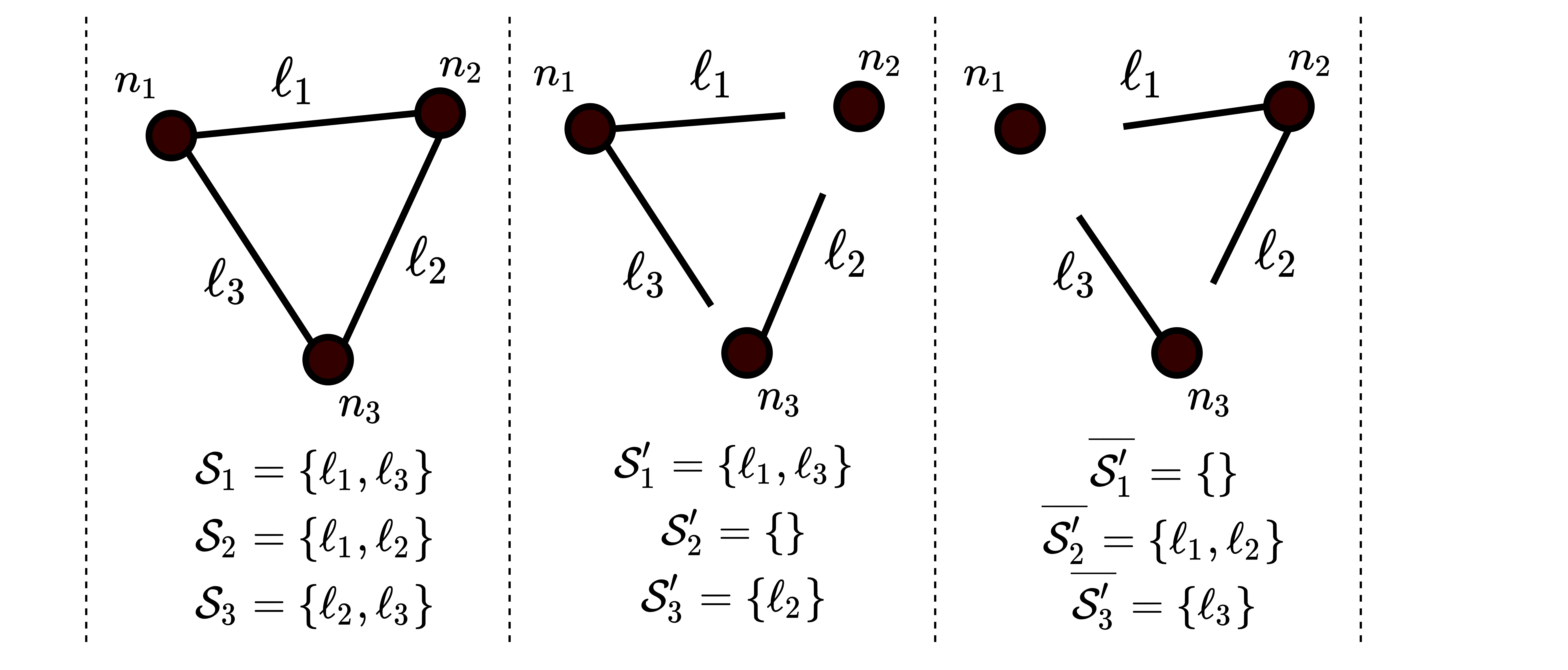}
    \caption{Example of sets $\Scal_i$ and \emph{one possibility} for $\Scal_i'$ and $\overline{\Scal_i'}$}
    \label{fig:example_sets}
\end{figure}

To circumvent this issue, we define a new norm  $\normSMNO{\cdot}^*$ (``Set-Max Non-Overlapping") that we can directly relate to  $\normSM{\cdot}^*$ (Lemma \ref{lemma:ineq_SM_SMNO}) and whose value we can compute explicitly (Lemma \ref{lemma:value_norm_SMNO}), which will later allow us to relate the three strong convexity constants  $\sigma_\SM, \sigma_\SMNO$, and $\sigma_A$ (Theorem \ref{theo:rate_SGS-CD}).

\begin{definition}[\textit{\textbf{Norm $\normSMNO{\cdot}^*$}}] \label{def:norm_SMNO}   
    We assume that each coordinate $\ell \equiv (i,j)$ is assigned to only one of the sets $\Scal_i' \subseteq \Scal_i$ or
    $\Scal_j' \subseteq \Scal_j$, such that the new sets $\Cbraces{\Scal_i'}_{i=1}^n$ are non-overlapping (some sets can be empty), and all coordinates $\ell$ belong to \textit{exactly one} set in $\Cbraces{\Scal_i'}$. 
    We name the selector matrices of these new sets $T_i'$, so that each possible choice of $\Cbraces{\Scal_i'}$ defines a different set $\{T_i'\}$.
    Then, we define
    \begin{equation} \label{eq:def_dual_norm_SMNO}
         \normSMNO{z}^* = \sup_{x} \Cbraces{ z^T x \; \biggr\rvert 
         \sqrt{ \sum_{i=1}^n \norm{T_i^* x}_\infty^2} \leq 1 },
    \end{equation}
    with the choice of non-overlapping sets
    \begin{equation} \label{eq:optimal_Ti}
         \{T_i^*\} = 
         \arg \max_{ \{T_i'\} } \; \sum_{i=1}^n \norm{T_i' x}_\infty^2.
    \end{equation}
    
    Note that the maximizations in \eqref{eq:def_dual_norm_SMNO} and \eqref{eq:optimal_Ti} are coupled. 
    We denote the value of $x$ that attains \eqref{eq:def_dual_norm_SMNO} by $x_{\SMNO}^*$.
\end{definition}
    
The definition of sets $\Scal_i'$ corresponds to assigning each edge $\ell$ to one of the two nodes at its endpoints, as illustrated in the second scheme of Figure~\ref{fig:example_sets}. 
Therefore, for each possible pair $\paren{\Cbraces{\Scal_h'}, \Cbraces{T_h'}}, h \in [n]$ we can define a complementary pair $(\{\overline{\Scal_h'}\}, \{\overline{T_h'}\})$ 
such that if $\ell \equiv (i,j)$ was assigned to $\Scal_i'$ in $\Cbraces{\Scal_h'}$, then it is assigned to $\overline{\Scal_j'}$ in $\{\overline{\Scal_h'}\}$.
This corresponds to assigning $\ell$ to the opposite endpoint (node) to the one originally chosen, as shown in the third scheme of Figure~\ref{fig:example_sets}.
With these definitions, it holds (potentially with some permutation of the rows):
\[ T_i = \begin{bmatrix} T_i' \\ \overline{T_i'} \end{bmatrix} =
    \begin{bmatrix} T_i' \\ \zeros \end{bmatrix} + 
    \begin{bmatrix} \zeros \\ \overline{T_i'} \end{bmatrix}, \; i=1,\ldots,n. \]

We remark that the equality above holds for \emph{any} $\{T_i'\}$ corresponding to a feasible  assignment $\Cbraces{\Scal_i'}$, and in particular it hols for $(\Cbraces{\Scal_i^*}, \{T_i^*\})$.   
This fact is used in the proof of the following lemma, which relates norms $\normSM{\cdot}^*$ and $\normSMNO{\cdot}^*$. 
This will allow us to complete the analysis with $\normSMNO{\cdot}^*$, which we can compute explicitly (Lemma \ref{lemma:value_norm_SMNO}).

\begin{lemma} \label{lemma:ineq_SM_SMNO}
    The dual norm of $\normSM{\cdot}$, denoted $\normSM{\cdot}^*$, satisfies
    $\frac{1}{2} \paren{\normSMNO{z}^*}^2 \leq \paren{\normSM{z}^*}^2 \leq \paren{\normSMNO{z}^*}^2$.
\end{lemma}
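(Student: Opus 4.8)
The plan is to reduce the claimed inequality between the \emph{dual} norms to a sandwich inequality between the corresponding \emph{primal} norms, and then invoke the order-reversing behaviour of duality. Write $\normSMNO{x} = \sqrt{\sum_{i=1}^n \norm{T_i^* x}_\infty^2} = \sqrt{\max_{\{T_i'\}} \sum_{i=1}^n \norm{T_i' x}_\infty^2}$ for the primal norm whose dual appears in \eqref{eq:def_dual_norm_SMNO}, the second equality being just the definition \eqref{eq:optimal_Ti} of the maximizing assignment $\{T_i^*\}$. I would first establish the primal bounds
\[ \tfrac{1}{2}\normSM{x}^2 \;\le\; \normSMNO{x}^2 \;\le\; \normSM{x}^2 \qquad \forall x \in \Rbb^E, \]
and only afterwards pass to the dual norms.

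For the upper bound (the easy direction), I would use that every feasible assignment satisfies $\Scal_i' \subseteq \Scal_i$, so for each $i$ we have $\norm{T_i' x}_\infty^2 = \max_{\ell \in \Scal_i'} x_\ell^2 \le \max_{\ell \in \Scal_i} x_\ell^2 = \norm{T_i x}_\infty^2$. Summing over $i$ shows that \emph{every} assignment gives a sum bounded by $\normSM{x}^2$; taking the maximum over assignments then yields $\normSMNO{x}^2 \le \normSM{x}^2$.

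For the lower bound (the crux), I would fix any feasible assignment $\{\Scal_i'\}$ together with its complement $\{\overline{\Scal_i'}\}$, so that $\Scal_i = \Scal_i' \sqcup \overline{\Scal_i'}$ for every $i$. Then, with the convention that the maximum over an empty set is $0$,
\[ \max_{\ell \in \Scal_i} x_\ell^2 = \max\paren{\max_{\ell \in \Scal_i'} x_\ell^2,\; \max_{\ell \in \overline{\Scal_i'}} x_\ell^2} \le \max_{\ell \in \Scal_i'} x_\ell^2 + \max_{\ell \in \overline{\Scal_i'}} x_\ell^2, \]
and summing over $i$ gives $\normSM{x}^2 \le \sum_i \norm{T_i' x}_\infty^2 + \sum_i \norm{\overline{T_i'} x}_\infty^2$. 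The key observation is that \emph{both} $\{\Scal_i'\}$ and its complement are themselves feasible non-overlapping assignments covering every edge exactly once, so each of the two sums is at most $\normSMNO{x}^2 = \max_{\{T_i'\}} \sum_i \norm{T_i' x}_\infty^2$. Hence $\normSM{x}^2 \le 2\,\normSMNO{x}^2$.

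Finally, the two primal bounds read $\tfrac{1}{\sqrt 2}\normSM{x} \le \normSMNO{x} \le \normSM{x}$. By the standard order-reversing property of dual norms --- if $\alpha\norm{x}_a \le \norm{x}_b \le \beta\norm{x}_a$ then $\tfrac{1}{\beta}\norm{z}_a^* \le \norm{z}_b^* \le \tfrac{1}{\alpha}\norm{z}_a^*$, which follows at once from the unit-ball inclusions in the definition \eqref{eq:dual_norm_AA}-style supremum of the dual norm --- I would conclude $\normSM{z}^* \le \normSMNO{z}^* \le \sqrt 2\,\normSM{z}^*$, and squaring gives exactly $\tfrac{1}{2}\paren{\normSMNO{z}^*}^2 \le \paren{\normSM{z}^*}^2 \le \paren{\normSMNO{z}^*}^2$. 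The main obstacle is the lower bound: one must recognize that splitting each set-max across an assignment and its complement costs at most a factor of two, and crucially that an assignment \emph{and} its complement are \emph{simultaneously} dominated by the single maximized quantity $\normSMNO{x}^2$. Pinning down this constant $2$ (rather than a worst-case $N_{\max}$) is precisely what keeps the SGS-CD guarantee competitive, so it is the heart of the argument.
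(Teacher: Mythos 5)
Your proof is correct, and it reaches the lemma by a genuinely cleaner route than the paper, even though the combinatorial heart is the same. Both arguments rest on the identical key observation: splitting each overlapping set as $\Scal_i = \Scal_i' \cup \overline{\Scal_i'}$, noting that an assignment \emph{and} its complement are each feasible non-overlapping assignments, and thus bounding the overlapping set-max sum by twice the maximized non-overlapping one --- this is exactly the paper's step \eqref{eq:ineq_Ti}--\eqref{eq:max_over_Ti}. Where you diverge is in how this is converted into a statement about dual norms: the paper works directly on the dual side, evaluating the inequality at the maximizers $x_{\SM}^*$ and $x_{\SMNO}^*$ of the two suprema \eqref{eq:def_norm_dual_SM} and \eqref{eq:def_dual_norm_SMNO} and inferring entrywise magnitude comparisons between these optimizers (e.g.\ $\abs{[x_{\SM}^*]_\ell} \leq \abs{[x_{\SMNO}^*]_\ell}$ for all $\ell$), whereas you first establish the primal sandwich $\tfrac{1}{2}\normSM{x}^2 \leq \paren{\max_{\{T_i'\}} \sum_i \norm{T_i' x}_\infty^2} \leq \normSM{x}^2$ and then invoke the order-reversing property of duality via unit-ball inclusions. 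Your packaging buys rigor: the paper's entrywise comparison of optimizers is heuristic (maximizers of two different constrained problems need not be comparable coordinate by coordinate, and the suprema need not even be attained at a common sign pattern), while the unit-ball inclusion argument is airtight and requires only checking that the non-overlapping quantity is itself a norm (homogeneity and the triangle inequality follow from Minkowski applied at the maximizing assignment). What the paper's version buys in exchange is that it exhibits explicit (scaled) feasible points, which is the same mechanism your duality lemma hides inside its proof; substantively the two proofs carry the same information, and your constant $2$ matches the paper's exactly.
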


\begin{proof}
    By definition 
    \begin{equation} \label{eq:def_norm_dual_SM}
    \normSM{z}^* = \sup_x \Cbraces{ z^T x \; \biggr\rvert \sqrt{ \sum_{i=1}^n \norm{T_i x}_\infty^2 } \leq 1 }. 
    \end{equation}

    By inspection we can tell that the $x$ that attains the supremum, denoted $x_{\SM}^*$, will satisfy 
    $\sum_{i=1}^n \norm{T_i x_{\SM}^*}_\infty^2 = 1$.
    Similarly, $x_{\SMNO}^*$ (defined under \eqref{eq:optimal_Ti}) must satisfy
    $\sum_{i=1}^n \norm{T_i^* x_{\SMNO}^*}_\infty^2 = 1$. 
    Note that in these two equalities the $\{T_i\}$ are overlapping sets and the $\{T_i^*\}$ are non-overlapping. 
    Therefore, in order to satisfy both equalities it must hold that 
    $|[x_{\SM}^*]_\ell| \leq |[x_{\SMNO}^*]_\ell| , \ell \in [E]$, i.e. the magnitude of the entries of $x_{\SMNO}^*$ are equal or larger than the magnitudes of the corresponding entries of $x_{\SM}^*$.
    Referring to the definitions \eqref{eq:def_dual_norm_SMNO} and \eqref{eq:def_norm_dual_SM}, this means that $\normSM{z}^* \leq \normSMNO{z}^*$.
    
    We now proceed to show the first inequality in the lemma. We note that      
    \begin{equation} \label{eq:ineq_Ti}
        \sum_{i=1}^n \norm{T_i x}_\infty^2 
        = \sum_{i=1}^n \norm{\begin{bmatrix} T_i' \\ \zeros \end{bmatrix} x + 
        \begin{bmatrix} \zeros \\ \overline{T_i'} \end{bmatrix} x}_\infty^2
        \leq \sum_{i=1}^n \norm{T_i' x}_\infty^2 + \sum_{i=1}^n \norm{\overline{T_i'} x}_\infty^2
        \leq 2 \sum_{i=1}^n \norm{\widehat{T_i'} x}_\infty^2,
    \end{equation}    
    with 
    \begin{equation} \label{eq:max_over_Ti}
        \{\widehat{T_i'}\} = \arg\max_{\{T_i'\},\{\overline{T_i'}\}} 
    \paren{\sum_{i=1}^n \norm{T_i' x}_\infty^2, \sum_{i=1}^n \norm{\overline{T_i'} x}_\infty^2}.
    \end{equation}

    We now evaluate \eqref{eq:ineq_Ti} and \eqref{eq:max_over_Ti} at $x_{\SMNO}^*$. 
    Due to \eqref{eq:optimal_Ti} we have $\{\widehat{T_i'}\} = \{T_i^*\}$, and since  $\sum_{i=1}^n \norm{T_i^* x_{\SMNO}^*}_\infty^2 = 1$, the rightmost member of \eqref{eq:ineq_Ti} takes value 2.
    Then, dividing both sides of \eqref{eq:ineq_Ti} by 2 we obtain     
    \[ \frac{1}{2} \sum_{i=1}^n \norm{T_i x_{\SMNO}^*}_\infty^2 = 
    \sum_{i=1}^n \norm{T_i \frac{x_{\SMNO}^*}{\sqrt{2}}}_\infty^2 \leq 1, \]
    and since $\sum_{i=1}^n \norm{T_i x_{\SM}^*}_\infty^2 = 1$, we conclude that it must hold that 
    $\frac{1}{\sqrt{2}} |[x_{\SMNO}^*]_\ell| \leq |[x_{\SM}^*]_\ell|, \ell \in [E]$, 
    and thus $\frac{1}{\sqrt{2}} \normSMNO{z}^* \leq \normSM{z}^*$.
\end{proof}

The next lemma gives the value of $\normSMNO{x}^*$ explicitly, which will be needed to compare the strong convexity constant $\sigma_{\SMNO}$ with $\sigma_A$.

\begin{lemma} \label{lemma:value_norm_SMNO}
    It holds that $\normSMNO{x}^* = \sqrt{\sum_{i=1}^n \norm{T_i^* x}_1^2}$.
\end{lemma}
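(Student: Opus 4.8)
The plan is to compute the dual norm directly from its definition \eqref{eq:def_dual_norm_SMNO} as a constrained supremum, sandwiching $\normSMNO{z}^*$ (I write the argument as $z$ to match \eqref{eq:def_dual_norm_SMNO}, so the sup variable stays $x$) between a Hölder/Cauchy--Schwarz upper bound and a matching construction that attains it. The starting observation, which is the entire reason for introducing $\normSMNO{\cdot}^*$ in place of $\normSM{\cdot}^*$, is that the sets $\Scal_i^*$ selected by $\{T_i^*\}$ are \emph{non-overlapping} and cover $[E]$, hence they partition the coordinates. This lets me split the inner product cleanly as $z^T x = \sum_{i=1}^n (T_i^* z)^T (T_i^* x)$, something the overlapping sets $\{T_i\}$ did not allow.

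For the upper bound I would bound each group term by the $\ell_1$--$\ell_\infty$ Hölder inequality, $(T_i^* z)^T (T_i^* x) \le \norm{T_i^* z}_1 \norm{T_i^* x}_\infty$, and then apply Cauchy--Schwarz across the $n$ groups to get $z^T x \le \sqrt{\sum_i \norm{T_i^* z}_1^2}\,\sqrt{\sum_i \norm{T_i^* x}_\infty^2}$. Since any feasible $x$ in \eqref{eq:def_dual_norm_SMNO} satisfies $\sqrt{\sum_i \norm{T_i^* x}_\infty^2}\le 1$, the second factor is at most $1$, and taking the supremum over feasible $x$ yields $\normSMNO{z}^* \le \sqrt{\sum_i \norm{T_i^* z}_1^2}$.

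For the reverse inequality I would exhibit an explicit maximizer. On each set $\Scal_i^*$ I set the entries of $x$ to a common magnitude with signs aligned to $z$, i.e. $x_\ell = t_i\,\sign(z_\ell)$ for $\ell \in \Scal_i^*$, and choose $t_i \propto \norm{T_i^* z}_1$ normalized so that $\sum_i t_i^2 = 1$. The sign alignment makes the per-group Hölder inequality tight, and the proportional choice of the $t_i$ makes the across-group Cauchy--Schwarz tight, so that $z^T x = \sqrt{\sum_i \norm{T_i^* z}_1^2}$, which closes the sandwich and gives the claimed equality.

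The step I expect to be the main obstacle — rather than the Hölder and Cauchy--Schwarz manipulations, which are routine — is verifying that this constructed $x$ is genuinely \emph{feasible}. Feasibility is \emph{not} merely $\sum_i \norm{T_i^* x}_\infty^2 \le 1$ for the partition $\{T_i^*\}$ (which holds by construction); because of the coupling in \eqref{eq:optimal_Ti}, one must ensure that $\{T_i^*\}$ really is the partition \emph{maximizing} $\sum_i \norm{T_i' x}_\infty^2$ at the constructed point, so that reassigning any edge to its opposite endpoint cannot inflate the constraint above $1$. This is exactly the self-consistency between the maximizing assignment $\{T_i^*\}$ and the optimizer $x_{\SMNO}^*$ that the definition builds in, and I would discharge it with an exchange argument on single edges: moving one edge between the two endpoint sets alters only two group maxima, and the defining optimality of $\{T_i^*\}$ must be shown to forbid an increase. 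Getting this consistency to hold is the delicate point on which the exact value, and hence the comparison of $\sigma_{\SMNO}$ with $\sigma_A$ in Theorem~\ref{theo:rate_SGS-CD}, ultimately rests.
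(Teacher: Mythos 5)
Your two-sided argument is, in substance, the paper's own proof written as a sandwich rather than as an optimizer characterization: the paper likewise uses the non-overlap of the $\Scal_i^*$ to decouple the supremum across sets, argues the maximizer is sign-aligned with $z$ and has a common magnitude $x^{(i)}$ on each $\Scal_i^*$ (your $t_i$), and then identifies the residual problem over $(x^{(1)},\ldots,x^{(n)})$ as the L2 dual-norm pairing $\sup\{y^T w : \normtwo{w}\le 1\}=\normtwo{y}$ with $y_i=\norm{T_i^* z}_1$ --- which is exactly your across-group Cauchy--Schwarz step together with its equality case. So the H\"older/Cauchy--Schwarz upper bound plus the attaining construction is not a different route; it is a slightly more self-contained rendering of the same computation.

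Where you diverge is the ``delicate point'' you flag, and there the comparison is instructive: the paper's proof simply does not engage with it. When the paper reduces the constraint to $\sqrt{\sum_i (x^{(i)})^2}\le 1$, it has tacitly replaced the coupled constraint of Definition \ref{def:norm_SMNO} by the constraint for the \emph{fixed} partition $\{T_i^*\}$; under that fixed-partition reading your constructed point is feasible by inspection, the exchange argument is unnecessary, and your proof is complete. Moreover, you should not attempt to supply that argument, because under the literal coupled reading (feasible set $\{x \,:\, \max_{\{T_i'\}}\sum_{i=1}^n \norm{T_i' x}_\infty^2 \le 1\}$) the self-consistency you want is false. Concretely, take the complete graph on four nodes, $z=\ones$, and the partition $\Scal_1^*=\{(1,2)\}$, $\Scal_2^*=\{(2,3)\}$, $\Scal_3^*=\{(1,3)\}$, $\Scal_4^*=\{(1,4),(2,4),(3,4)\}$. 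Your construction puts magnitude $1/\sqrt{12}$ on the first three edges and $3/\sqrt{12}$ on the last three, so the fixed-partition constraint equals $1$; but moving the single edge $(1,4)$ into $\Scal_1^*$ already raises the constraint to $20/12>1$, and the reassignment $\{(1,2),(1,4)\},\{(2,3),(2,4)\},\{(1,3),(3,4)\},\emptyset$ gives $27/12$. Hence the constructed point is infeasible for the coupled constraint, single-edge exchanges \emph{do} inflate the sum, and in fact the coupled supremum in this instance equals $3$ (attained at $\tfrac{1}{2}\ones$), strictly below the claimed value $\sqrt{\sum_i \norm{T_i^*\ones}_1^2}=\sqrt{12}$, so the identity itself fails under that reading. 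In short, the gap you identified is real, but it is a defect of the coupled definition rather than of your construction: the lemma is true --- and is proved, both by you and by the paper --- only with the partition held fixed, and your write-up should state that reading and drop the flagged step rather than try to discharge it.
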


\begin{proof}
    Since the sets $\{\Scal_i^*\}$ are non-overlapping and in \eqref{eq:def_dual_norm_SMNO} norm $\norm{\cdot}_\infty$ is applied per-set, the entries $x_\ell$ of $x_{\SMNO}^*$ will have $\abs{x_\ell} = x^{(i)} \geq 0 \; \forall \; \ell \in \Scal_i^*$ and the sign will match that of the entries of $z$, i.e. $\sign(x_\ell) = \sign(z_\ell)$. 
    The maximization of \eqref{eq:def_dual_norm_SMNO} then becomes 
     
    \begin{equation*}
        \begin{aligned}
        & \maximize_{\Cbraces{x^{(i)}}} && \sum_{i=1}^n \sum_{\ell \in \Scal_i^*}  \paren{ \abs{z_\ell} \cdot x^{(i)} } \\
        & \st && \sqrt{\sum_{i=1}^n \paren{x^{(i)}}^2} \leq 1.
        \end{aligned}
    \end{equation*}
    
    Factoring out $x^{(i)}$ in the objective and noting that 
    $\sum_{\ell \in \Scal_i^*} \abs{z_\ell} = \norm{T_i^* z}_1$, we can define now both 
    $w = [x^{(1)}, \ldots, x^{(n)}]^T$ and $y = \Sbraces{ \norm{T_1^* z}_1, \ldots, \norm{T_n^* z}_1 }^T$ 
    so that \eqref{eq:def_dual_norm_SMNO} reads
     
    \[ \normSMNO{z}^* = \sup_w \Cbraces{ y^T w \; \biggr\rvert \normtwo{w} \leq 1}. \]
    
    The right-hand side is the definition of $\normtwo{\cdot}^*$, the dual of the L2 norm, evaluated at $y$. Since $\normtwo{\cdot}^* = \normtwo{\cdot}$, we have that
    $ \normSMNO{z}^* = \normtwo{y} = \sqrt{\sum_{i=1}^n \norm{T_i^* z}_1^2} $.        
\end{proof}

We can now prove the linear convergence rate of SGS-CD.

\begin{theorem}[\textit{\textbf{Rate of SGS-CD}}] \label{theo:rate_SGS-CD}
    SGS-CD converges as
    \begin{equation*} 
         \Exp{F(\lambda^{k+1}) \mid \lambda^k} - F(\lambda^*) \leq
        \paren{1 - \frac{\sigma_{\SM}}{Ln}} \Sbraces{F(\lambda^k) - F(\lambda^*)}, 
    \end{equation*}
    with 
    \begin{equation} \label{eq:ineq_sigmaSM_sigmaA}
      \frac{\sigma_A}{N_{\max}} \leq \sigma_{\SM} \leq 2 \sigma_A.  
    \end{equation}
\end{theorem}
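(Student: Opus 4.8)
My plan is to mirror the proof of Theorem~\ref{theo:rate_SU-CD}, replacing the role of $\normtwo{\cdot}$ and $\normAA{\cdot}$ by the Set-Max norm and its dual. First I would rewrite the per-iteration progress \eqref{eq:progress_SGS-CD} in norm notation, observing that $\sum_{i=1}^n \max_{\ell\in\Scal_i}\paren{\nabla_\ell F(\lambda^k)}^2 = \normSM{\nabla F(\lambda^k)}^2$, so that $\Exp{F(\lambda^{k+1})\mid\lambda^k} \leq F(\lambda^k) - \frac{1}{2Ln}\normSM{\nabla F(\lambda^k)}^2$. The rate then follows exactly as in Theorem~\ref{theo:rate_SU-CD} once I have a Polyak--Łojasiewicz-type inequality $\normSM{\nabla F(\lambda)}^2 \geq 2\sigma_{\SM}\Sbraces{F(\lambda)-F(\lambda^*)}$: substituting it gives the contraction factor $1-\frac{\sigma_{\SM}}{Ln}$. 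To obtain this inequality I would take $\sigma_{\SM}$ to be the strong-convexity constant of $F$ in the norm $\normSM{\cdot}^*$ (the dual of the Set-Max norm); minimizing the strong-convexity inequality over $y$ as in \eqref{eq:guarantee_suboptim_SU}, and using that the double dual returns $\normSM{\cdot}$, yields precisely this bound, since the per-iteration term involves $\normSM{\nabla F}$ and not its dual.

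The substance of the theorem is the two-sided bound \eqref{eq:ineq_sigmaSM_sigmaA}, which I would get from a norm equivalence between $\normSM{\cdot}^*$ and $\normAA{\cdot}$ on $\range(A^T)$, together with the standard fact that strong-convexity constants transform as the inverse square of the norm-equivalence constants. Concretely, if $\alpha\normAA{w}\le\normSM{w}^*\le\beta\normAA{w}$ for $w\in\range(A^T)$, then comparing the two strong-convexity inequalities gives $\sigma_A/\beta^2 \le \sigma_{\SM} \le \sigma_A/\alpha^2$, so the task reduces to establishing $\alpha=1/\sqrt{2}$ and $\beta=\sqrt{N_{\max}}$.

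The hard part is that $\normSM{\cdot}^*$ has no easy closed form because the sets $\Scal_i$ overlap. I would route around this with the non-overlapping surrogate, i.e. through the intermediate constant $\sigma_{\SMNO}$: Lemma~\ref{lemma:ineq_SM_SMNO} sandwiches $\normSM{\cdot}^*$ between $\frac{1}{\sqrt{2}}\normSMNO{\cdot}^*$ and $\normSMNO{\cdot}^*$, and Lemma~\ref{lemma:value_norm_SMNO} evaluates $\normSMNO{z}^* = \sqrt{\sum_{i=1}^n\norm{T_i^* z}_1^2}$ over the partition $\{\Scal_i^*\}$. It then remains to compare this to $\normtwo{\cdot}$: since the $\{\Scal_i^*\}$ partition $[E]$, summing $\norm{T_i^* w}_1 \ge \norm{T_i^* w}_2$ gives $\normSMNO{w}^*\ge\normtwo{w}$, while Cauchy--Schwarz with $\abs{\Scal_i^*}\le N_{\max}$ gives $\normSMNO{w}^* \le \sqrt{N_{\max}}\,\normtwo{w}$. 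Chaining these with Lemma~\ref{lemma:ineq_SM_SMNO}, and using $\normtwo{w}=\normAA{w}$ for $w\in\range(A^T)$ (Lemma~\ref{lemma:equality_of_norms}), yields $\frac{1}{\sqrt{2}}\normAA{w}\le\normSM{w}^*\le\sqrt{N_{\max}}\,\normAA{w}$, hence $\frac{\sigma_A}{N_{\max}}\le\sigma_{\SM}\le 2\sigma_A$.

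The main obstacle I anticipate is exactly this overlap of the coordinate sets, which blocks a direct formula for $\normSM{\cdot}^*$ and forces the detour through the non-overlapping norm. A secondary but delicate point is that $F$ is flat along $\range(A^T)^\perp$ (cf. Lemma~\ref{lemma:no_SC}), so $F$ is \emph{not} strongly convex in the full norm $\normSM{\cdot}^*$; I must therefore read every norm comparison as a support/restricted statement on $\range(A^T)$, where $\nabla F$ lives, so that $\normSM{\cdot}^*$ is genuinely dominated by the seminorm $\normAA{\cdot}$ rather than clashing with it on the flat directions.
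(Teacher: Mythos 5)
Your proposal is correct and follows essentially the same route as the paper: the rate comes from strong convexity in $\normSM{\cdot}^*$ plus the double-dual identity, and the two-sided bound \eqref{eq:ineq_sigmaSM_sigmaA} is obtained via Lemmas \ref{lemma:equality_of_norms}, \ref{lemma:ineq_SM_SMNO}, and \ref{lemma:value_norm_SMNO} with the same per-set $\ell_1$-vs-$\ell_2$ and Cauchy--Schwarz comparisons. The only cosmetic difference is that you chain the norm equivalences first and convert to strong-convexity constants once, whereas the paper converts each norm inequality into a constant inequality (introducing the intermediate $\sigma_{\SMNO}$ via \eqref{eq:ineq_sigmaSMNO_sigmaA} and \eqref{eq:ineq_sigmaSMNO_sigmaSM}) and then chains the constants.
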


\begin{proof}
    Similarly to what we did for SU-CD, we can depart from the strong convexity of $F$ in the $\normSM{\cdot}$ norm:
    \[ F(y) \geq F(x) + \Abraces{\nabla F(x), y-x} + \frac{\sigma_{\SM}}{2} \paren{\normSM{y-x}^*}^2, \]
    then minimize both sides with respect to $y$ to obtain 
    \begin{equation} \label{eq:guarantee_suboptim_SM}
        F(x^*) \geq F(x) - \frac{1}{2\sigma_{\SM}} \paren{\normSM{\nabla F(x)}}^2, 
    \end{equation}
    which is analogous to \eqref{eq:guarantee_suboptim_SU}, and then rearrange terms to obtain a lower bound on $\normSM{\nabla F(\lambda)}^2$. 
    Using this lower bound in \eqref{eq:progress_SGS-CD} gives the rate of SGS-CD.

    Since this rate is given in terms of $\sigma_{\SM}$ and that of SU-CD in Theorem \ref{theo:rate_SU-CD} is given in terms of $\sigma_A$, we need \eqref{eq:ineq_sigmaSM_sigmaA} to compare both rates.
    However, we cannot prove these inequalities directly because we cannot compare norms $\normAA{\cdot}$ and $\normSM{\cdot}^*$ (due to the overlap of the coordinate sets, which prevents us from computing the latter). 
    However, we \textit{can} compare $\normAA{\cdot}$ with $\normSMNO{\cdot}^*$ and $\normSM{\cdot}^*$ with $\normSMNO{\cdot}^*$ individually, from which we will obtain \eqref{eq:ineq_sigmaSM_sigmaA}.
    In particular, we will show the inequalities 
    \begin{equation} \label{eq:ineq_sigmaSMNO_sigmaA}
      \frac{\sigma_A}{N_{\max}} \leq \sigma_{\SMNO} \leq \sigma_A
    \end{equation}
    and 
    \begin{equation} \label{eq:ineq_sigmaSMNO_sigmaSM}
        \sigma_{\SMNO} \leq \sigma_{\SM} \leq 2 \sigma_{\SMNO}.  
    \end{equation}

    We start by proving \eqref{eq:ineq_sigmaSMNO_sigmaA}.
    Below we assume $x \in \range(A^T)$; the results here can then be directly applied to the proofs above because $\normAA{\cdot}, \normSM{\cdot}, \normSMNO{\cdot}$ and their duals are applied to $\nabla F$, which is always in $\range(A^T)$ (Lemma \ref{lemma:equality_of_norms}). 

    For $x \in \range(A^T)$ it holds that 
    (Lemmas \ref{lemma:equality_of_norms} and \ref{lemma:value_norm_SMNO}):
    \begin{gather*}
        \normAA{x}^2 = \normtwo{x}^2 = \sum_{i=1}^E x_i^2 = \sum_{i=1}^n \normtwo{T_i^* x}^2 \\
         \paren{\normSMNO{x}^*}^2 = \sum_{i=1}^n \norm{T_i^* x}_1^2.
    \end{gather*}
      
    We also note that, using the Cauchy-Schwarz inequality and denoting $[v]_i$ the $i$\ts{th} entry of vector $v$, it holds both that
    \begin{gather*}
        \sum_{i=1}^n \normtwo{T_i^* x}^2 \leq 
        \sum_{i=1}^n \Bigg( \sum_{j \in \Scal_i^*} |x_j| \Bigg)^2 =
        \sum_{i=1}^n \norm{T_i^* x}_1^2, 
        \text{ and} \\
        \sum_{i=1}^n \norm{T_i^* x}_1^2 
        = \sum_{i=1}^n \paren{ \ones^T 
        \bigg[ \Big| [T_i^*x]_1 \Big| , \ldots, \Big| [T_i^*x]_{N_i^*} \Big| \bigg]^T }^2 \\
        \stackrel{\text{C.S.}}{\leq}
        \sum_{i=1}^n N_i^* \normtwo{T_i^*x}^2
        \leq N_{\max} \sum_{i=1}^n \normtwo{T_i^*x}^2,
    \end{gather*}
    where $N_i^* = \abs{\Scal_i^*}$.
    We can summarize these relations as 
    \[ \frac{1}{N_{\max}} \paren{\normSMNO{x}^*}^2
    \leq \normAA{x}^2 \leq  \paren{\normSMNO{x}^*}^2. \]
    
    Using these inequalities in the strong convexity definitions, similarly to \cite{nutini2015coordinate}, we get both
    \begin{equation} 
    \begin{aligned} \label{eq:lower_ineq} 
        &F(y) \geq F(x) + \Abraces{\nabla F(x), y-x} + \frac{\sigma_A}{2} \paren{\normAA{y-x}}^2 \\
        &\geq F(x) + \Abraces{\nabla F(x), y-x} + \frac{\sigma_A}{2N_{\max}} \paren{\normSMNO{y-x}^*}^2, 
    \end{aligned}
    \end{equation} 
    and
    \begin{equation} 
    \begin{aligned} \label{eq:higher_ineq}
        F&(y) \geq F(x) + \Abraces{\nabla F(x), y - x} + \frac{\sigma_{\SMNO}}{2} \paren{\normSMNO{y-x}^*}^2 \\
        &\geq F(x) + \Abraces{\nabla F(x), y-x} + \frac{\sigma_{\SMNO}}{2} \paren{\normAA{y - x}}^2.
    \end{aligned}
    \end{equation} 
    
    Equation \eqref{eq:lower_ineq} says that $F$ is at least $\frac{\sigma_A}{N_{\max}}$-strongly convex in $\normSMNO{\cdot}^*$, and eq. \eqref{eq:higher_ineq} says that $F$ is at least $\sigma_{\SMNO}$-strongly convex in $\normAA{\cdot}$.
    Together they imply \eqref{eq:ineq_sigmaSMNO_sigmaA}. 
    
    We can show \eqref{eq:ineq_sigmaSMNO_sigmaSM} by the same procedure applied in eqs. \eqref{eq:lower_ineq} and \eqref{eq:higher_ineq}, but now using the strong convexity of $F$ in norms $\normSM{\cdot}^*$ and $\normSMNO{\cdot}^*$ together with Lemma \ref{lemma:ineq_SM_SMNO}. 
    From $\frac{1}{2} (\normSMNO{z}^*)^2 \leq (\normSM{z}^*)^2$ we get $\frac{1}{2} \sigma_{\SM} \leq \sigma_{\SMNO}$,
    and from $(\normSM{z}^*)^2 \leq (\normSMNO{z}^*)^2$ we get $\sigma_{\SMNO} \leq \sigma_{\SM}$.

    Finally, putting \eqref{eq:ineq_sigmaSMNO_sigmaA} and \eqref{eq:ineq_sigmaSMNO_sigmaSM} together gives \eqref{eq:ineq_sigmaSM_sigmaA}.
\end{proof}

Theorems \ref{theo:rate_SU-CD} and \ref{theo:rate_SGS-CD} together allow us to compare the convergence rates of SU-CD and SGS-CD. 
We note that when $\sigma_{\SM}$ takes the upper value in \eqref{eq:ineq_sigmaSM_sigmaA}, SGS-CD is (in expectation) $N_{\max}$ times faster than SU-CD. 
The lower bound in \eqref{eq:ineq_sigmaSM_sigmaA}, on the other hand, suggests that SGS-CD could be slower than SU-CD.
We remark that (in expectation) this is not true and the lower bound is vacuous, since the following holds. 

\begin{remark} \label{rem:SGS_faster_than_SU}
For the same sequence of node activations, the suboptimality reduction of SGS-CD at each iteration is equal to or larger than that of SU-CD.
\end{remark}

Taking this fact into account, we have the following result. 

\begin{corollary} \label{cor:SGS_faster_SU}
    In expectation, SGS-CD converges at least as fast as SU-CD, and can be up to $N_{\max}$ times faster.
\end{corollary}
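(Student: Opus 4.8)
The plan is to derive the corollary directly from the per-iteration progress bounds \eqref{eq:progress_SU-CD} and \eqref{eq:progress_SGS-CD}, Theorems \ref{theo:rate_SU-CD} and \ref{theo:rate_SGS-CD}, and Remark \ref{rem:SGS_faster_than_SU}. The statement has two halves—``at least as fast'' and ``up to $N_{\max}$ times faster''—and I would treat them separately.

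For the ``up to $N_{\max}$ times faster'' half I would simply instantiate the upper bound $\sigma_{\SM} = 2\sigma_A$ from \eqref{eq:ineq_sigmaSM_sigmaA} in the rate of Theorem \ref{theo:rate_SGS-CD}. This yields the contraction factor $\paren{1 - \tfrac{2\sigma_A}{Ln}}$ for SGS-CD, whose per-iteration suboptimality-reduction coefficient $\tfrac{2\sigma_A}{Ln}$ is exactly $N_{\max}$ times the coefficient $\tfrac{2\sigma_A}{LnN_{\max}}$ of SU-CD in Theorem \ref{theo:rate_SU-CD}. Hence in the best case SGS-CD contracts the suboptimality $N_{\max}$ times faster per iteration, which is the claimed maximal speedup.

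For the ``at least as fast'' half, the naive route of comparing the two contraction factors fails: the \emph{lower} bound $\sigma_{\SM} \geq \sigma_A/N_{\max}$ in \eqref{eq:ineq_sigmaSM_sigmaA} would give SGS-CD a worse-looking factor than SU-CD, so this bound is vacuous for the comparison (as already flagged in the discussion preceding Remark \ref{rem:SGS_faster_than_SU}). Instead I would bypass the rate factors and compare the conditional expected per-iteration progress directly. Conditioned on the current iterate $\lambda^k$, the expected reductions are $\tfrac{1}{2Ln}\sum_{i=1}^n \tfrac{1}{N_i}\sum_{\ell \in \Scal_i}\paren{\nabla_\ell F(\lambda^k)}^2$ for SU-CD (from \eqref{eq:progress_SU-CD}) and $\tfrac{1}{2Ln}\sum_{i=1}^n \max_{\ell \in \Scal_i}\paren{\nabla_\ell F(\lambda^k)}^2$ for SGS-CD (from \eqref{eq:progress_SGS-CD}). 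Since for every node $i$ the maximum dominates the average, $\max_{\ell \in \Scal_i}\paren{\nabla_\ell F(\lambda^k)}^2 \geq \tfrac{1}{N_i}\sum_{\ell \in \Scal_i}\paren{\nabla_\ell F(\lambda^k)}^2$, the SGS-CD reduction dominates the SU-CD one term by term at \emph{every} common point. Coupling the two runs through a shared node-activation sequence as in Remark \ref{rem:SGS_faster_than_SU} and arguing by induction on $k$ then shows that the suboptimality of SGS-CD never exceeds that of SU-CD, so SGS-CD converges at least as fast.

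The main obstacle is precisely this ``at least as fast'' direction: because the lower bound on $\sigma_{\SM}$ renders the rate-factor comparison inconclusive, the argument must be made at the level of per-iteration progress rather than at the level of the closed-form rates. The delicate point is that the two algorithms produce different iterate trajectories once their neighbor choices diverge, so the term-by-term dominance of the conditional expected reduction must be combined with the coupling of Remark \ref{rem:SGS_faster_than_SU} (same activation sequence) and an inductive comparison to transfer the per-step dominance into a global statement about convergence speed.
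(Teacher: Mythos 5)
Your proposal follows essentially the same route as the paper: the $N_{\max}$ speedup is obtained exactly as the paper does, by instantiating the upper value $\sigma_{\SM} = 2\sigma_A$ from \eqref{eq:ineq_sigmaSM_sigmaA} in Theorem \ref{theo:rate_SGS-CD} and comparing with Theorem \ref{theo:rate_SU-CD}, while the ``at least as fast'' direction is precisely the content of Remark \ref{rem:SGS_faster_than_SU} (per-iteration progress dominance under a shared activation sequence), for which your observation that $\max_{\ell \in \Scal_i}\paren{\nabla_\ell F}^2 \geq \tfrac{1}{N_i}\sum_{\ell \in \Scal_i}\paren{\nabla_\ell F}^2$ is the justification the paper leaves implicit. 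One caveat: the induction-on-$k$ claim that the SGS-CD trajectory's suboptimality never exceeds SU-CD's is neither needed nor airtight (once the iterates diverge you only have one-sided bounds at two different points), but the per-point dominance of the guaranteed expected progress is all that the corollary, and the paper, actually rely on.
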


Note that this result is analogous to that of \cite{nutini2015coordinate} for single-machine CD, where they show that the GS rule can be up to $d$ times faster than uniform sampling, $d$ being the dimensionality of the problem.

We remark that achieving the upper bound of $N_{\max}$ speedup may require designing a scenario particularly favorable to SGS-CD with respect to SU-CD. 
Similarly, having both algorithms converge at the same speed requires a particularly adversarial setting, but this is not hard to design: if all $f_i$ are identical and the graph is complete, no particular benefits can be expected from SGS-CD over SU-CD.

In our simulations of Section \ref{sec:numerical_results} for the decentralized setting, SGS-CD achieves a speedup approximately in the middle of the range between 1 and $N_{\max}$. 
We show that this speedup \textit{increases linearly with $N_{\max}$}, achieving remarkable gains in terms of suboptimality reduction versus number of iterations (see Figure \ref{fig:decen_and_distrib}). 
Furthermore, in the same figure we show that for the parallel distributed setting \textit{the maximum speedup of $N_{\max}$ is attainable}. 
We explain this further in Section \ref{sec:parallel_distributed}.

%
%

\section{Setwise Lipschitz CD Algorithms} \label{sec:algos_with_Lips}

In Section \ref{sec:algorithms} we stated that the dual function $F$ is $L$-smooth and therefore a sufficient condition for the setwise algorithms to converge was using stepsize $\eta = 1/L$. 
However, the updates of some (and maybe \textit{many}) coordinates could use larger stepsizes by exploiting the fact $F$ has coordinate-wise smoothness $L_\ell \leq L$, i.e. for $\alpha \in \Rbb$:
\begin{equation} \label{eq:coord_Lips_constants}
    \abs{\nabla_\ell F(\lambda + \alpha e_\ell) - \nabla_\ell F(\lambda)} \leq L_\ell \alpha.
\end{equation}

Therefore, when the  coordinate-wise Lipschitz constants $L_\ell$ are known or can be estimated (see Section \ref{ref:L_estimation}) we can apply the update \eqref{eq:CD_update} with stepsize $\eta^k = 1/L_\ell$, with $\ell$ being the coordinate updated at iteration $k$. 

In the sections that follow we show that by using the knowledge (or estimation) of the coordinate-wise Lipschitz constants and per-coordinate stepsizes we can have:
\begin{enumerate}
    \item An algorithm that has randomized but non-uniform neighbor selection that is provably faster than SU-CD. We call this algorithm Setwise Lipschitz CD (SL-CD).
    \item An algorithm that applies locally the Gauss-Southwell Lipschitz rule \cite{nutini2015coordinate} and that converges provably faster than both SL-CD and SGS-CD. We call this algorithm Setwise GS Lipschitz CD (SGSL-CD).
\end{enumerate}

Once again, while the seminal work of \cite{nutini2015coordinate} has analyzed both of these rules in the context of single-machine coordinate descent, their adaptation to setwise CD brings important new challenges. 
In this section, we prove that SL-CD is at least as fast as SU-CD, and that SGSL-CD is at least as fast as the fastest algorithm between SGS-CD and SL-CD. 

In the proofs that follow we will use the following fact. 
\begin{fact} \label{fact:dual_norm_of_per-coord_prod}
    Denote $a \circ b$ the per-entry product of vectors $a$ and $b$. Then, for any norm $\norm{\cdot}$ and finite $a: a_i > 0 \; \forall i$, if we define $\norm{x}_a := \norm{a \circ x}$, then $\norm{x}_a^* := \norm{a_{-1} \circ x}^*$ with $a_{-1} = [\frac{1}{a_1},\ldots,\frac{1}{a_d}]$.
\end{fact}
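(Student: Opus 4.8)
The plan is to prove Fact~\ref{fact:dual_norm_of_per-coord_prod} by working directly from the definition of the dual norm and performing a change of variables that absorbs the per-entry scaling vector $a$. Recall that by definition $\norm{x}_a^* = \sup_y \Cbraces{ y^T x \mid \norm{y}_a \leq 1 }$, where $\norm{y}_a = \norm{a \circ y}$. First I would introduce the substitution $z = a \circ y$, i.e. $z_i = a_i y_i$; since every $a_i > 0$ and is finite, this is a bijection on $\Rbb^d$ with inverse $y_i = z_i / a_i$, equivalently $y = a_{-1} \circ z$ with $a_{-1} = [\tfrac{1}{a_1},\ldots,\tfrac{1}{a_d}]$.

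The key step is then to rewrite both the constraint and the objective in terms of $z$. The constraint $\norm{y}_a \leq 1$ becomes $\norm{a \circ y}\leq 1$, which is exactly $\norm{z} \leq 1$. For the objective, I would compute
\[
    y^T x = \sum_{i=1}^d y_i x_i = \sum_{i=1}^d \frac{z_i}{a_i} x_i = \sum_{i=1}^d z_i \paren{\frac{x_i}{a_i}} = z^T (a_{-1} \circ x).
\]
Substituting these into the supremum and noting that as $y$ ranges over $\Cbraces{y : \norm{y}_a \leq 1}$, the variable $z$ ranges exactly over $\Cbraces{z : \norm{z} \leq 1}$, the optimization becomes
\[
    \norm{x}_a^* = \sup_z \Cbraces{ z^T (a_{-1} \circ x) \mid \norm{z} \leq 1 },
\]
which is precisely the definition of $\norm{a_{-1} \circ x}^*$, the dual of the base norm $\norm{\cdot}$ evaluated at the scaled vector $a_{-1} \circ x$. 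This establishes the claimed identity $\norm{x}_a^* = \norm{a_{-1} \circ x}^*$.

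The proof is essentially a clean change of variables, so I do not expect a serious obstacle; the only points requiring a small amount of care are verifying that the map $y \mapsto a \circ y$ is indeed a bijection of the feasible set onto the unit ball of $\norm{\cdot}$ (which needs $a_i > 0$ and finite, exactly as assumed), and keeping the bookkeeping straight so that the scaling lands on $x$ rather than on $z$ in the final expression. If anything is delicate, it is ensuring that the supremum is genuinely attained over the transformed set and that no feasible $z$ is gained or lost under the substitution; both follow immediately from the positivity and finiteness of the entries of $a$.
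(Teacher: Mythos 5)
Your proof is correct and follows essentially the same route as the paper: both arguments apply the change of variables $z = a \circ y$ in the definition of the dual norm, observe that the constraint $\norm{a \circ y} \leq 1$ becomes $\norm{z} \leq 1$, and shift the scaling onto the dual-norm argument to recognize $\norm{a_{-1} \circ x}^*$. The only difference is notational bookkeeping (which symbol names the argument versus the optimization variable), and your closing worry about attainment of the supremum is unnecessary since the argument only needs the bijection between feasible sets.
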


\begin{proof}
    By definition
    \begin{ceqn} \[ \norm{z}_a^* = \sup_{\norm{x}_a\leq1} z^T x, \] \end{ceqn}
    and defining $y := a \circ x$ we get
    \begin{ceqn}
    \[ \norm{z}_a^* = \sup_{\norm{y}\leq1} z^T (a_{-1} \circ y) 
    = \norm{a_{-1} \circ z}^*. \]
    \end{ceqn}
\end{proof}

%
%

\subsection{Setwise Lipschitz CD (SL-CD)}

In SL-CD, an activated node $i$ chooses the edge $\ell \in \Scal_i$ to update at random with probability 
\begin{equation} \label{eq:coord_samling_probs}
    p_\ell = \frac{L_\ell}{\sum_{m \in \Scal_i} L_m}
\end{equation}
and updates $\lambda_\ell$ applying \eqref{eq:CD_update} with stepsize $\eta^k = 1/L_\ell$.

For convenience, we define the quantities 
\[ L^{(i)} := \sum_{m \in \Scal_i} L_m \]
and
\[ \Lcal_\ell := \paren{\frac{1}{L^{(i)}} + \frac{1}{L^{(j)}}} 
\text{\quad for \quad}
\ell \equiv (i,j). \] 

With these definitions, and taking expectation in \eqref{eq:iter_progress} for the Lipschitz-dependent sampling probabilities \eqref{eq:coord_samling_probs} gives
\begin{align*} 
    \Ebb [F(\lambda^{k+1})  \mid \lambda^k ]
    &\leq F(\lambda^k) - \frac{1}{2} \Exp{\frac{1}{L_{\ell_k}} \Sbraces{\nabla_{\ell_k} F(\lambda^k)}^2} \\
    &= F(\lambda^k) - \frac{1}{2n} \sum_{i=1}^n \frac{1}{L^{(i)}} \sum_{\ell \in \Scal_i} \Sbraces{\nabla_\ell F(\lambda^k)}^2 \\
    &\stackrel{(a)}{=} F(\lambda^k) - \frac{1}{2n} \sum_{\ell=1}^E \Lcal_\ell \Sbraces{ \nabla_\ell F(\lambda^k)}^2 
\end{align*}
where in $(a)$ we used that $\ell \equiv (i,j)$ implies $\ell \in \Scal_i, \Scal_j$.

In order to prove the convergence rate of SL-CD, provided in Theorem \ref{theo:rate_SL-CD}, we define the norm 
\[ \norm{x}_\Lcal := \sqrt{\sum_{\ell=1}^E \Lcal_\ell x_\ell^2}, \] 
so that we can write the per-iteration progress of SL-CD as 
\begin{equation} \label{eq:per-iter_progress_SL}
    \Ebb [F(\lambda^{k+1})]
    \leq F(\lambda^k) - \frac{1}{2n} \norm{\nabla F(\lambda^k)}_\Lcal^2.
\end{equation}

Noting that $\norm{x}_\Lcal = \norm{x \circ \Sbraces{\sqrt{\Lcal_1},\ldots,\sqrt{\Lcal_E}} }_2$ we can apply Fact \ref{fact:dual_norm_of_per-coord_prod} to get its dual norm: 
\[ \norm{x}_\Lcal^* = \sqrt{\sum_{\ell=1}^E \frac{1}{\Lcal_\ell} x_\ell^2}. \]




We call $\sigma_\Lcal$ the strong convexity constant of $F$ in this norm:
\begin{equation} \label{eq:SC_in_dual_norm_Lcal}
    F(y) \geq F(x) + \Abraces{\nabla F(x), y-x} + \frac{\sigma_\Lcal}{2} (\norm{y-x}_{\Lcal}^*)^2.
\end{equation}

We use the definitions of $\norm{\cdot}_\Lcal^*$ and $\sigma_\Lcal$ in the proof of the linear rate of SL-CD, given in the theorem below.

\begin{theorem}[\textit{\textbf{Rate of SL-CD}}] \label{theo:rate_SL-CD}
    SL-CD converges as
    \begin{equation*} 
    \Exp{F(\lambda^{k+1}) \mid \lambda^k} - F(\lambda^*) \leq 
    \paren{1 - \frac{\sigma_\Lcal}{n}} \Sbraces{F(\lambda^k) - F(\lambda^*)}
    \end{equation*}
    and it holds that  
    \begin{ceqn} \begin{equation} \label{eq:inequalities_mu_L}
        \sigma_A \Lcal_{\min} \leq \sigma_\Lcal \leq \sigma_A \Lcal_{\max}
    \end{equation} \end{ceqn}
    with $\Lcal_{\min} = \min_\ell \Lcal_\ell$ and $\Lcal_{\max} = \max_\ell \Lcal_\ell$.
\end{theorem}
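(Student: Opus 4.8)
The plan is to follow the two-stage template already used for SU-CD and SGS-CD: first derive the linear rate from a Polyak--\L ojasiewicz-type inequality, then sandwich $\sigma_\Lcal$ between multiples of $\sigma_A$ by comparing the relevant norms on $\range(A^T)$.

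First I would obtain the rate. Starting from the strong convexity of $F$ in $\norm{\cdot}_\Lcal^*$ stated in \eqref{eq:SC_in_dual_norm_Lcal} and minimizing both sides with respect to $y$ exactly as in the proof of Theorem \ref{theo:rate_SU-CD}, I get
\[ F(x^*) \geq F(x) - \frac{1}{2\sigma_\Lcal}\paren{\norm{\nabla F(x)}_\Lcal}^2, \]
where the gradient is now measured in $\norm{\cdot}_\Lcal$ because the dual of $\norm{\cdot}_\Lcal^*$ is $\norm{\cdot}_\Lcal$ itself. Rearranging yields $\paren{\norm{\nabla F(x)}_\Lcal}^2 \geq 2\sigma_\Lcal\paren{F(x)-F(x^*)}$, and substituting this lower bound into the per-iteration progress \eqref{eq:per-iter_progress_SL} and subtracting $F(\lambda^*)$ gives the claimed contraction factor $1 - \sigma_\Lcal/n$.

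The second stage is the two-sided bound on $\sigma_\Lcal$, handled as in Theorem \ref{theo:rate_SGS-CD}. I would restrict attention to $x \in \range(A^T)$, which loses no generality since all norms here are evaluated at $\nabla F$ and $\nabla F \in \range(A^T)$ by Lemma \ref{lemma:equality_of_norms}. On this subspace $\normAA{x}^2 = \normtwo{x}^2 = \sum_{\ell=1}^E x_\ell^2$, while $\paren{\norm{x}_\Lcal^*}^2 = \sum_{\ell=1}^E \frac{1}{\Lcal_\ell}x_\ell^2$. Bounding $\frac{1}{\Lcal_\ell}$ coordinate-wise between $\frac{1}{\Lcal_{\max}}$ and $\frac{1}{\Lcal_{\min}}$ gives
\[ \Lcal_{\min}\paren{\norm{x}_\Lcal^*}^2 \leq \normAA{x}^2 \leq \Lcal_{\max}\paren{\norm{x}_\Lcal^*}^2. \]
I would then transfer these into the strong convexity definitions exactly as in \eqref{eq:lower_ineq}--\eqref{eq:higher_ineq}: inserting $\normAA{y-x}^2 \geq \Lcal_{\min}\paren{\norm{y-x}_\Lcal^*}^2$ into the $\sigma_A$-strong convexity of $F$ shows $F$ is at least $\sigma_A\Lcal_{\min}$-strongly convex in $\norm{\cdot}_\Lcal^*$, hence $\sigma_\Lcal \geq \sigma_A\Lcal_{\min}$; inserting $\paren{\norm{y-x}_\Lcal^*}^2 \geq \frac{1}{\Lcal_{\max}}\normAA{y-x}^2$ into the $\sigma_\Lcal$-strong convexity in $\norm{\cdot}_\Lcal^*$ shows $F$ is at least $\frac{\sigma_\Lcal}{\Lcal_{\max}}$-strongly convex in $\normAA{\cdot}$, hence $\sigma_\Lcal \leq \sigma_A\Lcal_{\max}$. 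Together these give \eqref{eq:inequalities_mu_L}.

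I do not expect a serious obstacle here: once the dual norm $\norm{\cdot}_\Lcal^*$ has been identified via Fact \ref{fact:dual_norm_of_per-coord_prod}, both stages are simply diagonal reweightings of arguments already carried out for SU-CD and SGS-CD, with no set-overlap difficulty since $\norm{\cdot}_\Lcal$ is separable across coordinates. The only point demanding care is the restriction to $\range(A^T)$, which is what makes the semi-norm $\normAA{\cdot}$ collapse to $\normtwo{\cdot}$ and thereby enables the clean per-coordinate comparison with $1/\Lcal_\ell$; without it the non-trivial kernel of $A$ (Lemma \ref{lemma:no_SC}) would break the equivalence of the two norms.
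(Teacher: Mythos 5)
Your proposal is correct and follows essentially the same route as the paper's proof: the rate is obtained by minimizing the $\sigma_\Lcal$-strong-convexity inequality over $y$ and substituting the resulting bound on $\norm{\nabla F}_\Lcal^2$ into \eqref{eq:per-iter_progress_SL}, and the sandwich \eqref{eq:inequalities_mu_L} comes from the coordinate-wise comparison $\Lcal_{\min}\paren{\norm{x}_\Lcal^*}^2 \leq \normtwo{x}^2 \leq \Lcal_{\max}\paren{\norm{x}_\Lcal^*}^2$ on $\range(A^T)$ (via Lemma \ref{lemma:equality_of_norms}), transferred through the two strong-convexity definitions exactly as in \eqref{eq:min_SC_sigma_A}--\eqref{eq:min_SC_mu_L}. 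No gaps beyond those shared with the paper's own argument.
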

\begin{proof}
    We start by proving the linear rate.     
    Minimizing both sides of \eqref{eq:SC_in_dual_norm_Lcal} with respect to $y$ as done in \eqref{eq:guarantee_suboptim_SU} and \eqref{eq:guarantee_suboptim_SM} we get
    \begin{ceqn}
    \[ F(x^*) \geq F(x) - \frac{1}{2\sigma_\Lcal} \norm{\nabla F(x)}_{\Lcal}^2. \]
    \end{ceqn}
    Rearranging terms gives a lower bound on $\norm{\nabla F(x)}_{\Lcal}^2$, and replacing in \eqref{eq:per-iter_progress_SL} gives the result.    

    We now move on to show \eqref{eq:inequalities_mu_L}. 
    Once again, since the norms are evaluated at $\nabla F(\lambda)$ and \eqref{eq:equality_norms_2_and_A} holds, to obtain the relation between $\sigma_\Lcal$ and $\sigma_A$ we will compare $\norm{\cdot}_{\Lcal}^*$ with $\norm{\cdot}_2$ directly. 
    We have that 
    \[ c\norm{x}_2^2 - (\norm{x}_{\Lcal}^*)^2 = c\sum_\ell x_\ell^2 - \sum_\ell \frac{1}{\Lcal_\ell} x_\ell^2 
    = \sum_\ell \paren{c-\frac{1}{\Lcal_\ell}} x_\ell^2.  \]
    For $c \geq \max_\ell \frac{1}{\Lcal_\ell} = \frac{1}{\Lcal_{\min}}$ the expression is larger than zero, and thus
    \begin{equation} \label{eq:UB_norm_L}
        \frac{1}{\Lcal_{\min}} \norm{x}_2^2 \geq (\norm{x}_{\Lcal}^*)^2.
    \end{equation}
    Similarly, we have that  
    \[ c\norm{x}_{\Lcal}^2 - \norm{x}_2^2 = \sum_\ell \paren{\frac{c}{\Lcal_\ell}-1} x_\ell^2 \]
    is larger than zero for $c \geq \Lcal_{\max}$, and therefore
    \begin{equation} \label{eq:LB_norm_L}
        \Lcal_{\max} (\norm{x}_{\Lcal}^*)^2 \geq \norm{x}_2^2.
    \end{equation}

    Using these inequalities (and Lemma \ref{lemma:equality_of_norms}) in the strong convexity definitions we have on the one hand:
    \begin{align*} 
        f(y) &\geq f(x) + \Abraces{\nabla f(x), y-x} + \frac{\sigma_A}{2} \norm{y-x}_A^2 \\
        &\geq f(x) + \Abraces{\nabla f(x), y-x} + \frac{\sigma_A \Lcal_{\min}}{2} (\norm{y-x}_{\Lcal}^*)^2,
        \numberthis \label{eq:min_SC_sigma_A}
    \end{align*}
    and on the other hand:
    \begin{align*} 
        f(y) &\geq f(x) + \Abraces{\nabla f(x), y-x} + \frac{\sigma_\Lcal}{2} (\norm{y-x}_{\Lcal}^*)^2 \\
        &\geq f(x) + \Abraces{\nabla f(x), y-x} + \frac{\sigma_\Lcal}{2\Lcal_{\max}} \norm{y-x}_A^2.
        \numberthis \label{eq:min_SC_mu_L}
    \end{align*}
    Eqs. \eqref{eq:min_SC_sigma_A} and \eqref{eq:min_SC_mu_L} indicate respectively that $\sigma_\Lcal~\geq~\sigma_A \Lcal_{\min}$ and that $\sigma_A \geq \frac{\sigma_\Lcal}{\Lcal_{\max}}$. 
    Putting both together gives \eqref{eq:inequalities_mu_L}.    
\end{proof}

Having obtained the rate of SL-CD, we can compare it against that of SU-CD. 
We have the following result. 

\begin{corollary} \label{corol:SL_faster_SU}
    In expectation, SL-CD converges as fast or faster than SU-CD.
\end{corollary}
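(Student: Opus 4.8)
The plan is to directly compare the two contraction factors. SL-CD contracts suboptimality by the factor $1 - \sigma_\Lcal/n$ (Theorem \ref{theo:rate_SL-CD}) while SU-CD contracts by $1 - 2\sigma_A/(LnN_{\max})$ (Theorem \ref{theo:rate_SU-CD}). Since a smaller contraction factor means faster convergence, proving the corollary reduces to showing $\sigma_\Lcal/n \geq 2\sigma_A/(LnN_{\max})$, i.e. (cancelling the common $n$) simply $\sigma_\Lcal \geq 2\sigma_A/(LN_{\max})$.

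To establish this inequality I would first invoke the already-proven lower bound $\sigma_\Lcal \geq \sigma_A \Lcal_{\min}$ from \eqref{eq:inequalities_mu_L}, which reduces the task to showing $\Lcal_{\min} \geq 2/(LN_{\max})$. Recalling that $\Lcal_\ell = 1/L^{(i)} + 1/L^{(j)}$ for $\ell \equiv (i,j)$ and $L^{(i)} = \sum_{m \in \Scal_i} L_m$, I would use the fact (stated just before \eqref{eq:coord_Lips_constants}) that every coordinate-wise constant satisfies $L_\ell \leq L$. This gives $L^{(i)} = \sum_{m \in \Scal_i} L_m \leq N_i L \leq N_{\max} L$, hence $1/L^{(i)} \geq 1/(N_{\max} L)$ for every node $i$. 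Summing the two endpoint contributions yields $\Lcal_\ell \geq 2/(N_{\max} L)$ for every edge $\ell$, so $\Lcal_{\min} \geq 2/(N_{\max} L)$. Chaining the two bounds gives $\sigma_\Lcal \geq \sigma_A \Lcal_{\min} \geq 2\sigma_A/(LN_{\max})$, which is exactly what is needed.

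The main obstacle here is mild: one must make sure the inequality points the right way at the step $L^{(i)} \leq N_{\max} L$, which requires each per-coordinate smoothness to be dominated by the global smoothness $L$, and one must bound the number of summands by the degree $N_i \leq N_{\max}$ (not confuse it with the number of sets a coordinate belongs to). The rest is a routine chain of inequalities, so I do not anticipate genuine difficulty. It is worth noting explicitly that all these norm comparisons are only needed on $\range(A^T)$, where $\nabla F$ always lies (Lemma \ref{lemma:equality_of_norms}), which is what makes the strong-convexity transfer underlying \eqref{eq:inequalities_mu_L} legitimate.
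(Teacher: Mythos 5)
Your proposal is correct and follows essentially the same route as the paper's own proof: compare the two contraction factors, reduce via the bound $\sigma_\Lcal \geq \sigma_A \Lcal_{\min}$ from \eqref{eq:inequalities_mu_L} to showing $\Lcal_{\min} \geq 2/(L N_{\max})$, and establish this by bounding $L^{(i)} \leq N_{\max} L$ at each endpoint. The only (immaterial) difference is that the paper bounds $L^{(i)} \leq L_{\max} N_{\max}$ first and invokes $L_{\max} \leq L$ at the very end, whereas you apply $L_\ell \leq L$ term by term inside the sum.
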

\begin{proof}
    The convergence rate of SU-CD is $\frac{2\sigma_A}{LnN_{\max}}$ (Theorem \ref{theo:rate_SU-CD}) and that of SL-CD is $\frac{\sigma_\Lcal}{n}$ (Theorem \ref{theo:rate_SL-CD}). 
    Since in the slowest case of SL-CD we have $\sigma_\Lcal = \sigma_A \Lcal_{\min}$, it suffices to show that $\Lcal_{\min} \geq \frac{2}{LN_{\max}}$.
    Indeed, we have that 
    \[  L^{(i)} = \sum_{\ell \in \Scal_i} L_\ell \leq L_{\max}  |\Scal_i| \leq L_{\max} N_{\max}  \]
    and therefore
    \[ \Lcal_{\min} = \min_{(i,j) \in \Ecal} \paren{\frac{1}{L^{(i)}} + \frac{1}{L^{(j)}}}  
    \geq \frac{2}{L_{\max} N_{\max}} \]
    The proof is complete by noting that it always holds that $L_{\max} \leq L$ \cite{wright2015coordinate}.
\end{proof}

Since both SL-CD and SGS-CD can converge at the same speed as SU-CD in the worst case, we cannot claim that either of them is faster than the other. 
We can, however, exploit the knowledge of the Lipschitz constants to get an improved version of the GS rule, known as the Gauss-Southwell Lipschitz rule \cite{nutini2015coordinate}, that when combined with per-coordinate stepsizes allows for faster convergence than both SGS-CD \textit{and} SL-CD.
We call this algorithm SGSL-CD, and we analyze it next.

%
%

\subsection{Setwise Gauss-Southwell Lipschitz CD (SGSL-CD)}

If node $i$ goes active, the Gauss-Southwell Lipschitz (GSL) rule chooses to update $\lambda_\ell, \ell \in \Scal_i$ according to
\[ \ell = \argmax_{m \in \Scal_i} \frac{\abs{\nabla_m f(x^k)}}{\sqrt{L_m}}. \]

If we now use the GSL rule with the per-coordinate stepsizes $\eta^k = 1/L_\ell$, the per-iteration progress given by \eqref{eq:iter_progress} becomes:
\begin{equation} \label{eq:SGSL-CD_progress}
\Exp{F(\lambda^{k+1})} \leq F(\lambda^k) - \frac{1}{2n} \sum_{i=1}^n \max_{\ell \in \Scal_i} \paren{\frac{1}{L_\ell} \Sbraces{\nabla_\ell F(\lambda^k)}^2}.
\end{equation}

Note the resemblance of this expression with the per-iteration progress of SGS-CD in \eqref{eq:progress_SGS-CD}. 
Similarly to the previous procedures, we define the ``Set-Max Lipschitz" norm:
\begin{equation} \label{eq:SML_norm}
\normSML{x} := \sqrt{ \sum_{i=1}^n \max_{\ell \in \Scal_i} \paren{\frac{1}{L_\ell} x_\ell^2} },
\end{equation}
and call $\sigma_{\SML}$ the strong convexity constant of $F$ in the dual norm $\normSML{\cdot}^*$
\begin{equation} \label{eq:SC_in_SML}
F(y) \geq F(x) + \Abraces{\nabla F(x), y-x} + \frac{\sigma_{\SML}}{2} \paren{\norm{y-x}_{\SML}^*}^2. 
\end{equation}
We can now state the convergence rate of SGSL-CD.

\begin{theorem}[\textit{\textbf{Rate of SGSL-CD}}] \label{theo:rate_SGSL-CD}
    SGSL-CD converges as
    \begin{equation} \label{eq:rate_SGSL-CD}
        \Exp{ F(\lambda^{k+1}) \mid \lambda^k } - F(\lambda^*) \leq
        \paren{1 - \frac{\sigma_{\SML}}{n}} \Sbraces{F(\lambda^k) - F(\lambda^*)},
    \end{equation}
    and it holds that 
    \begin{equation} \label{eq:SGSL_faster_than_SGS}
        \frac{\sigma_{\SM}}{L} \leq \sigma_{\SML}.
    \end{equation}
\end{theorem}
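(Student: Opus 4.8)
The plan is to follow the same three-move template used for SU-CD, SGS-CD and SL-CD: first turn the strong-convexity inequality \eqref{eq:SC_in_SML} into a bound on the squared gradient norm in terms of the suboptimality, then recognize the per-iteration decrease \eqref{eq:SGSL-CD_progress} as exactly $\tfrac{1}{2n}\paren{\normSML{\nabla F(\lambda^k)}}^2$, and finally establish \eqref{eq:SGSL_faster_than_SGS} through a norm comparison against $\normSM{\cdot}$.

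For the rate, I would minimize both sides of \eqref{eq:SC_in_SML} with respect to $y$. Since the quadratic term is built on the dual norm $\normSML{\cdot}^*$, the minimization produces the \emph{primal} norm of the gradient, giving
\[ F(x^*) \geq F(x) - \frac{1}{2\sigma_{\SML}}\paren{\normSML{\nabla F(x)}}^2, \]
exactly as in \eqref{eq:guarantee_suboptim_SU} and \eqref{eq:guarantee_suboptim_SM}. Rearranging yields $\paren{\normSML{\nabla F(x)}}^2 \geq 2\sigma_{\SML}\,(F(x)-F(x^*))$. Next I would observe that, by definition \eqref{eq:SML_norm}, the summation on the right-hand side of \eqref{eq:SGSL-CD_progress} is precisely $\paren{\normSML{\nabla F(\lambda^k)}}^2$, so the progress reads $\Ebb[F(\lambda^{k+1})\mid\lambda^k] \leq F(\lambda^k) - \tfrac{1}{2n}\paren{\normSML{\nabla F(\lambda^k)}}^2$. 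Substituting the lower bound and subtracting $F(\lambda^*)$ from both sides gives the contraction factor $1-\tfrac{\sigma_{\SML}}{n}$.

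For inequality \eqref{eq:SGSL_faster_than_SGS}, the key observation is that $\SML$ and $\SM$ share the identical ``set-max'' structure and differ only through the weights $1/L_\ell$. Since each coordinate-wise constant satisfies $L_\ell \leq L$ (the fact invoked in Corollary \ref{corol:SL_faster_SU}), one has $\tfrac{1}{L_\ell}\geq\tfrac{1}{L}$ and hence $\max_{\ell\in\Scal_i}(\tfrac{1}{L_\ell}x_\ell^2) \geq \tfrac{1}{L}\max_{\ell\in\Scal_i}x_\ell^2$ for every $i$. Summing over $i$ and taking square roots gives the pointwise bound $\normSML{x}\geq\tfrac{1}{\sqrt L}\normSM{x}$. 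I would then pass to the dual norms: a pointwise inequality between norms reverses under dualization (the larger norm has the smaller unit ball), so $\normSML{z}^*\leq\sqrt L\,\normSM{z}^*$, i.e. $\paren{\normSM{z}^*}^2\geq\tfrac{1}{L}\paren{\normSML{z}^*}^2$. Inserting this into the strong-convexity inequality for $\normSM{\cdot}^*$ — exactly the mechanism of \eqref{eq:min_SC_sigma_A}--\eqref{eq:min_SC_mu_L} — shows that $F$ is at least $\tfrac{\sigma_{\SM}}{L}$-strongly convex in $\normSML{\cdot}^*$, whence $\sigma_{\SML}\geq\tfrac{\sigma_{\SM}}{L}$.

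I expect the rate derivation to be entirely routine. The only step requiring care is the dualization direction: one must check that $\normSML{x}\geq\tfrac{1}{\sqrt L}\normSM{x}$ indeed yields $\normSML{z}^*\leq\sqrt L\,\normSM{z}^*$, and then that this is precisely the inequality needed to lower-bound $\paren{\normSM{\cdot}^*}^2$ inside the strong-convexity chain. Unlike Theorem \ref{theo:rate_SGS-CD}, no non-overlapping surrogate norm is required here, because the comparison is between two norms of the same set-max form; this is why the argument is comparatively short.
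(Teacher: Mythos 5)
Your proof is correct, and the rate derivation is identical to the paper's: minimize both sides of \eqref{eq:SC_in_SML} over $y$, identify the sum in \eqref{eq:SGSL-CD_progress} as $\paren{\normSML{\nabla F(\lambda^k)}}^2$, and substitute the resulting lower bound.

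Where you diverge is in how you establish the key inequality $\normSM{z}^* \geq \tfrac{1}{\sqrt{L}}\normSML{z}^*$ behind \eqref{eq:SGSL_faster_than_SGS}. The paper works directly on the two dual-norm maximization problems: it introduces the maximizers $x^*_{\SML}$ and $x^*_{\SM}$, argues they saturate their respective constraints, identifies $x^*_{\SM}$ with $x^*_{\SML}$ rescaled coordinate-wise by $1/\sqrt{L_\ell}$, and then bounds coordinate-wise using $L_\ell \leq L_{\max} \leq L$. You instead prove the \emph{primal} inequality $\normSML{x} \geq \tfrac{1}{\sqrt{L}}\normSM{x}$ directly from $1/L_\ell \geq 1/L$ inside each set-max, and then dualize via the standard ball-containment argument (a larger norm has a smaller unit ball, hence a smaller dual norm). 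The two routes land on the same inequality and feed it into the identical strong-convexity sandwich, so they are equivalent in substance; but your route is arguably cleaner and more airtight. The paper's identification of the two maximizers is loose as stated — the points $x^*_{\SM}$ and $x^*_{\SML}$ solve suprema with the \emph{same} linear objective $z^T x$ over \emph{different} balls, so the rescaled $x^*_{\SML}$ is merely feasible for the SM ball, not necessarily optimal — whereas your duality-reversal step requires no claim about optimizers at all, only the pointwise norm comparison. One small point of care you correctly flag and correctly resolve: the direction of the reversal ($\normSML{x}\geq\tfrac{1}{\sqrt L}\normSM{x}$ pointwise gives $\normSML{z}^*\leq\sqrt L\,\normSM{z}^*$), which is exactly what the strong-convexity chain needs.
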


\begin{proof}
    The expression of the rate is obtained with the procedure followed for the previous algorithms: minimizing both sides of \eqref{eq:SC_in_SML} with respect to $y$ and arranging terms we can find 
    $ \norm{\nabla F(x)}_{\SML} \geq 2 \sigma_{\SML} (F(x) - F(x^*))$, and using this in \eqref{eq:SGSL-CD_progress} gives \eqref{eq:rate_SGSL-CD}. 

    We now show \eqref{eq:SGSL_faster_than_SGS}. 
    By definition, the dual norm of $\normSML{\cdot}$~is
    \begin{equation*}
        \normSML{z}^* :=  
        \sup_x \Cbraces{ z^T x \; \biggr\rvert \normSML{x} \leq 1} =
        \sup_x \Cbraces{ z^T x \; \biggr\rvert \sqrt{ \sum_{i=1}^n \max_{\ell \in \Scal_i} \paren{\frac{1}{L_\ell} x_\ell^2} } \leq 1 }.
    \end{equation*}
    
    Similarly, we can write the dual norm of $\normSM{\cdot}$ provided in \eqref{eq:def_norm_dual_SM} also as
    \begin{equation*}
        \normSM{z}^* :=  \sup_x \Cbraces{ z^T x \; \biggr\rvert \normSM{x} \leq 1} =
        \sup_x \Cbraces{ z^T x \; \biggr\rvert \sqrt{ \sum_{i=1}^n \max_{\ell \in \Scal_i} x_\ell^2 } \leq 1 }.
    \end{equation*}

    We call the values that achieve the supremum $x^*_{\SML}$ and $x^*_{\SM}$, respectively. 
    To maximize $z^T x$, these values will satisfy the constraints of each dual norm with equality, i.e. 
    \[ \normSML{x^*_{\SML}} = 1  \text{\quad and \quad} \normSM{x^*_{\SM}} = 1. \]

    From these conditions and the definitions of the dual norms above we obtain 
    \[ x^*_{\SML} \circ \Sbraces{ \frac{1}{\sqrt{L_1}}, \cdots, \frac{1}{\sqrt{L_E}} } = x^*_{\SM}. \]
    Furthermore, using again $L_{\max} \leq L$, we have
    \[ x^*_{\SM} = x^*_{\SML} \circ \Sbraces{ \frac{1}{\sqrt{L_1}}, \cdots, \frac{1}{\sqrt{L_E}} }
    \succeq \frac{1}{\sqrt{L_{\max}}} x^*_{\SML} \succeq \frac{1}{\sqrt{L}} x^*_{\SML}, \]
    where ``$\succeq$" indicates coordinate-wise inequality,  and therefore  
    \[ \normSM{z}^* \geq \frac{1}{\sqrt{L}} \norm{z}^*_{\SML}. \]

    Lastly, using this inequality in the strong convexity equation of $F$ in $\normSM{\cdot}^*$:
    \begin{align*}
        F(y) &\geq F(x) + \Abraces{\nabla F(x), y-x} + \frac{\sigma_{\SM}}{2} \paren{\norm{y-x}_{\SM}^*}^2 \\
        &\geq F(x) + \Abraces{\nabla F(x), y-x} + \frac{\sigma_{\SM}}{2L} \paren{\norm{y-x}_{\SML}^*}^2,
    \end{align*}
    from where we obtain $\frac{\sigma_{\SM}}{L} \leq \sigma_{\SML}$
\end{proof}

Theorem \ref{theo:rate_SGSL-CD} states that SGSL-CD converges (in expectation) at least as fast as SGS-CD. 
Algorithm SGSL-CD is also at least as fast as SL-CD by an argument analogous to Remark~\ref{rem:SGS_faster_than_SU}: for the same sequence of node activations, the setwise GSL rule achieves an equal or larger suboptimality reduction than the random coordinate sampling with the probabilities in  \eqref{eq:coord_samling_probs}.
We thus have the following result.

\begin{corollary} \label{corol:SGSL_fastest}
    In expectation, SGSL-CD converges equally fast or faster than both SGS-CD and SL-CD.
\end{corollary}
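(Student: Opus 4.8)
The plan is to establish the two comparisons in the corollary separately, since they call for different tools. The claim that SGSL-CD is at least as fast as SGS-CD follows immediately from the rate comparison, whereas the claim against SL-CD cannot be obtained by comparing strong-convexity constants directly (the constants $\sigma_{\SML}$ and $\sigma_\Lcal$ live in the incompatible norms $\normSML{\cdot}^*$ and $\norm{\cdot}_\Lcal^*$, which we never relate), so I would instead argue via a per-iteration dominance, in the spirit of Remark~\ref{rem:SGS_faster_than_SU}. For the comparison with SGS-CD I would simply read off the contraction factors: SGS-CD contracts by $1 - \frac{\sigma_{\SM}}{Ln}$ (Theorem~\ref{theo:rate_SGS-CD}) and SGSL-CD by $1 - \frac{\sigma_{\SML}}{n}$ (Theorem~\ref{theo:rate_SGSL-CD}). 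Inequality~\eqref{eq:SGSL_faster_than_SGS} gives $\frac{\sigma_{\SM}}{L} \leq \sigma_{\SML}$, hence $\frac{\sigma_{\SM}}{Ln} \leq \frac{\sigma_{\SML}}{n}$, so the contraction factor of SGSL-CD is no larger than that of SGS-CD, which is exactly the claim.

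For the comparison with SL-CD, I would fix the activated node $i$ and compare the one-step descent that each rule extracts from the set $\Scal_i$. Both use the per-coordinate stepsize $1/L_\ell$, so by coordinate-wise smoothness \eqref{eq:coord_Lips_constants} updating coordinate $\ell$ guarantees a descent of $\frac{1}{2L_\ell}(\nabla_\ell F(\lambda^k))^2$. The key observation is that the GSL rule, $\ell = \argmax_{m\in\Scal_i} \frac{\abs{\nabla_m F}}{\sqrt{L_m}}$, maximizes precisely this descent quantity over $\Scal_i$, while SL-CD samples $\ell$ with probability $p_\ell = L_\ell / L^{(i)}$ as in \eqref{eq:coord_samling_probs}, so its conditional expected descent equals $\sum_{\ell\in\Scal_i} p_\ell \frac{1}{2L_\ell}(\nabla_\ell F)^2$. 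Since $\{p_\ell\}_{\ell\in\Scal_i}$ is a probability distribution over $\Scal_i$, this expectation is a convex combination of the per-coordinate descents, which the maximum dominates:
\[ \max_{\ell\in\Scal_i} \frac{1}{2L_\ell}(\nabla_\ell F)^2 \;\geq\; \sum_{\ell\in\Scal_i} p_\ell \frac{1}{2L_\ell}(\nabla_\ell F)^2. \]
Averaging over the uniformly chosen node $i$ preserves this inequality, so SGSL-CD attains, at every iteration and in expectation, a suboptimality reduction at least as large as that of SL-CD. Combining the two comparisons yields the corollary.

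The main obstacle I anticipate is conceptual rather than computational: recognizing that the SGSL-vs-SL comparison must bypass the strong-convexity constants and be carried out at the level of the per-iteration descent. The crucial alignment is that the GSL selection criterion $\abs{\nabla_\ell F}/\sqrt{L_\ell}$ is exactly monotone in the descent term $\frac{1}{2L_\ell}(\nabla_\ell F)^2$ induced by the stepsize $1/L_\ell$, and that the SL-CD sampling weights $p_\ell = L_\ell/L^{(i)}$ combine with these stepsizes so that the conditional expected descent collapses to a genuine convex combination whose weights sum to one over $\Scal_i$; once this is set up, the ``maximum dominates the average'' step is immediate.
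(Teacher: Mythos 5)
Your proposal is correct and follows essentially the same route as the paper: the comparison with SGS-CD is read off directly from the contraction factors together with inequality~\eqref{eq:SGSL_faster_than_SGS} of Theorem~\ref{theo:rate_SGSL-CD}, and the comparison with SL-CD is the per-iteration dominance argument that the paper invokes ``analogous to Remark~\ref{rem:SGS_faster_than_SU}.'' The only difference is that you make explicit the computation the paper leaves implicit — that the GSL rule maximizes exactly the descent term $\frac{1}{2L_\ell}(\nabla_\ell F)^2$ whose $p_\ell$-weighted convex combination is SL-CD's conditional expected descent — which is a faithful filling-in of the paper's sketch rather than a different proof.
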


We remark that we could have compared the convergence rates of SGSL-CD and SL-CD following a procedure similar to the one used to compare SGS-CD and SU-CD, where we would define a norm using non-overlapping sets (in this case, accounting also for the coordinate-wise Lipschitz consants) as an intermediate step to compare the strong convexity constants $\sigma_{\Lcal}$ and $\sigma_{\SML}$.
We did this derivation and observed that just as it happened with $\sigma_{\SM}$ in eq. \eqref{eq:ineq_sigmaSM_sigmaA}, the lower bound on $\sigma_{\SML}$ is not tight and suggests that SL-CD could be faster than SGSL-CD, which as we argued above, is not true. 

Corollary \ref{corol:SL_faster_SU} states that SL-CD is faster than SU-CD, and Corollary \ref{corol:SGSL_fastest} states that SGSL-CD is the fastest of all algorithms analyzed here. 
However, these two methods depend on the knowledge of the coordinate-wise Lipschitz constants $L_\ell$ (see eq. \eqref{eq:coord_Lips_constants}). 
These constants are the global upper bounds on the diagonal entries of the Hessian $H = \nabla^2 F$, given by
\[ H_{\ell \ell}(\lambda) = \nabla^2 f_i^* (U_i^T \Lambda \lambda) + \nabla^2 f_j^* (U_j^T \Lambda \lambda), \;
\ell \equiv (i,j), \]
i.e. $H_{\ell \ell}(\lambda) \leq L_\ell \; \forall \lambda$.
We next describe a decentralized algorithm to estimate these values when they are not known. 
In Section \ref{sec:numerical_results} we show that the versions of SL-CD and SGSL-CD that use estimated constants, which we call SeL-CD and \mbox{SGSeL-CD} respectively, still perform remarkably well.

%
%

\subsection{Smoothness constants estimation} \label{ref:L_estimation}

In \cite{nesterov2012efficiency} the author proposed a method to estimate the value of the instantaneous Lipschitz constants $L_\ell(\lambda)$ when they are not known. 
By \textit{instantaneous} we mean the value of the Lipschitz constants at the current point $\lambda^k$, and not global values valid for any value of $\lambda$.

The procedure consists on finding, every time that variable $\lambda_\ell$ is going to be updated at iteration $k$, the \textit{lowest} value $L_\ell(\lambda^k)$ such that after applying update \eqref{eq:CD_update} with stepsize $\eta = 1/L_\ell(\lambda^k)$ it holds that 
$\nabla_\ell F(\lambda^k) \cdot \nabla_\ell F(\lambda^{k+1}) > 0$. 
In other words, the procedure searches for a Lipschitz constant (or equivalently, a stepsize) for which the update \eqref{eq:CD_update} does not \textit{overshoot}, making the gradient take a completely different direction. 

The procedure to estimate $L_\ell(\lambda^k)$ is shown in Algorithm \ref{alg:L_estimation}. 
In our numerical simulations, we denote SeL-CD and SGSeL-CD the versions of SL-CD and SGSL-CD that use estimated Lipschitz constants instead of the exact $L$ values.
SeL-CD is obtained by replacing line \ref{line:SU_update} in Algorithm \ref{alg:SU-CD} with Algorithm \ref{alg:L_estimation} and using the estimated values $L_\ell$ for the random sampling. 
SGSeL-CD is obtained by replacing line \ref{line:SGS_update} in Algorithm \ref{alg:SGS-CD} with Algorithm \ref{alg:L_estimation} and using the estimated values $L_\ell$ for the GSL neighbor choice. 

The choice of the initial value $\widehat{L}_\ell^0$ before entering the search loop is subject to a trade-off: if $\widehat{L}_\ell^0$ is too big, the loop will be exited after only one iteration but we risk being too conservative and making a much smaller step than we could. Conversely, if $\widehat{L}_\ell^0$ is too small, by repeated doubling we will eventually find the value $\widehat{L}_\ell$ that is closest to the true instantaneous smoothness $L_\ell(\lambda^k)$, but this may take many iterations inside the loop, which means many rounds of computation and communication for the nodes involved. 

How are these estimated values expected to perform with respect to the analytical ones? 
This depends heavily on the problem at hand. 
We can easily construct a case where the exact constants perform better than the estimated: assume that we are in the parallel distributed setting, where we perform primal optimization, and the function to optimize is $F(x) = x^T \diag(L_1,\ldots,L_d) x$. 
If $x^0 \neq \zeros$, then the algorithm using the analytic constants can converge in $d$ steps (one in each coordinate). 
This is actually what either SGS-CD or SGSL-CD would achieve.
However, using estimated constants will most likely exit the search loop finding values $\widehat{L}_\ell \neq L_\ell$, and thus will need more iterations.
Conversely, the analytical constants $L_\ell$ are \textit{global} quantities, and therefore, although they are valid in the complete optimization space, they might be very different to the real instantaneous Lipschitz constants for many values of $\lambda$ (or $x$ in the example above). 
In that case, we may get a much better approximation to the instantaneous value using the estimations, and therefore a faster convergence due to using a larger stepsize.     
In Section \ref{sec:exact_vs_estimated_L} we provide numerical tests where we observe both behaviors.   

%
\setlength{\textfloatsep}{10pt} 
\begin{algorithm}[t]
\caption{Online smoothness constant estimation}
\label{alg:L_estimation}
\begin{algorithmic}[1]
    \MYSTATE \textbf{Assumption:} Nodes $i$ and $j$ will update $\lambda_\ell, \ell \; {\equiv} \; (i,j)$ and they have already exchanged their $\nabla f_x^*(u_x^T A \lambda), x=i,j$. \\
    \textbf{Inputs:} Instantaneous smoothness starting value $\widehat{L}_\ell^0$ \\
    \textbf{Each node $x = i,j$ then runs:}
    \MYSTATE Compute $\nabla_\ell F(\lambda^k)$ with \eqref{eq:coord_gradient} using $\nabla f_x^*(u_x^T A \lambda), x=i,j$
    \MYSTATE Set $\widehat{L}_\ell \gets \widehat{L}_\ell^0$
    \MYSTATE \textbf{do ...}
        \MYSTATE \algindent Set $\widehat{L}_\ell \gets 2 \cdot \widehat{L}_\ell$ \label{line:double_L}
        \MYSTATE \algindent Compute 
        $\widehat{\lambda}_\ell = \lambda_\ell^k - (1/\widehat{L}_\ell) \cdot \nabla_\ell F(\lambda^k)$
        \MYSTATE \algindent Compute $\nabla f_x^*(u_x^T A \widehat{\lambda}_\ell)$ and send to neighbor
        \MYSTATE \algindent Compute $\nabla_\ell F(\widehat{\lambda})$ with \eqref{eq:coord_gradient} using $\nabla f_x^*(u_x^T A \widehat{\lambda}_\ell), x=i,j$
        \MYSTATE \algindent \textbf{... while} $\nabla_\ell F(\lambda^k) \cdot \nabla_\ell F(\widehat{\lambda}) \leq 0$
    \MYSTATE \textbf{end do-while}
\MYSTATE Set $L_\ell^{k+1} \gets 0.5 \cdot \widehat{L}_\ell$ and $\lambda_\ell^{k+1} \gets \widehat{\lambda}_\ell$
\end{algorithmic}
\end{algorithm}

%
%

\section{Additional considerations} \label{sec:additional_considerations}

%
%

\subsection{Application to parallel distributed optimization} \label{sec:parallel_distributed}

In the parallel distributed setup, the parameter vector is stored in a server accessible by multiple workers, each of which modifies some or all of the coordinates of the parameter. 
We assume that coordinates are updated by a single worker at each iteration and workers always access the most recent value of the parameter.

In this setting, if there are $E$ coordinates, $n$ workers, and we let each worker $i$ update a different set $\Scal_i$ of coordinates such that (i) the sets overlap, and (ii) each coordinate can be updated by exactly two workers, then all results presented previously (Theorems \ref{theo:rate_SU-CD}, \ref{theo:rate_SGS-CD}, \ref{theo:rate_SL-CD}, and \ref{theo:rate_SGSL-CD}) hold also for this setting. 
We remark these two conditions are \textit{not} necessary conditions to apply the setwise CD algorithms to the parallel distributed setting, but only  to directly apply the results of the theorems, which were derived for the decentralized setting. 
In fact, the family of setwise CD algorithms \textit{can always be applied} to the parallel distributed setting \textit{independently} of the degree of overlapping of the sets and the number of the coordinates modified by each worker.

We can then also easily construct a setting where SGS-CD is $N_{\max}$ times faster than SU-CD: 
let all sets have the same size $|\Scal_i| = N_{\max} \; \forall i$, exactly $(N_{\max}-1)$      coordinates in each set have $\nabla_m F(\lambda) = 0$, and only one $\ell$ have $\nabla_\ell F(\lambda) \neq 0$. 
In this case, on average \emph{only $\frac{1}{N_{\max}}$ times will SU-CD choose the coordinate that gives some improvement}, while SGS-CD will do it at all iterations.

Note that achieving the maximum speedup for this carefully crafted scenario requires that the gradients of all coordinates are independent, which is not verified in the decentralized optimization setting: according to eq. \eqref{eq:coord_gradient}, for a $\nabla_m F$ to be zero, it must hold that $\nabla f_i^* = \nabla f_j^*$ for $m \equiv (i,j)$. 
But unless this equality holds for \emph{all} $(i,j) \in \Ecal$ (i.e., unless the minimum has been attained), $\lambda$ will continue to change, and the $\nabla f_i^*$ will differ.
This prevents us from easily designing a scenario in the decentralized setting where SGS-CD attains the speedup upper bound with respect to SU-CD.
Nevertheless, in Figure \ref{fig:decen_and_distrib} we show examples where 
(i) \textit{the speedup increases linearly with $N_{\max}$} for the decentralized setting, and 
(ii) \textit{the speedup matches $N_{\max}$} for the parallel distributed setting.

%
%

%
%

\subsection{Dual-unfriendly functions and relation to Dual Ascent} \label{sec:dual_unfriendly}

The exposition that we have adopted up to this point may suggest that in order to run the setwise CD methods presented here, one should be able to compute the Fenchel conjugates $f_i^*$ for $i \in [n]$. 
Computing these functions may be tedious, and in some cases
simply impossible. 

However, we remark that the dual coordinate algorithm presented here is equivalent to the dual decomposition method (Section 2.2 in \cite{boyd2011distributed}) and therefore the gradients $\nabla f_i^*$ can be directly computed by minimizing the per-node Lagrangian (see also Proposition 11.3 in \cite{rockafellar2009variational})
\begin{equation} \label{eq:per-node_Lagrangian}
    \nabla f_i^*(u_i^T A \lambda) = 
    \arg\min_{\theta_i}  \Sbraces{ f_i(\theta_i) + \sum_{\ell \in \Scal_i} A_{i \ell} \lambda_\ell \theta_i}.
\end{equation}

Therefore, to apply the algorithms presented here we do \textit{not} need to be able to compute the Fenchel conjugates $f_i^*$, as long as we can solve \eqref{eq:per-node_Lagrangian} analytically or numerically to a high precision.
The latter is what we do in 
Section \ref{sec:numerical_results} for logistic regression.
We remark that a formal treatment of solving \eqref{eq:per-node_Lagrangian} inexactly would require an error propagation analysis similar to that done in Section 6 of \cite{uribe2020dual} for the synchronous setup.

%
%

\subsection{Case $d > 1$} \label{sec:d_larger_1}

To extend the proofs above for $d>1$, the block~arrays $\Lambda$ and  $U_i$ should be used instead of $A$ and $u_i$, and the selector matrices $T_i$ should be redefined in the same way (i.e., by making a Kronecker product with the identity). 
Then, all the operations that in the proofs above are applied \emph{per entry} (scalar coordinate) of the vector $\lambda$, should now be applied to \emph{the magnitude} of each vector coordinate $\lambda_\ell \in \Rbb^d$ of $\lambda \in \Rbb^{Ed}$. 
Also, since $\nabla_m F \in \Rbb^d$, in this case the GS rule becomes 
$\argmax_{m \in \Scal_i} \normtwo{\nabla_m F(\lambda)}^2$ (and the GSL rule is modified analogously).

%
%

\section{Numerical Results} \label{sec:numerical_results}

In this section, we test the algorithms proposed in numerical simulations and analyze their performance in a range of different scenarios. 
In all cases, we used \eqref{eq:per-node_Lagrangian} to compute the $\nabla f_i^*$ needed in \eqref{eq:coord_gradient}. 
For quadratic and linear least squares problems \eqref{eq:per-node_Lagrangian} has a closed-form expression, while for logistic regression we used the SciPy module.

%
%

%
%
\begin{figure}[tb]
\centering
\begin{subfigure}{0.51\linewidth}
    \centering \includegraphics[trim={1mm 1mm 1mm 1mm},clip,width=\linewidth]{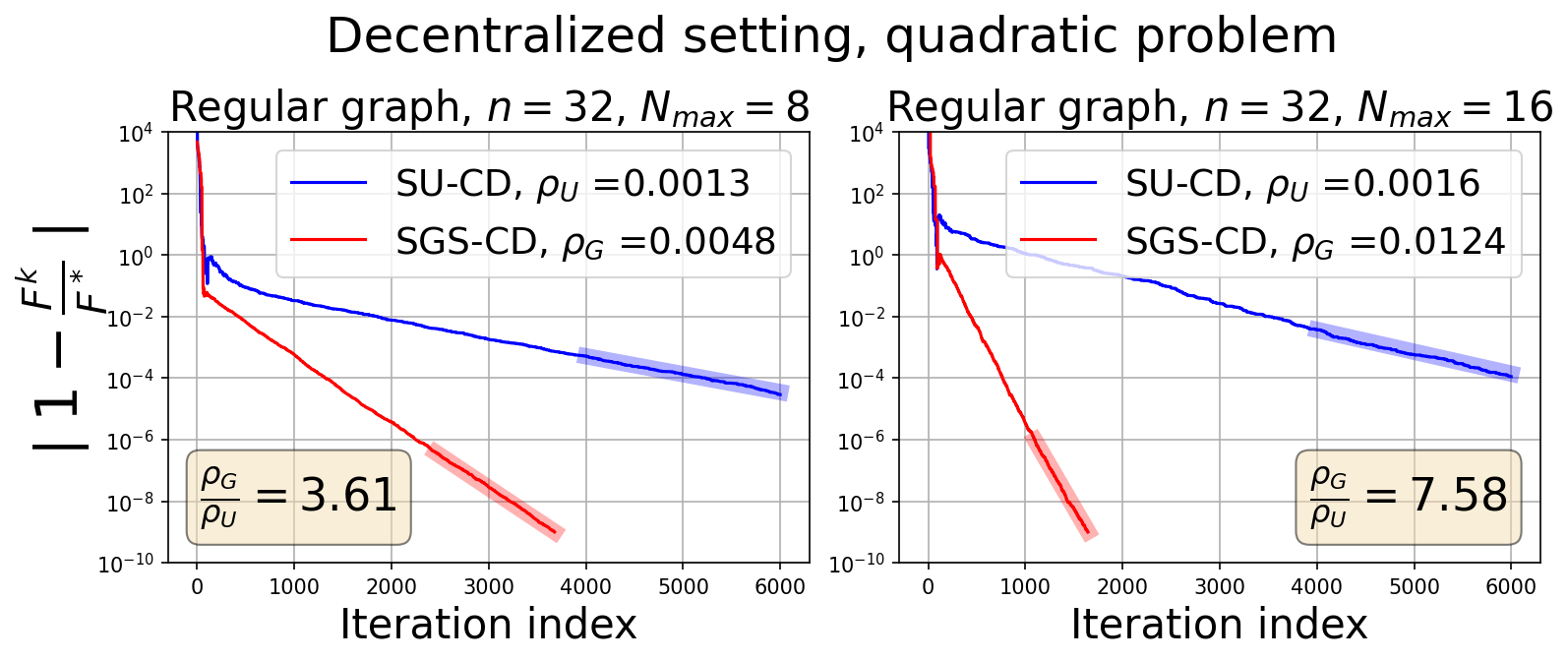}
\end{subfigure}
\begin{subfigure}{0.48\linewidth}
    \centering \includegraphics[trim={1mm 1mm 1mm 1mm},clip,width=\linewidth]{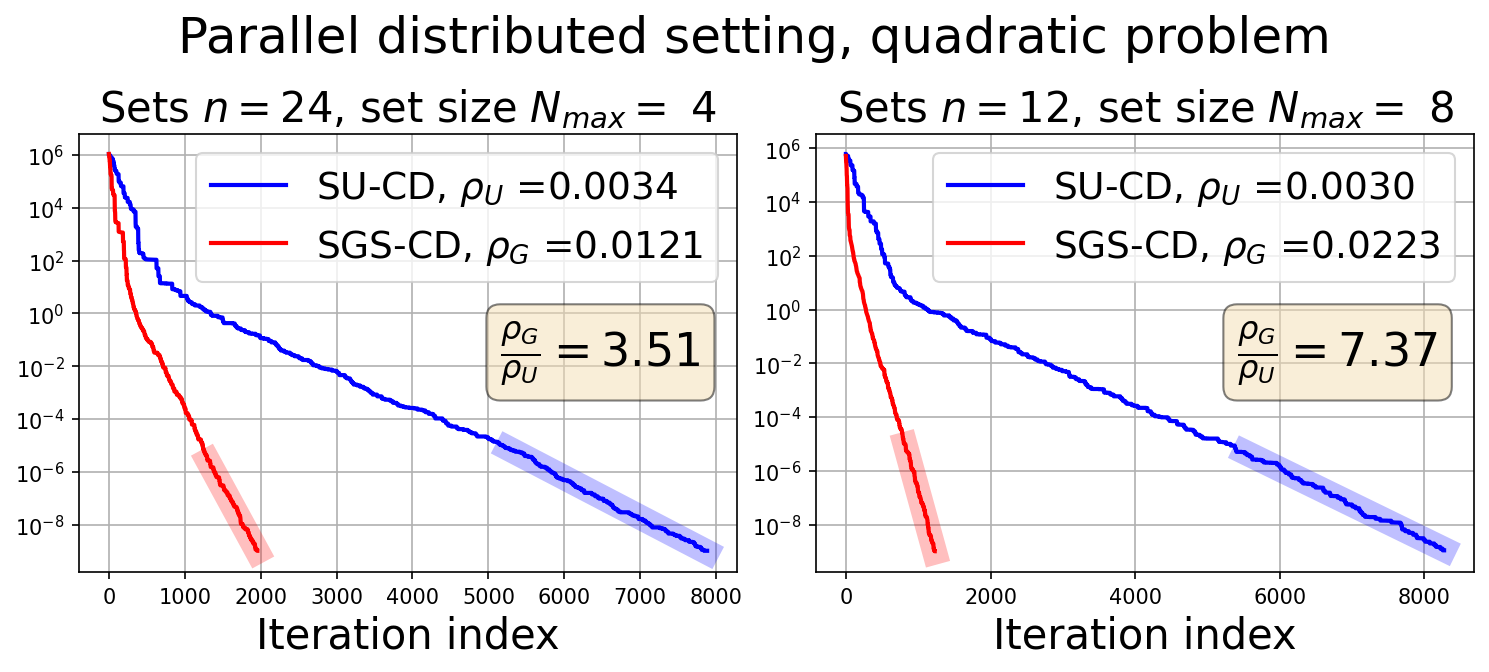}
\end{subfigure} 
\caption{Comparison of the convergence rates of SU-CD and SGS-CD for quadratic problems in two settings: decentralized optimization over a network (left plots), and parallel distributed computation with parameter server (right plots).}
\label{fig:decen_and_distrib}
\end{figure}

\subsection{SU-CD vs SGS-CD: speedup increase with $N_{\max}$}

Figure \ref{fig:decen_and_distrib} shows an example in the decentralized setting where the speedup of SGS-CD compared to SU-CD increases linearly with $N_{\max}$ (left plots), and an example in the parallel distributed setting where SGS-CD achieves the maximum speedup of $N_{\max}$ (right plots). 

For the decentralized setting, we created two regular graphs of $n=32$ nodes and degrees $N_{\max} = 8$ and 12, respectively. 
The local functions were $f_i(\theta) = \theta^T c I_d \theta$ with $d=5$, and the constant $c$ being much larger for one node than all others. This choice gave a few edges with smoothness constants much smaller than the rest, maximizing the chances to observe the advantages of SGS-CD versus SU-CD (see also the discussion in Section 4.1 of \cite{nutini2015coordinate}). 

For the parallel distributed setting, we created a problem that was separable per-coordinate, and we tried to recreate the conditions described in Section \ref{sec:parallel_distributed} to approximate the $N_{\max}$ gain. 
We chose $F(x) = x^T \diag(a_1,...,a_d) x$ with $d = 48$ and $a_i \sim \Ncal(10, 3) \; \forall i$. 
We then created $n$ sets of $N_{\max}$ coordinates such that each coordinate belonged to exactly two sets, and simulated two different distributions of the $d=48$ coordinates: one with $n=24$ sets of $N_{\max} = 4$ coordinates, and another with $n=12$ sets of $N_{\max} = 8$ coordinates. 
Following the reasoning in Section \ref{sec:parallel_distributed}, we set the initial value of $(N_{\max}-1)$ coordinates in each set to $x_m^0 = 1$ (close to the optimal value $x_m^*=0$), and the one remaining to $x_\ell^0 = 100$ (far away from $x_\ell^*=0$).

In all plots of Figure \ref{fig:decen_and_distrib} we used the portion of the curves highlighted with thicker lines to estimate the suboptimality reduction factor $(1-\rho)$, and called $\rho_U$ and $\rho_G$ the rates of SU-CD and SGS-CD, respectively. 
In all cases we see that $1 \leq \frac{\rho_G}{\rho_U} \leq N_{\max}$, as predicted by Theorem \ref{theo:rate_SGS-CD}. 
We additionally observe that this ratio increases approximately in the same proportion as $N_{\max}$ for the decentralized setting, and is approximately equal to $N_{\max}$ in the parallel distributed.

%
%

\subsection{Scaling with the number of nodes} \label{sec:scaling_with_num_nodes}

In addition to testing the effect of increasing $N_{\max}$, we repeated the experiment of the previous section increasing the number of nodes $n$ in the decentralized setting.
Figure \ref{fig:scaling_with_num_nodes} shows the results of this experiment for $N_{\max} = 8$ (we obtained analogous results for $N_{\max} = 16$). 
The figures confirm the conclusions of the previous section and illustrate how, although both algorithms get slower as the number of nodes increases (as could be expected), the speedup of SGS-CD versus SU-CD remains constant.

\begin{figure}[tb]
\centering
\includegraphics[width=0.6\linewidth]{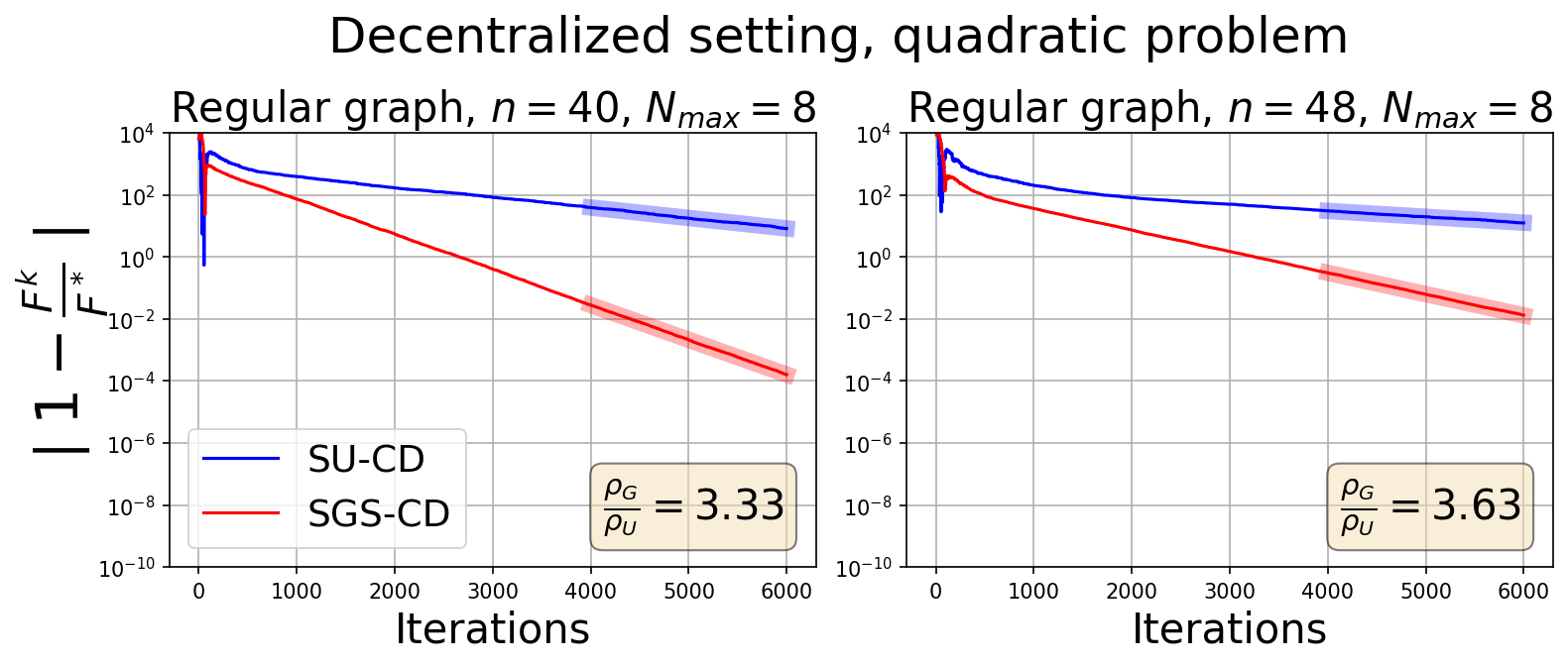}
\caption{Comparison of SU-CD and SGS-CD for increasing $n$.}
\label{fig:scaling_with_num_nodes}
\end{figure}

%
%

\subsection{Estimated vs exact Lipschitz constants} \label{sec:exact_vs_estimated_L}

As mentioned in Section \ref{ref:L_estimation}, whether the estimated instantaneous Lipschitz constants $\widehat{L}_i$ achieve a faster or slower per-iteration convergence of the exact global $L_i$ than the Lipschitz-informed algorithms depends heavily on the problem at hand. 
In this section, we compare SL-CD against SeL-CD in two different polynomial functions and show that the fastest algorithm is different in each case (Figure~\ref{fig:exact_vs_estim_L}).

We consider the parallel distributed setting of the last plot in Figure \ref{fig:decen_and_distrib}, where the parameter vector $x$ has 48 coordinates and 12 workers update 8 coordinates each, such that each coordinate is updated by exactly two workers. 
We compare the performance of SL-CD and SeL-CD in two functions: 
a quadratic 
$F(x) = a_1 x_1^2 + \ldots + a_d x_d^2 + 1$ and an order-four polynomial 
$F(x) = a_1 x_1^4 + \ldots + a_d x_d^4 + 1$. 
For the former, the global (and also the instantaneous) coordinate-wise Lipschitz constants are $\{L_i\} = \{2a_i\}$, while for the latter they are $\{L_i\} = \{12 a_i (\hat{x}_i)^2)\}$, where $\hat{x}_i$ is the maximum absolute value taken by the entry $x_i$ throughout the optimization.
The constants $a_i$ were set to random integers sampled at uniform in the interval [1,100]. 

Figure \ref{fig:exact_vs_estim_L} shows the performance of SL-CD and SeL-CD in the quadratic (left plot) and the order-four (right plot) problems. 
As expected, SL-CD converges faster than SeL-CD in the quadratic problem, where once a coordinate is selected for the first time it is set to its optimal value in that single iteration.
Note also that due to this behavior and the fact that the coordinate sampling probabilities of SL-CD are fixed (cf. eq. \eqref{eq:coord_samling_probs}), sampling a not-yet-optimized coordinate becomes more and more difficult as the iterations progress, which is why the convergence of SL-CD in the left plot of Figure \ref{fig:exact_vs_estim_L} shows a stairwise pattern where the length of the steps becomes larger with the iterations. 
The estimated constants $\widehat{L}_\ell$ approximate the optimal values $2a_i$ as well as possible, but cannot match them exactly and thus SeL-CD converges more slowly.

Conversely, in the order-four function SeL-CD converges faster than SL-CD, since in this case the estimated constants will always be closer to the true instantaneous coordinate-wise Lipschitz values than the global smoothness constants $L_\ell$. 
Note, however, that if the exact instantaneous values could be known at each iteration (which is the case of the quadratic function in the left plot of Figure \ref{fig:exact_vs_estim_L}), SL-CD using these values would always be faster than SeL-CD.

\begin{figure}[tb]
\centering
\includegraphics[width=0.6\linewidth]{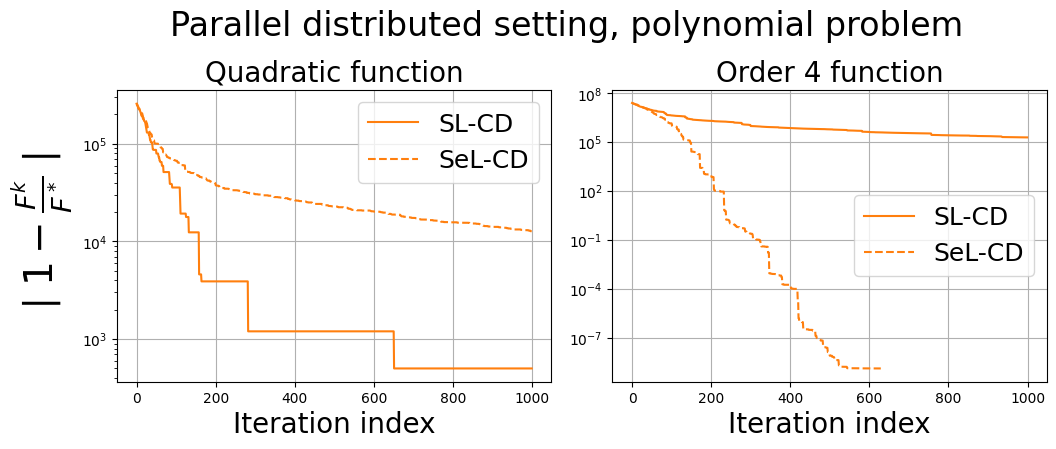}
\caption{Comparison of the performance of SL-CD (using global exact $L_i$ values) and SeL-CD in the parallel distributed setting. The experiment was designed to show that which of the two methods is faster depends on the problem considered.}
\label{fig:exact_vs_estim_L}
\end{figure}

%
%

\subsection{Number of iterations vs communication complexity} \label{sec:LLS}

Figure \ref{fig:LLS} shows the performance of all algorithms proposed for the linear least squares (LLS) problem
\[ f_i(\theta) = \frac{1}{M} \norm{X_i \theta - Y_i}_2^2, \; 
X_i \in \Rbb^{M \times d}, \; Y_i \in \Rbb^M, \]
in two random graphs of $n=32$ nodes and link probabilities of 0.1 (left plots) and 0.5 (right plots), respectively. 
The data was generated with the model of \cite{scaman2017optimal}, $d=5, M=30$, and the $Y$ values were additionally multiplied by the index of the corresponding node to have non-iid data between the nodes.
Here we show convergence both against number of iterations (top) and against number of vectors in $\Rbb^d$ transmitted through the network (bottom). 
Table~\ref{tab:comm_cost} shows the communication complexity of each algorithm in these terms.

\begin{table}[b]
\centering
\caption{Communication complexity of each algorithm: number of vectors in $\Rbb^d$ transmitted in one iteration for an arbitrary activated node $i$. 
Variable $I$ indicates the number of iterations inside the do-while loop in Algorithm \ref{alg:L_estimation}.}
\label{tab:comm_cost}
\begin{tabular}{||cc||cc||} \hline
SU-CD   & 2         & SGS-CD    & $N_i +1$ \\ \hline
SL-CD   & 2         & SGSL-CD   & $N_i +1$ \\ \hline
SeL-CD  & $2 + 2I$     & SGSeL-CD  & $N_i + 1 + 2I$ \\ \hline
\end{tabular}
\end{table}

In terms of the number of iterations, we confirm the conclusions of all our corollaries, namely
(i) SGS-CD converges faster than SU-CD (Corollary \ref{cor:SGS_faster_SU}), 
(ii) SL-CD converges faster than SU-CD (Corollary \ref{corol:SL_faster_SU}), and
(iii) SGSL-CD converges faster than both SL-CD and SGS-CD  (Corollary \ref{corol:SGSL_fastest}).
Whether the versions with estimated Lipschitz constants SeL-CD and SGSeL-CD are faster than their counterparts with exact Lipschitz knowledge SL-CD and SGSL-CD depends on the problem instance, as discussed in the previous section. 
Once again, the speedup of applying either the GS or the GSL rule increases radically as the graph becomes more connected. 


\begin{figure}[tb]
\centering
\includegraphics[trim={1mm 1mm 1mm 1mm},clip,width=0.6\linewidth]{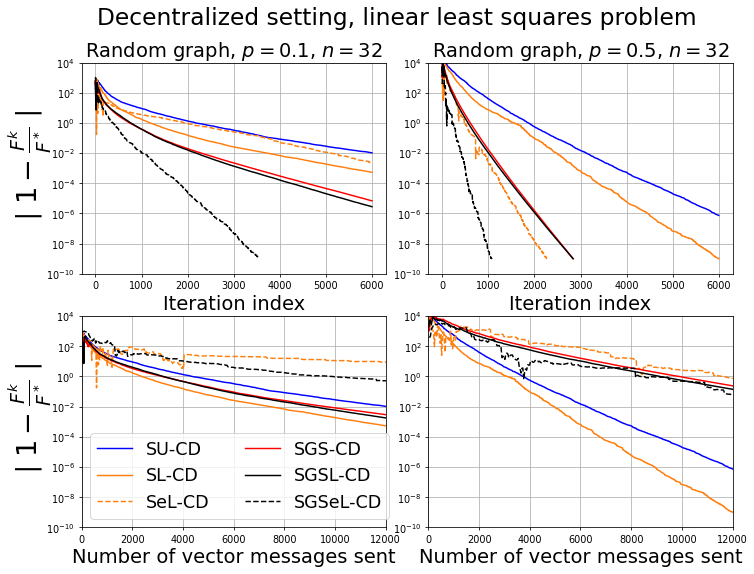}
\caption{Performance of the algorithms presented in an LLS problem and two random graphs with different numbers of edges (left and right columns). 
Top plots: convergence in terms of the number of iterations.
Bottom plots: convergence in terms of the number of vectors in $\Rbb^d$ transmitted.}
\label{fig:LLS}
\end{figure}


Although the plots against number of iterations allow us to confirm the predictions of our theory, they mask the important fact that the algorithms with highest rates have also larger communication complexity.
The plots in terms of the number of vectors transmitted illustrate better this trade-off. 
In the bottom plots we see that the suboptimality reduction per number of vectors transmitted can be comparable between the algorithms with randomized neighbor choice and those applying the GS(L) rule (left), and the former can even be more efficient as the network becomes denser (right). 
The greater overhead of the algorithms using estimated constants also becomes apparent in these plots. 
However, while the plots in Figure \ref{fig:LLS} support our theory and illustrate the communication overhead of the different algorithms, they do not reflect what we would expect to observe in a real system, where updates can happen in parallel. 
We analyze this in the next section.

%
%

\subsection{Asynchronism and communication overhead} \label{sec:asynchrony_realistic}

In this section, we analyze how the characteristics of a realistic setting, with asynchronous node activations and communication overhead, can affect the relative performance of the algorithms. 
For clarity, we limited our analysis to SU-CD and SGS-CD;
Table \ref{tab:comm_cost} and Figure \ref{fig:LLS} allow to extrapolate the conclusions here to the rest of the algorithms presented. 

In Figure \ref{fig:asynchrony_realistic} we repeat the simulations of Figure \ref{fig:decen_and_distrib} for the decentralized setting and graph degree $N_{\max} = 16$, now plotting against time (where units are arbitrary). 
We consider that nodes activate following an exponential distribution with average activation rate $\kappa$ and that communication takes time $\tau$, during which the transmitting nodes are blocked and cannot either start or participate in other updates. 
We consider $\kappa \in \{10, 5\}$ and $\tau \in \{0, 1\}$.
We also consider that the activation rates can be equal to $\kappa$ for all nodes (top two plots), or can have a skewed distribution with expected value $\kappa$ (bottom plot). 
This skewed distribution (we chose a Zipf with parameter 2) intends to model the presence of slow nodes. 

The top plot in Figure \ref{fig:asynchrony_realistic} shows that if communication time is negligible ($\tau=0$), we recover the convergence curves of Figure \ref{fig:decen_and_distrib}, independently of the parameter $\kappa$.
There is only a shrinkage of the time axis as we go from $\kappa=10$ to 5 due to updates occurring faster.

The differences show up when communication is not instant anymore ($\tau=1$, middle plot). 
Here the gains of SGS-CD versus SU-CD are smaller than in the top plot, since now in \mbox{SGS-CD} nodes have to wait for \emph{all} their neighbors to be available. 
When we increase the activation rates from $\kappa=10$ to 5, the gap between the algorithms reduces further, since SU-CD can complete updates more quickly but SGS-CD suffers of greater overhead.

The bottom plot shows that having skewed node activations exacerbates this effect: most nodes have faster rates than $\kappa$ and this speeds up the convergence of SU-CD, but \mbox{SGS-CD} is constrained by its overhead. 
We remark, however, that both algorithms converge faster than a synchronous algorithm would, since in the synchronous setting the updates would be completed at the rate of the slowest node. 

\begin{figure}[tb]
\centering
\begin{subfigure}{0.6\linewidth}
    \centering \includegraphics[trim={1mm 1mm 1mm 1mm},clip,width=\linewidth]{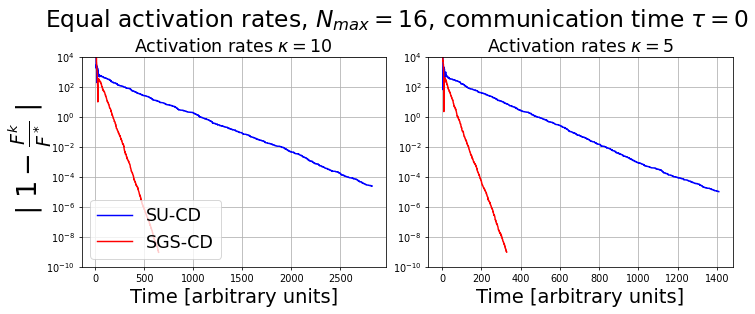}
\end{subfigure}
    \\ [0.3em]
\begin{subfigure}{0.6\linewidth}
    \centering \includegraphics[trim={1mm 1mm 1mm 1mm},clip,width=\linewidth]{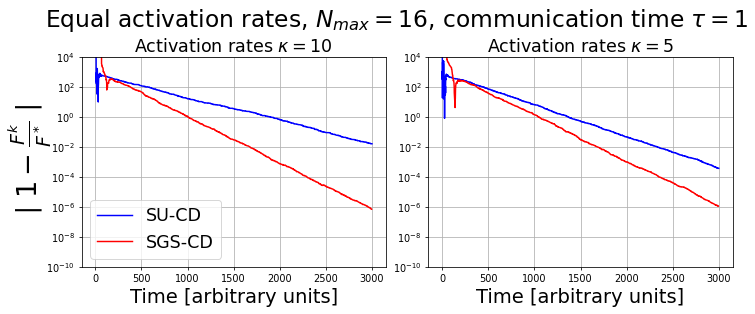}
\end{subfigure}
    \\ [0.3em]
\begin{subfigure}{0.6\linewidth}
    \centering \includegraphics[trim={1mm 1mm 1mm 1mm},clip,width=\linewidth]{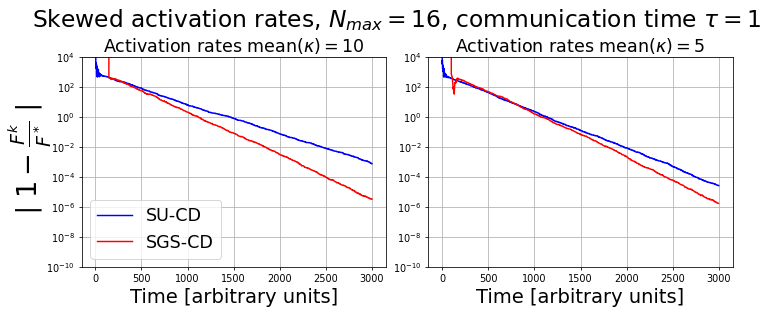}
\end{subfigure}
\caption{Comparison of SU-CD vs. SGS-CD in a realistic setting where (i) nodes activation intervals are exponentially distributed with average parameter $\kappa$, and (ii) communication takes time $\tau$. We consider equal and skewed activation rates.}
\label{fig:asynchrony_realistic}
\end{figure}

%
%

\subsection{Comparison against token algorithms} \label{sec:comparison_with_token}

In this section we compare the performance of SU-CD and SL-CD against that of token algorithms \cite{mao2020walkman, hendrikx2023principled}, which have the same edge asynchronicity model of the algorithms considered in this paper and share similar computation and communication complexities. 

Figure \ref{fig:comparison_token_algorithms} shows the performance of the algorithms in terms of number of iterations (left) and in the realistic setting described in Section \ref{sec:asynchrony_realistic} (right) with parameters $\kappa=10$ and $\tau=1$ for the quadratic problem in a regular graph of $n=32$ and degree $N_{\max}=8$. 
For Walkman \cite{mao2020walkman}, we set their stepsize $\beta$ to the maximum allowed by their theory. 
For the multi-token algorithms we simulate $K \in \{1, 5, 10, 15\}$ tokens and perform one computation followed by one communication (among the many variants that the authors propose in their paper). 
Remark that since Walkman is a primal algorithm, here we plot primal suboptimality.

\begin{figure}[tb]
\centering
\includegraphics[trim={0cm 0cm 12.5cm 1.2cm}, clip, width=0.6\linewidth]{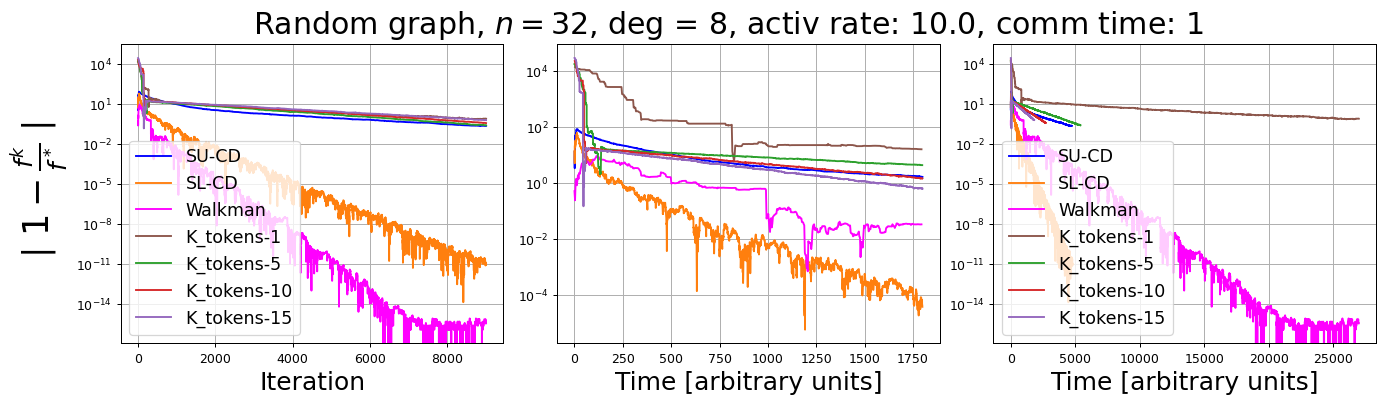}
\caption{SU-CD and SL-CD against token algorithms.}
\label{fig:comparison_token_algorithms}
\end{figure}

Walkman is the algorithm that makes the laregest progress per iteration (left). 
Note that in terms of iterations the multi-token algorithms perform equally, since it does not matter what token makes the update.
The advantages of the asynchronicity and multiple updates of our algorithms and the multi-token are seen in the right plot, for the realistic setting, where many updates can happen in parallel. 
Here SL-CD is the fastest of all thanks to allowing many updates in parallel, in contrast to Walkman whose updates are sequential. 
While SU-CD also allows parallel updates, its convergence is slower due to the use of the same stepsize for all dual coordinates. 
Still, it performs similarly to the multi-token algorithm with $K=10$.
We remark that the authors of \cite{hendrikx2023principled} provided variants of their algorithms with local updates and acceleration that would make the convergence of multi-token faster, but would not be a fair comparison in terms of communication and computation.

%
%

\subsection{A dual-unfriendly problem with no $L$ knowledge} \label{sec:LR}

Figure \ref{fig:LR} shows the convergence of SeL-CD and SGSeL-CD in the logistic regression problem 
\[ f_i(\theta) = \frac{1}{M} \sum_{j=1}^M \log \big( 1 + \exp (-[Y_i]_j \cdot ([X_i]_j)^T \theta_i ) \big) + c \norm{\theta_i}_2^2 \]

where $[Y_i]_j$ and $([X_i]_j)^T$ are the $j$-th component and the \mbox{$j$-th} row of arrays $Y_i \in \Rbb^M$ and $X_i \in \Rbb^{M \times d}$, respectively. 
We ran the simulation for the same graphs and parameters used for the experiments in Section \ref{sec:LLS}.
In this case, we cannot compute analytically the optimal value of \eqref{eq:per-node_Lagrangian}, so we did the optimization in \eqref{eq:per-node_Lagrangian} using the SciPy module. 
For the same reason, we do not know the true coordinate Lipschitz values $L_\ell$, so we test only the algorithms using estimated constants. 

As in the previous examples, both algorithms converge linearly, and SGSeL-CD is faster than SeL-CD. 
We may remark, however, that the gap between the two algorithms does not increase with the graph connectivity, as observed between \mbox{SL-CD} and SGSL-CD in the top plots of Figure \ref{fig:LLS}. 
Indeed, we observed that when the $L_\ell$ are estimated, the gap between SGSeL-CD and SeL-CD may or may not increase with the connectivity of the graph. 
We attribute this effect to the fact that the performance of the algorithms depends very much on how close to optimal the estimated Lipschitz constants are, and therefore, some instances that allow for a better fit of the true constants using Algorithm \ref{alg:L_estimation} have advantage over others whose true constants cannot be well approximated.
In the next section we discuss how to improve the estimation of Algorithm \ref{alg:L_estimation}, and the associated costs of this improvement.

\begin{figure}[h]
\centering
\includegraphics[width=0.6\linewidth]{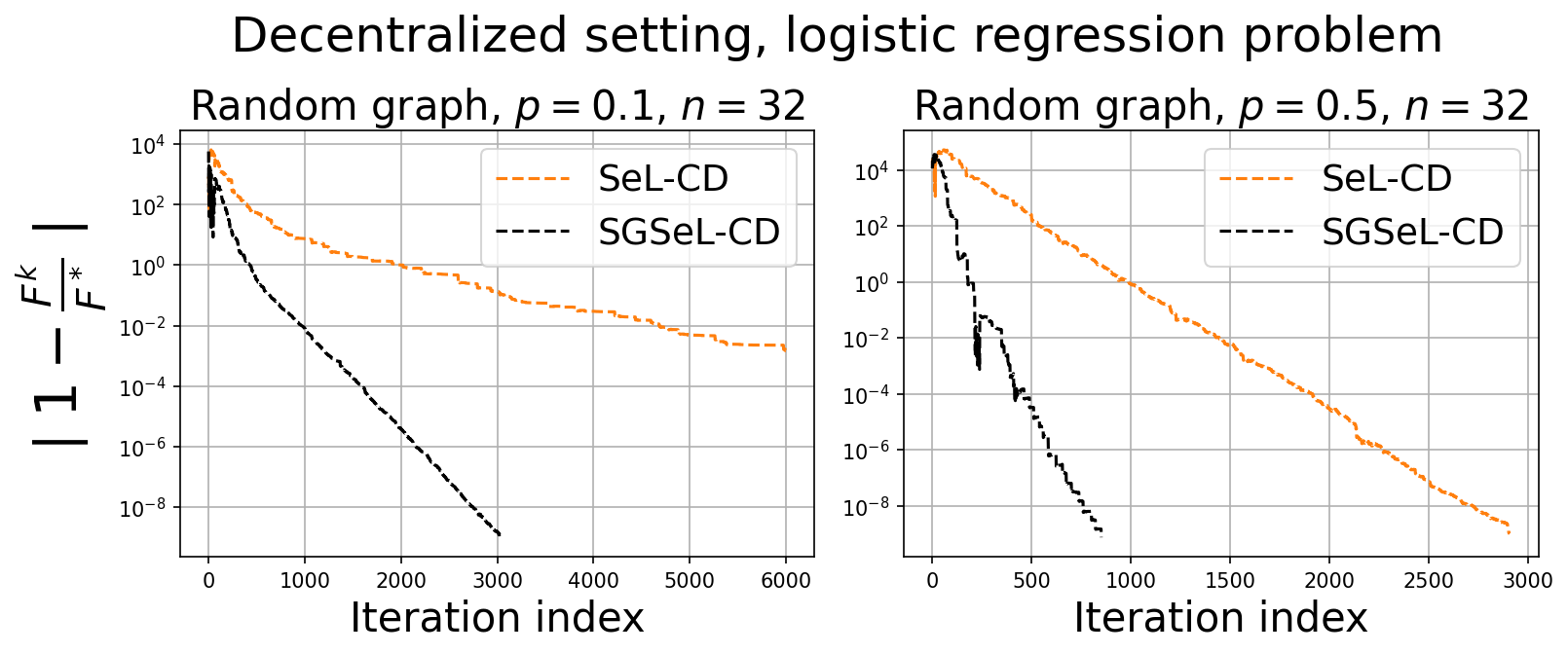}
\caption{Convergence of SeL-CD and SGSeL-CD in a logistic regression problem.}
\label{fig:LR}
\end{figure}

%
%

\section{Discussion and Conclusion} \label{sec:conclusion}

We have presented the class of \emph{setwise CD} optimization algorithms, where in a multi-agent system workers are allowed to modify only a subset of the total number of coordinates at each iteration. 
These algorithms are suitable for (dual) asynchronous decentralized optimization and (primal) distributed parallel optimization. 

We studied the convergence of a number of setwise CD variants: random uniform and Gauss-Southwell setwise coordinate selection (SU-CD and SGS-CD), and their Lipschitz-informed versions (SL-CD and SGSL-CD).
We showed linear convergence for all variants for smooth and strongly convex functions $f_i$, which required developing a new methodology that extends previous results on CD methods. 

In particular, we proved that in expectation, when convergence is measured \textit{in terms of number of iterations}, both SGS-CD and SL-CD are faster than SU-CD, and SGSL-CD is the fastest of them all. 
However, the algorithms applying the GS(L) rule entail higher communication, which can potentially lower the rate at which iterations are completed and give milder speedups in a real system. 
The same holds for the algorithms using estimated Lipschitz constants, which require transmitting even more vectors per iteration.
Nevertheless, our simulations for a realistic setting with asynchronous activations show that the gains of the GS rule can still be significant with respect to the uniform neighbor choice.

An interesting question is whether the speedup derived for SGS-CD with respect to SU-CD can be obtained with more lenient assumptions than smoothness and strong convexity of the local objectives. 
Since we solve the problem in the dual, the smoothness assumption may be removed (leading to a just-convex dual) using the methodology of \cite{stich2017approximate}. 
Removing the strong convexity assumption would lead to a non-smooth dual, and we are not aware of any results that provide the speedup of the GS rule without the smoothness assumption.
Filling this gap in the literature would constitute an important contribution to the current knowledge of CD algorithms and may be explored in future work.

%
%

\section*{Acknowledgments}

M.C. acknowledges funding from the European Union's H2020 MonB5G (grant no. 871780) project.
P.M. is also a member of Archimedes/Athena RC—NKUA, and was partially supported by the PEPR IA FOUNDRY project (ANR-23-PEIA-0003) of the French National Research Agency (ANR), and project MIS 5154714 of the National Recovery and Resilience Plan Greece 2.0 funded by the European Union under the NextGenerationEU Program.
T.S. acknowledges funding from the Hellenic Foundation for Research \& Innovation (HFRI) project 8017: ``AI4RecNets: Artificial Intelligence
(AI) Driven Co-design of Recommendation and Networking Algorithms".

\bibliographystyle{ieeetr}
\bibliography{bibliography}

\begin{thebibliography}{10}

\bibitem{lian2017can}
X.~Lian, C.~Zhang, H.~Zhang, C.-J. Hsieh, W.~Zhang, and J.~Liu, ``Can decentralized algorithms outperform centralized algorithms? a case study for decentralized parallel stochastic gradient descent,'' {\em Advances in Neural Information Processing Systems}, vol.~30, 2017.

\bibitem{wan2009event}
P.~Wan and M.~D. Lemmon, ``Event-triggered distributed optimization in sensor networks,'' in {\em 2009 International Conference on Information Processing in Sensor Networks}, pp.~49--60, IEEE, 2009.

\bibitem{alrowaily2018secure}
M.~Alrowaily and Z.~Lu, ``Secure edge computing in {IoT} systems: review and case studies,'' in {\em 2018 IEEE/ACM Symposium on Edge Computing (SEC)}, pp.~440--444, IEEE, 2018.

\bibitem{warnat2021swarm}
S.~Warnat-Herresthal, H.~Schultze, K.~L. Shastry, S.~Manamohan, S.~Mukherjee, V.~Garg, R.~Sarveswara, K.~H{\"a}ndler, P.~Pickkers, N.~A. Aziz, {\em et~al.}, ``Swarm learning for decentralized and confidential clinical machine learning,'' {\em Nature}, vol.~594, no.~7862, pp.~265--270, 2021.

\bibitem{nedic2007rate}
A.~Nedic and A.~Ozdaglar, ``On the rate of convergence of distributed subgradient methods for multi-agent optimization,'' in {\em 2007 46th IEEE Conference on Decision and Control}, pp.~4711--4716, IEEE, 2007.

\bibitem{neglia2020decentralized}
G.~Neglia, C.~Xu, D.~Towsley, and G.~Calbi, ``Decentralized gradient methods: does topology matter?,'' in {\em International Conference on Artificial Intelligence and Statistics}, pp.~2348--2358, PMLR, 2020.

\bibitem{jakovetic2018convergence}
D.~Jakovetic, D.~Bajovic, A.~K. Sahu, and S.~Kar, ``Convergence rates for distributed stochastic optimization over random networks,'' in {\em 2018 IEEE Conference on Decision and Control (CDC)}, pp.~4238--4245, IEEE, 2018.

\bibitem{lian2018asynchronous}
X.~Lian, W.~Zhang, C.~Zhang, and J.~Liu, ``Asynchronous decentralized parallel stochastic gradient descent,'' in {\em International Conference on Machine Learning}, pp.~3043--3052, PMLR, 2018.

\bibitem{yuan2016convergence}
K.~Yuan, Q.~Ling, and W.~Yin, ``On the convergence of decentralized gradient descent,'' {\em SIAM Journal on Optimization}, vol.~26, no.~3, pp.~1835--1854, 2016.

\bibitem{shi2015extra}
W.~Shi, Q.~Ling, G.~Wu, and W.~Yin, ``{EXTRA}: An exact first-order algorithm for decentralized consensus optimization,'' {\em SIAM Journal on Optimization}, vol.~25, no.~2, pp.~944--966, 2015.

\bibitem{scaman2017optimal}
K.~Scaman, F.~Bach, S.~Bubeck, Y.~T. Lee, and L.~Massouli{\'e}, ``Optimal algorithms for smooth and strongly convex distributed optimization in networks,'' in {\em international conference on machine learning}, pp.~3027--3036, PMLR, 2017.

\bibitem{hendrikx2019accelerated}
H.~Hendrikx, F.~Bach, and L.~Massouli{\'e}, ``Accelerated decentralized optimization with local updates for smooth and strongly convex objectives,'' in {\em The 22nd International Conference on Artificial Intelligence and Statistics}, pp.~897--906, PMLR, 2019.

\bibitem{uribe2020dual}
C.~A. Uribe, S.~Lee, A.~Gasnikov, and A.~Nedi{\'c}, ``A dual approach for optimal algorithms in distributed optimization over networks,'' in {\em 2020 Information Theory and Applications Workshop (ITA)}, pp.~1--37, IEEE, 2020.

\bibitem{iutzeler2013asynchronous}
F.~Iutzeler, P.~Bianchi, P.~Ciblat, and W.~Hachem, ``Asynchronous distributed optimization using a randomized alternating direction method of multipliers,'' in {\em 52nd IEEE Conference on Decision and Control (CDC)}, pp.~3671--3676, IEEE, 2013.

\bibitem{wei2013convergence}
E.~Wei and A.~Ozdaglar, ``On the {$O(1/k)$} convergence of asynchronous distributed alternating direction method of multipliers,'' in {\em 2013 IEEE Global Conference on Signal and Information Processing}, pp.~551--554, IEEE, 2013.

\bibitem{xu2017convergence}
J.~Xu, S.~Zhu, Y.~C. Soh, and L.~Xie, ``Convergence of asynchronous distributed gradient methods over stochastic networks,'' {\em IEEE Transactions on Automatic Control}, vol.~63, no.~2, pp.~434--448, 2017.

\bibitem{srivastava2011distributed}
K.~Srivastava and A.~Nedic, ``Distributed asynchronous constrained stochastic optimization,'' {\em IEEE Journal of Selected Topics in Signal Processing}, vol.~5, no.~4, pp.~772--790, 2011.

\bibitem{ram2009asynchronous}
S.~S. Ram, A.~Nedi{\'c}, and V.~V. Veeravalli, ``Asynchronous gossip algorithms for stochastic optimization,'' in {\em Proceedings of the 48h IEEE Conference on Decision and Control (CDC) held jointly with 2009 28th Chinese Control Conference}, pp.~3581--3586, IEEE, 2009.

\bibitem{costantini2022pick}
M.~Costantini, N.~Liakopoulos, P.~Mertikopoulos, and T.~Spyropoulos, ``Pick your neighbor: Local gauss-southwell rule for fast asynchronous decentralized optimization,'' in {\em 2022 IEEE 61st Conference on Decision and Control (CDC)}, pp.~1602--1609, IEEE, 2022.

\bibitem{nesterov2012efficiency}
Y.~Nesterov, ``Efficiency of coordinate descent methods on huge-scale optimization problems,'' {\em SIAM Journal on Optimization}, vol.~22, no.~2, pp.~341--362, 2012.

\bibitem{nutini2015coordinate}
J.~Nutini, M.~Schmidt, I.~Laradji, M.~Friedlander, and H.~Koepke, ``Coordinate descent converges faster with the {G}auss-{S}outhwell rule than random selection,'' in {\em International Conference on Machine Learning}, pp.~1632--1641, PMLR, 2015.

\bibitem{nutini2017let}
J.~Nutini, I.~Laradji, and M.~Schmidt, ``Let's make block coordinate descent go fast: Faster greedy rules, message-passing, active-set complexity, and superlinear convergence,'' {\em arXiv preprint arXiv:1712.08859}, 2017.

\bibitem{tsitsiklis1986distributed}
J.~Tsitsiklis, D.~Bertsekas, and M.~Athans, ``Distributed asynchronous deterministic and stochastic gradient optimization algorithms,'' {\em IEEE Transactions on Automatic Control}, vol.~31, no.~9, pp.~803--812, 1986.

\bibitem{peng2016arock}
Z.~Peng, Y.~Xu, M.~Yan, and W.~Yin, ``Arock: an algorithmic framework for asynchronous parallel coordinate updates,'' {\em SIAM Journal on Scientific Computing}, vol.~38, no.~5, pp.~A2851--A2879, 2016.

\bibitem{xiao2019dscovr}
L.~Xiao, A.~W. Yu, Q.~Lin, and W.~Chen, ``{DSCOVR}: Randomized primal-dual block coordinate algorithms for asynchronous distributed optimization,'' {\em The Journal of Machine Learning Research}, vol.~20, no.~1, pp.~1634--1691, 2019.

\bibitem{bastianello2020asynchronous}
N.~Bastianello, R.~Carli, L.~Schenato, and M.~Todescato, ``Asynchronous distributed optimization over lossy networks via relaxed {ADMM}: Stability and linear convergence,'' {\em IEEE Transactions on Automatic Control}, vol.~66, no.~6, pp.~2620--2635, 2020.

\bibitem{tian2020achieving}
Y.~Tian, Y.~Sun, and G.~Scutari, ``Achieving linear convergence in distributed asynchronous multiagent optimization,'' {\em IEEE Transactions on Automatic Control}, vol.~65, no.~12, pp.~5264--5279, 2020.

\bibitem{pu2020push}
S.~Pu, W.~Shi, J.~Xu, and A.~Nedi{\'c}, ``Push--pull gradient methods for distributed optimization in networks,'' {\em IEEE Transactions on Automatic Control}, vol.~66, no.~1, pp.~1--16, 2020.

\bibitem{mao2020walkman}
X.~Mao, K.~Yuan, Y.~Hu, Y.~Gu, A.~H. Sayed, and W.~Yin, ``Walkman: A communication-efficient random-walk algorithm for decentralized optimization,'' {\em IEEE Transactions on Signal Processing}, vol.~68, pp.~2513--2528, 2020.

\bibitem{hendrikx2023principled}
H.~Hendrikx, ``A principled framework for the design and analysis of token algorithms,'' in {\em International Conference on Artificial Intelligence and Statistics}, pp.~470--489, PMLR, 2023.

\bibitem{verma2023maximal}
A.~Verma, M.~M. Vasconcelos, U.~Mitra, and B.~Touri, ``Maximal dissent: a state-dependent way to agree in distributed convex optimization,'' {\em IEEE Transactions on Control of Network Systems}, 2023.

\bibitem{notarnicola2016asynchronous}
I.~Notarnicola and G.~Notarstefano, ``Asynchronous distributed optimization via randomized dual proximal gradient,'' {\em IEEE Transactions on Automatic Control}, vol.~62, no.~5, pp.~2095--2106, 2016.

\bibitem{you2016asynchronous}
Y.~You, X.~Lian, J.~Liu, H.-F. Yu, I.~S. Dhillon, J.~Demmel, and C.-J. Hsieh, ``Asynchronous parallel greedy coordinate descent,'' {\em Advances in Neural Information Processing Systems}, vol.~29, 2016.

\bibitem{boyd2004convex}
S.~Boyd and L.~Vandenberghe, {\em Convex optimization}.
\newblock Cambridge university press, 2004.

\bibitem{wright2015coordinate}
S.~J. Wright, ``Coordinate descent algorithms,'' {\em Mathematical Programming}, vol.~151, no.~1, pp.~3--34, 2015.

\bibitem{boyd2011distributed}
S.~Boyd, N.~Parikh, E.~Chu, B.~Peleato, J.~Eckstein, {\em et~al.}, ``Distributed optimization and statistical learning via the alternating direction method of multipliers,'' {\em Foundations and Trends{\textregistered} in Machine learning}, vol.~3, no.~1, pp.~1--122, 2011.

\bibitem{rockafellar2009variational}
R.~T. Rockafellar and R.~J.-B. Wets, {\em Variational analysis}, vol.~317.
\newblock Springer Science \& Business Media, 2009.

\bibitem{stich2017approximate}
S.~U. Stich, A.~Raj, and M.~Jaggi, ``Approximate steepest coordinate descent,'' in {\em International Conference on Machine Learning}, pp.~3251--3259, PMLR, 2017.

\end{thebibliography}

\end{document}